\documentclass[10pt]{amsart}
\usepackage[margin=1.2in]{geometry}
\usepackage[notcite,notref]{showkeys}
\usepackage{color}
\newtheorem{theorem}{Theorem}[section]
\newtheorem{lemma}[theorem]{Lemma}
\newtheorem{proposition}{Proposition}[section]
\newtheorem{corollary}{Corollary}[section]
\theoremstyle{definition}
\newtheorem{definition}[theorem]{Definition}

\theoremstyle{remark}
\newtheorem{remark}[theorem]{Remark}

\numberwithin{equation}{section}

%    Absolute value notation

%    Blank box placeholder for figures (to avoid requiring any
%    particular graphics capabilities for printing this document).

\usepackage{amsmath,amsfonts,amssymb,amsthm}
\usepackage{mathtools}
\usepackage{commath}

\begin{document}
	\title{Stationary BGK models for chemically reacting gas in a slab}
\author{Doheon Kim}
\address{School of Mathematics, Korea Institutefor Advanced Study, Seoul 02455, Korea (Republic of)}
\email{doheonkim@kias.re.kr}
\author{Myeong-Su Lee }
\address{Department of Mathematics, Sungkyunkwan University, Suwon 440-746, Korea (Republic of)}
\email{cmsl3573@skku.edu}
\author{SEOK-BAE YUN }
\address{Department of Mathematics, Sungkyunkwan University, Suwon 440-746, Korea (Republic of)}
\email{sbyun01@skku.edu}
	\begin{abstract}
		We study the boundary value problem of two stationary BGK-type models - 
the  BGK model for fast chemical reaction and the BGK model for slow chemical reaction - 
and provide a unified argument to establish the existence and uniqueness of stationary flows of reactive BGK models in a slab.
For both models, the main diffculty arises in the uniform control of the reactive parameters from above
and below, since, unlike the BGK models for non-reactive gases, the reactive parameters for the reactive BGK models are  defined through highly nonlinear relations. 
To overcome this difficulty, we introduce several nonlinear functionals that capture essential
structures of such nonlinear relations such as the monotonicity in specific variables,
that enable one to derive necessary estimates for the reactive equilibrium coefficients.
   \end{abstract}
	\maketitle
	%\tableofcontents
	\section{introduction}
	The classical BGK model \cite{BGK} describes the relaxation process of the Boltzmann equation in a simpler setting.
	Due to its reliable performances in reproducing qualitative features of the Boltzmann equation in a numerically amenable way, the BGK model has
	been popularly used in place of the Boltzmann equation in many fields of rarefied gas dynamics. 
	As a model equation of the Boltzmann equation, the BGK model also inherits various modeling assumptions of the Boltzmann equation: The gas molecules are assumed to be
	non-ionized, monatomic, elastic and non-reactive. Efforts to remove any of these assumptions usally involve
	much complications and difficulties. And for each such removal of the assumptions, relevant BGK models were proposed. Regarding the removal of the non-reactiveness assumption,
	which is practically very important since the chemical reaction of gases shows up in various physical situations such as combustion processes, hypersonic flows around space vehicles, many efforts have been made in a series of works.
	
	The first relaxation type model for the system of reacting gases was suggested by Monaco and Pandolfi in \cite{MB}, providing the relaxational approximation of the reactive Boltzmann equation of Rossani and Spiga \cite{RS}. 
	The consistent BGK model for mixture problem derived in \cite{AAP} was extended  in \cite{GS} for gas system undergoing slow chemical reactions, existence of which is studied in the current work. 
	Brull derived a reactive BGK model for which the relaxation operator is split into the elasitic part and the chemical reaction part  in \cite{MB}. Extension to polyatomic reacting gas can be found in \cite{BisiS}, and the relaxational model for irreversible reactive chemical transformation is considered in \cite{BCMR}. For the study of shock problems of reactive gases using BGK type models, see \cite{GST}.
	%These literature can roughly be divided into diverse categories according to the criterion
	%Reversible chemical process, irreversible chemical process, fast reaction and slow reaction, polyatomic and monatomic to name a few.
	
	To the best knowledge of authors, the existence issue of any of such reactive BGK models has never been
	considered in the literature, which is the main motivation of the current work. In this paper, we study the stationary problems for reactive BGK models involving the bimolecular fast reaction or slow chemical reactions. More precisely, we consider the stationary problems in a slab for reactive BGK models proposed in \cite{GS} (slow reaction) and \cite{GRS} (fast reaction). The term ``slow reaction" and ``fast reaction" are coined to compare the time scale of the chemical reactions measured up againt the time scale of elastic collisions. Slow reaction denotes the case in which the chemical reaction occurs over a time scale longer than the elatic collisions, and the fast reaction denotes the opposite case. \newline

	%In this paper, we are interested in the boundary value problem of two stationary BGK-type models for chemically reacting gas mixture. 

	%Other parameters will be defined the next chapter. To properly define the reactive equilibirum $M_i$ and $\mathcal{M}_i$, we need to define macrospic fiels and the corresponding macroscopic parameters, 
	%that will be given in the following section.

	The paper is organized as follows: In Section 2, we introduce the reactive BGK models we are studying in this paper. In Section 3, we present our main result. Section 4 is devoted to estimate the macroscopic parameters. In Section 5, we define our solution spaces and formulate our problem as a fixed point problem in the solution space.
In Section 6, we show that our solution maps are invariant in the solution space under the assumption of Theorem \ref{main} and Theorem \ref{main2}.
In Section 7, we finish the proof by establishing Cauchy estimates for the solution maps.
 
%%%%%%%%%%%%%%%%%%%%%%%%%%%%%%%%%%%%%%%%%%%%%%%%%%%%%%%%%%%%%%%%%%%%%%%%%%%%%%%%%%%%%%%%%%%%%%%%%%%%%%%
	\section{BGK models for chemically reacting gases}
	In this section, we introduce two reactive BGK models we are considering in this work, for slow and fast chemical reaction respectively.
	We first define various coefficients and quantities shared by both models, and set up notational conventions:\newline
	
	\noindent$\bullet$ {\bf Velocity distribution function:} In the following, the velocity distribution function $f_i(x,v)$ $(i=1,2,3,4)$  represents the number density of $i$th molecule at the position $x\in [0,1]$ with velocity $v=(v_1,v_2,v_3)\in\mathbb{R}^3$. \newline
	
	\noindent$\bullet$ {\bf Physical and chemical constants:} 
	
	\noindent$(a)$  $\tau$ is the Knudsen number defined by the ratio of the mean free path and the characteristic length of the system. It measures how rarefied the system is.\\
	$(b)$ Mass: $m_i$ represents the mass for each species and $M$ denotes the total mass involved in the reaction process: $M=m_1+m_2=m_3+m_4$. And we use $\mu_{ij}:=\frac{m_im_j}{m_i+m_j}$ $(i,j=1,2,3,4)$ for the reduced mass.\\
	$(c)$ Energy: $E_i$ denotes the energy of the chemical bond and $\Delta E=-\sum_{i=1}^{4}\lambda_iE_i$ is the energy threshold, where we denote $\lambda_1=\lambda_2=-\lambda_3=-\lambda_4=1.$\\
	$(d)$ Interactions: $\chi_{ij}$ denotes the interaction coefficient, and the microscopic collision frequencies are denoted by $\nu_{ij}$. 
	$\chi_{ij}$ and $\nu_{ij}$ must satisfy the relation: $\chi_{ij}\leq\nu_{ij}$ (See \cite{AAP}) (We mention that this condition is necessary in the proof of Lemma \ref{lb}). We use $\nu_{12}^{34}$ and $\nu^{12}_{34}$ to denote the chemical microscopic collision frequency for the first and second model is respectively. See (\ref{nu slow}).\newline
	
	\noindent$\bullet$ {\bf Macroscopic fields:} We now define the macroscopic fields to construct the reactive equilibrium for the first model.\newline
	
	\noindent$(a)$ Single component macroscopic fields:
	\begin{align}\label{single}
	\begin{split}
	\rho^{(i)}&:= m_i n^{(i)}:=m_i\int_{\mathbb{R}^3}f_idv,\\
	\rho^{(i)}U^{(i)}&:=m_i\int_{\mathbb{R}^3}v  f_i dv,\\
	3k\rho^{(i)} T^{(i)}&:= m_i^2\int_{\mathbb{R}^3}|v-U^{(i)}|^2 f_idv.\cr
	\end{split}
	\end{align}
		
	\noindent$(b)$ Global macroscopic fields: 
	\begin{align}\label{global}
	\begin{split}
	&n=\sum_{i=1}^{4}n^{(i)},\qquad \rho=\sum_{i=1}^{4}\rho^{(i)},\qquad U=\frac{1}{\rho}\sum_{i=1}^{4}\rho^{(i)}U^{(i)},\\
	&nkT=\sum_{i=1}^{4}n^{(i)}kT^{(i)}+\frac{1}{3}\sum_{i=1}^{4}\rho^{(i)}(|U^{(i)}|^2-|U|^2).
	\end{split}
	\end{align}
	
	Now, we are ready to derive the parameters determining reactive Maxwellian $\mathcal{M}_i$, and present our models.
	%We then define the global macroscopic parameters as follows:\newline
	\subsection{BGK model for slow chemical reaction}
	Our first model is proposed in \cite{GS} and describes the dynamics for Maxwellian molecules with slow chemical reactions. The stationary problem in a slab of which reads
	\begin{align}\label{system}
	v_1\frac{\partial f_i}{\partial x}=\frac{\nu_i}{\tau}\left(\mathcal{M}_i -f_i\right)\ \text{on}\ [0,1]\times\mathbb{R}^3,\quad (i=1,2,3,4)
	\end{align}
	subject to the boundary data:
	\[
	f_i(0,v)=f_{i,L}(v), \text{on}\ v_1>0,\quad
	f_i(1,v)=f_{i,R}(v),\text{on}\  v_1<0,\]
	with the reactive mawellians $\mathcal{M}_i$ defined by
	\begin{align*}
	\mathcal{M}_i=n_i\bigg(\frac{m_i}{2\pi k T_i}\bigg)^{3/2} \exp{\bigg(-\frac{m|v-U_i|^2}{2 kT_i}\bigg)},
	\end{align*}
	where the  reactive parameters are determined  as follows:
	First, we define the  collision frequencies $\nu_i$ by
	\begin{align}\label{nu fast}
	\begin{split}
	\nu_1=&\sum_{j=1}^4 \nu_{1j}n^{(j)}+\frac{2}{\sqrt{\pi}}\Gamma\bigg(\frac{3}{2},\frac{\Delta E}{kT}\bigg)\nu_{12}^{34}n^{(2)},\cr
	\nu_2=&\sum_{j=1}^4 \nu_{2j}n^{(j)}+\frac{2}{\sqrt{\pi}}\Gamma\bigg(\frac{3}{2},\frac{\Delta E}{kT}\bigg)\nu_{12}^{34}n^{(1)},\cr
	\nu_3=&\sum_{j=1}^4 \nu_{3j}n^{(j)}+\frac{2}{\sqrt{\pi}}\Gamma\bigg(\frac{3}{2},\frac{\Delta E}{kT}\bigg)\bigg(\frac{\mu^{12}}{\mu^{34}}\bigg)^{3/2}e^{\Delta E/kT}\nu_{12}^{34}n^{(4)},\cr
	\nu_4=&\sum_{j=1}^4 \nu_{4j}n^{(j)}+\frac{2}{\sqrt{\pi}}\Gamma\bigg(\frac{3}{2},\frac{\Delta E}{kT}\bigg)\bigg(\frac{\mu^{12}}{\mu^{34}}\bigg)^{3/2}e^{\Delta E/kT}\nu_{12}^{34}n^{(3)}.\cr
	\end{split}
	\end{align}
	Then we define  $n_i$, $U_i$ and $T_i$ as
	\begin{align}\label{B-1}
\begin{aligned}
	n_i&=n^{(i)}+\frac{\lambda_i}{\nu_i}\mathcal{S},\\
	m_in_iU_i&=m_in^{(i)}U^{(i)}+\frac{2}{\nu_i}\sum_{j=1}^{4}\chi_{ij}\mu_{ij}n^{(i)}n^{(j)}(U^{(j)}-U^{(i)})+\frac{\lambda_i}{\nu_i}m_iU\mathcal{S},\\
	\frac{3}{2}n_ikT_i&=\frac{3}{2}n^{(i)}kT^{(i)}-\frac{1}{2}m_i[n_i|U_i|^2-n^{(i)}|U^{(i)}|^2]+\frac{6k}{\nu_i}\sum_{j=1}^{4}\chi_{ij}\frac{\mu_{ij}}{m_i+m_j}n^{(i)}n^{(j)}(T^{(j)}-T^{(i)})\\
	&\quad +\frac{2}{\nu_i}\sum_{j=1}^{4}\chi_{ij}\frac{\mu_{ij}}{m_i+m_j}n^{(i)}n^{(j)}(m_iU^{(i)}+m_jU^{(j)})(U^{(j)}-U^{(i)})\\
	&\quad +\frac{\lambda_i}{\nu_i}\mathcal{S}\bigg[\frac{1}{2}m_i|U|^2+\frac{3}{2}kT+\frac{M-m_i}{M}kT\frac{(\Delta E/kT)^{3/2} e^{-\Delta E/kT}}{\Gamma(\frac{3}{2},\frac{\Delta E}{kT})}-\frac{1-\lambda_i}{2}\frac{M-m_i}{M}\Delta E\bigg],
\end{aligned}
	\end{align}
	where the quantity  $\mathcal{S}$ is defined by
	\begin{align*}
	\mathcal{S}=\nu_{12}^{34}\frac{2}{\sqrt{\pi}}\Gamma\bigg(\frac{3}{2},\frac{\Delta E}{kT}\bigg)\bigg[n^{(3)}n^{(4)}\bigg(\frac{m_1m_2}{m_1m_2}\bigg)^{3/2}e^{\Delta E/kT}-n^{(1)}n^{(2)}\bigg],
	\end{align*}

	 We note that the collision frequencies are given by the combination of non-ractive part, which takes the same form with the original BGK model for non-reacting gases \cite{AAP, BGK}, and the chemcial reaction part.
	\subsection{BGK model for fast chemical reaction}
	Our second model is proposed in \cite{GRS}, and it represents gas mixtures with fast chemical reactions:
	\begin{align}\label{system2}
	v_1\frac{\partial f_i}{\partial x}=\frac{\tilde\nu_i}{\tau}\big(\widetilde{\mathcal{M}}_i -f_i\big)\ \text{on}\ [0,1]\times\mathbb{R}^3,\quad (i=1,2,3,4)
	\end{align}
	subject to boundary data:
	\[ f_i(0,v)=f_{i,L}(v)\ \text{on}\ v_1>0,\quad f_i(1,v)=f_{i,R}(v)\ \text{on}\  v_1<0.\]
	The reactive maxwellian $\widetilde{\mathcal{M}}_i$ is defined by
	\begin{align*}
	\widetilde{\mathcal{M}}_i:=\tilde n^{i}\bigg(\frac{m_i}{2\pi k \tilde T}\bigg)^{3/2} \exp{\bigg(-\frac{m_i|v-\tilde U|^2}{2 k\tilde T}\bigg)}.
	\end{align*}
	The reactive parameters for this model are determined implicitly through the following procedure.\newline
		\noindent(a) First, we define the collision frequencies $\tilde{\nu}_i$  as follows:
	\begin{align}\label{nu slow}
	\begin{split}
	\tilde{\nu}_1=&\sum_{j=1}^4 \nu_{1j}n^{(j)}+\bigg(\frac{\mu^{34}}{\mu^{12}}\bigg)^{3/2}e^{-\Delta E/kT}\nu_{34}^{12}n^{(2)},\\
	\tilde{\nu}_2=&\sum_{j=1}^4 \nu_{2j}n^{(j)}+\bigg(\frac{\mu^{34}}{\mu^{12}}\bigg)^{3/2}e^{-\Delta E/kT}\nu_{34}^{12}n^{(1)},\\
	\tilde{\nu}_3=&\sum_{j=1}^4 \nu_{3j}n^{(j)}+ \nu_{34}^{12}n^{(4)},\\
	\tilde{\nu}_4=&\sum_{j=1}^4 \nu_{4j}n^{(j)}+ \nu_{34}^{12} n^{(3)}.
	\end{split}
	\end{align}
	\noindent(b) We define a function $F(x)$ by
	\begin{align}\label{B-30}
	\begin{aligned}
	F(x)&:=\frac{\bigg\{\sum_{i=1}^4\tilde{\nu}_i n^{(i)}\Big[\frac{1}{2}m_i(|U^{(i)}|^2-|\tilde U|^2)+\frac{3}{2}kT^{(i)}\Big]+\Delta E\tilde\nu_1(x-n^{1})\bigg\}}{\bigg(\frac{3}{2}k\sum_{i=1}^4 \tilde\nu_i n^{(i)}\bigg)}.
	\end{aligned}
	\end{align}
	With this definition of $F$, we first  define $\tilde n_{1}$ as the unique root of the equation
	\begin{equation}\label{B-3}
	\frac{\tilde\nu_3\tilde\nu_4}{\tilde\nu_1\tilde\nu_2}\frac{\tilde\nu_1x[ \tilde\nu_2n^{(2)}+\tilde\nu_1(x- n^{(1)})]}{[\tilde\nu_3n^{(3)}-\tilde\nu_1(x-  n^{(1)})][ \tilde\nu_4 n^{(4)}-\tilde\nu_1( x- n^{(1)})]}\exp\left(-\frac{\Delta E}{kF(x)}\right)=\left(\frac{\mu^{12}}{\mu^{34}}\right)^{3/2}
	\end{equation}
	or, equivalently,
	\begin{equation}\label{B-33}
	\frac{\tilde\nu_3\tilde\nu_4}{\tilde\nu_1\tilde\nu_2}\frac{\tilde\nu_1\tilde n_1[ \tilde\nu_2n^{(2)}+\tilde\nu_1(\tilde n_{1}- n^{(1)})]}{[\tilde\nu_3n^{(3)}-\tilde\nu_1(\tilde n_{1}-  n^{(1)})][ \tilde\nu_4 n^{(4)}-\tilde\nu_1( \tilde n_{1}- n^{(1)})]}\exp\left(-\frac{\Delta E}{kF(\tilde n_{1})}\right)=\left(\frac{\mu^{12}}{\mu^{34}}\right)^{3/2}
	\end{equation}
	in the domain defined by the constraint of positivity for density and temperature fields, i.e.,
	\begin{align*}
	\begin{aligned}
	&\tilde n_{1}>0, \quad\tilde n_{1}>n^{(1)}-\frac{\tilde\nu_2}{\tilde\nu_1}n^{(2)}, \quad  \tilde n_{1}<n^{(1)}+\frac{\tilde\nu_3}{\tilde\nu_1}n^{(3)},\quad \tilde n_{1}<n^{(1)}+\frac{\tilde\nu_4}{\tilde\nu_1}n^{(4)}, \\
	&\hspace{1cm}\tilde n_{1}>n^{(1)}-\frac{1}{\tilde\nu_1}\frac{1}{\Delta E} \sum_{i=1}^4\tilde\nu_i n^{(i)}\Big[\frac{1}{2}m_i(|U^{(i)} -\tilde U|^2)+\frac{3}{2}kT^{(i)}\Big].
	\end{aligned}
	\end{align*}
	Since the left-hand-side of \eqref{B-3} is a strictly increasing function of $x$ with its range $(0,~\infty)$, the root of \eqref{B-3} always uniquely exists. (See Lemma \ref{lemma48})\newline

	\noindent(c) With such $\tilde n_1$, we define $\tilde U$ and $\tilde n_{2}, \tilde n_3, \tilde n_4$ as follows:
	\begin{align}\label{B-2}
	\begin{aligned}
	\tilde n_{i}&:=n^{(i)}+\lambda_i \frac{\tilde{\nu}_1}{\tilde{\nu}_i}(\tilde n_1-n^{(1)}),\quad i=2,3,4,\\
	\tilde U&:=\sum_{i=1}^4 \tilde{\nu}_im_in^{(i)}U^{(i)}\bigg/ \sum_{i=1}^4\tilde{\nu}_i m_i n^{(i)}.\\
	\end{aligned}
	\end{align}
	And $\tilde{T}$ is defined by
	\begin{align}\label{Temp}
	\begin{aligned}
	\tilde T:=F(\tilde n_1)=\frac{\bigg\{\sum_{i=1}^4\tilde{\nu}_i n^{(i)}\Big[\frac{1}{2}m_i(|U^{(i)}|^2-|\tilde U|^2)+\frac{3}{2}kT^{(i)}\Big]+\Delta E\tilde\nu_1(\tilde n_{1}-n^{1})\bigg\}}{\bigg(\frac{3}{2}k\sum_{i=1}^4 \tilde\nu_i n^{(i)}\bigg)}.
	\end{aligned}
	\end{align}

\section{Main result}
Before we state our main result, we need to define notations and norms:\newline

\noindent$\bullet$ Every constant denoted by $C$ will be generically defined. The values of $C$ may differ line by line.\\
$\bullet$ We use $C_{l,u}$ to denote a positive constant depending only on the given constants and the quantities defined in (\ref{parameters1}),(\ref{parameters2}) and (\ref{parameters3}).\\
$\bullet$ We define the norm $\norm{\cdot}_{L_2^1}$ by
\begin{align*}
\norm{f}_{L_2^1}=\int_{\mathbb{R}^3}|f(x,v)|(1+|v|^2)dv.
\end{align*}
$\bullet$ We define the following quantities for brevity $(i=1,2,3,4)$.
\begin{align}\label{parameters1}
\begin{split}
&\hspace{0.2cm}a_{i,u}=2\int_{\mathbb{R}^3}f_{i,LR}dv,\quad  a_{i,s}=\int_{\mathbb{R}^3}\frac{1}{|v_1|}f_{i,LR}dv,\quad
a_{i,l}=\frac{1}{8}a_{i,u},\cr
&c_{i,u}=2\int_{\mathbb{R}^3}f_{i,LR}|v|^2dv\quad c_{i,s}=\int_{\mathbb{R}^3}\frac{1}{|v_1|}f_{i,LR}|v|^2dv,
\quad c_{i,l}=\frac{1}{8}c_{i,u},
\end{split}
\end{align}
where we used the notation:
\begin{align*}
f_{i,LR}(v)&=f_{i,L}(v)1_{v_1>0}+f_{i,R}(v)1_{v_1<0}.
\end{align*}
From this, we define
\begin{align}\label{parameters2}
\begin{split}
a_u=\underset{1\leq i\leq 4}{\max}{\{a_{i,u}\}},\ a_l=\underset{1\leq i\leq 4}{\min}{\{a_{i,l}\}},\ c_u=\underset{1\leq i\leq 4}{\max}{\{c_{i,u}\}},\  c_l=\underset{1\leq i\leq 4}{\min}{\{c_{i,l}\}}.\ 
\end{split}
\end{align}
$\bullet$ We also define the following quantity, which will serve as a lower bound for the temperature.
\begin{align}\label{parameters3}
\gamma_{i,l}=\frac{1}{16}\bigg(\int_{v_1>0}f_{i,L}|v_1|dv\bigg)\bigg(\int_{v_1<0}f_{i,R}|v_1|dv\bigg)
\end{align}
and 
\[
\gamma_l=\underset{1\leq i\leq 4}{\min}\{\gamma_{i,l}\}.
\]

Now, we define the mild solution of the system of PDE (\ref{system}):
\begin{definition} A pair of functions $f=(f_1,f_2,f_3,f_4) \in (L^\infty([0,1]_x;L_2^1(\mathbb{R}_v^3)))^4$ is said to be a mild solution for (\ref{system}) if $f_i$ satisfies the following equation:
\begin{align*}
f_i(x,v)=&\bigg(e^{-\frac{1}{\tau |v_i|}\int_0^x \nu_i(y)dy }f_{i,L}(v)+\frac{1}{\tau |v_1|}\int_0^xe^{-\frac{1}{\tau |v_1|}\int_y^x\nu_i(z)dz}\nu_i\mathcal{M}_idy\bigg)1_{v_1>0}\\
&+\bigg(e^{-\frac{1}{\tau |v_1|}\int_x^1 \nu_i(y)dy }f_{i,R}(v)+\frac{1}{\tau |v_1|}\int_x^1e^{-\frac{1}{\tau |v_1|}\int_x^y\nu_i(z)dz}\nu_i\mathcal{M}_idy\bigg)1_{v_1<0},
\end{align*}
for each $i=1,2,3,4$.
\end{definition}
And we also define the mild solution to (\ref{system2}):
\begin{definition} A pair of functions $f=(f_1,f_2,f_3,f_4) \in (L^\infty([0,1]_x;L_2^1(\mathbb{R}_v^3)))^4$ is said to be a mild solution for (\ref{system2}) if $f_i$ satisfies the following equation:
	\begin{align*}
	f_i(x,v)=&\bigg(e^{-\frac{1}{\tau |v_i|}\int_0^x \tilde\nu_i(y)dy }f_{i,L}(v)+\frac{1}{\tau |v_1|}\int_0^xe^{-\frac{1}{\tau |v_1|}\int_y^x\tilde\nu_i(z)dz}\tilde\nu_i\tilde{\mathcal{M}}_idy\bigg)1_{v_1>0}\\
	&+\bigg(e^{-\frac{1}{\tau |v_1|}\int_x^1 \tilde\nu_i(y)dy }f_{i,R}(v)+\frac{1}{\tau |v_1|}\int_x^1e^{-\frac{1}{\tau |v_1|}\int_x^y\tilde\nu_i(z)dz}\tilde\nu_i\tilde{\mathcal{M}}_idy\bigg)1_{v_1<0},
	\end{align*}
	for each $i=1,2,3,4$.
\end{definition}
The main results of this paper are as follows:
\begin{theorem}\label{main} Suppose $f_{i,LR},\frac{1}{|v_1|}f_{i,LR}\in L_2^1(\mathbb{R}^3_v)$. Assume that the inflow data does not induce vertical flows on the boundary: 
\begin{align*}
&\int_{\mathbb{R}^2}f_{i,L}v_jdv_2dv_3=\int_{\mathbb{R}^2}f_{i,R}v_jdv_2dv_3=0.\quad (j=2,3)
\end{align*}
Then there exist two constants $\epsilon,L>0$, depending only the constants defined in (\ref{parameters1}), (\ref{parameters2}) and (\ref{parameters3}), such that if $\epsilon>\nu_{12}^{34}>0$ and $\tau>L$, then there exists a unique mild solution $f=(f_1,f_2,f_3,f_4)$ for (\ref{system}) satisfying
\begin{align*}
a_{i,l}\leq\int_{\mathbb{R}^3}f_i(x,v)dv\leq a_{i,u},\qquad c_{i,l}\leq\int_{\mathbb{R}^3}|v|^2f_i(x,v)dv\leq c_{i,u}
\end{align*}
and
\begin{align*}
\bigg(\int_{\mathbb{R}^3}f_idv\bigg)\bigg(\int_{\mathbb{R}^3}|v|^2f_idv\bigg)-\bigg(\int_{\mathbb{R}^3}v_1f_idv\bigg)^2\geq\gamma_l.
\end{align*}
\end{theorem}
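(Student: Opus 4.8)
The plan is to recast \eqref{system} as a fixed point problem and apply the Banach fixed point theorem. Let $\Omega$ be the set of all $f=(f_1,f_2,f_3,f_4)$ with nonnegative components in $\big(L^\infty([0,1]_x;L_2^1(\mathbb{R}^3_v))\big)^4$ satisfying the three a priori bounds in the statement together with the no‑vertical‑flow constraint $\int_{\mathbb{R}^2}f_iv_j\,dv_2dv_3=0$ $(j=2,3)$, equipped with the norm $\|f\|:=\sup_{x\in[0,1]}\sum_{i=1}^4\|f_i(x,\cdot)\|_{L_2^1}$; this is a nonempty, closed, bounded, convex subset of a Banach space. To $f\in\Omega$ associate the macroscopic fields \eqref{single}--\eqref{global}, then the reactive parameters \eqref{nu fast}--\eqref{B-1} and the Maxwellians $\mathcal{M}_i$, and let $\Phi(f)=(\bar f_1,\dots,\bar f_4)$ be given by the right‑hand side of the integral identity in the definition of a mild solution of \eqref{system}. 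A fixed point $f=\Phi(f)$ in $\Omega$ is exactly a mild solution of \eqref{system} satisfying the stated bounds, and its uniqueness among such solutions is the uniqueness of the fixed point.

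\textbf{Step 1: two‑sided control of the reactive parameters (the main obstacle).} For $f\in\Omega$ the density and energy bounds give $|U^{(i)}|\le C_{l,u}$ and $T^{(i)}\le C_{l,u}$, while the identity $3k\rho^{(i)}T^{(i)}=\frac{m_i^2}{n^{(i)}}\big[\big(\int f_i\,dv\big)\big(\int|v|^2f_i\,dv\big)-\big(\int v_1f_i\,dv\big)^2\big]$ — valid on $\Omega$ by the no‑vertical‑flow constraint — together with the third bound yields $T^{(i)}\ge C_{l,u}^{-1}>0$. Hence all global fields are controlled and $\Delta E/kT$ stays in a compact subset of $(0,\infty)$, so $\Gamma(\tfrac32,\Delta E/kT)$ and $e^{\pm\Delta E/kT}$ are bounded above and below. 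From \eqref{nu fast} one gets constants $0<\nu_l\le\nu_i\le\nu_u$ in $C_{l,u}$ (the lower bound from the elastic part via $n^{(i)}\ge a_{i,l}$), while $\mathcal{S}$ and the reactive part of each $\nu_i$ are $O(\nu_{12}^{34})$. Feeding these into \eqref{B-1} and using $\chi_{ij}\le\nu_{ij}$ to bound the elastic mixing terms $\nu_i^{-1}\chi_{ij}\mu_{ij}n^{(i)}n^{(j)}$, one derives $|U_i|\le C_{l,u}$, $C_{l,u}^{-1}\le T_i\le C_{l,u}$, and $C_{l,u}^{-1}\le n_i\le C_{l,u}$ — positivity of $n_i$ forcing $\nu_{12}^{34}<\epsilon$ small so that $n_i$ stays near $n^{(i)}\ge a_{i,l}$ — whence $\mathcal{M}_i(x,v)\le\mathcal{G}(v)$ for a fixed Gaussian $\mathcal{G}$ with finite moments, all constants depending only on the data (cf.\ Section~4 and Lemma~\ref{lb}). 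This step is the main difficulty: the coefficients in \eqref{B-1} are coupled through genuinely nonlinear expressions (the incomplete $\Gamma$‑function, the exponentials, and $\mathcal{S}$), so their two‑sided control must be extracted from structural properties of those relations, and $\epsilon$ must be chosen so as not to spoil the lower bounds on $n_i$ and $T_i$.

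\textbf{Step 2: invariance $\Phi(\Omega)\subset\Omega$.} Write $\bar f_i=\bar f_i^{\mathrm{bd}}+\bar f_i^{M}$, the first term being the boundary contribution. Since the exponential damping depends on $v$ only through $v_1$ and, on $\Omega$, the bulk velocities $U_i$ are purely horizontal, one has $\int_{\mathbb{R}^2}\bar f_iv_j\,dv_2dv_3=0$ $(j=2,3)$. Since $\frac{1}{\tau|v_1|}\int_0^xe^{-\frac1{\tau|v_1|}\int_y^x\nu_i}\nu_i\,dy=1-e^{-\frac1{\tau|v_1|}\int_0^x\nu_i}\le\min\{1,\nu_u/(\tau|v_1|)\}$ (and similarly for $v_1<0$), Step~1 gives $\int(1+|v|^2)\bar f_i^{M}\,dv\le\int(1+|v|^2)\mathcal{G}(v)\min\{1,\tfrac{\nu_u}{\tau|v_1|}\}\,dv\le C_{l,u}\tfrac{\log\tau}{\tau}$, the logarithm coming from cutting off $\int\mathcal{G}/|v_1|$ at $|v_1|\sim\tau^{-1}$. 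With $\int(1+|v|^2)\bar f_i^{\mathrm{bd}}\,dv\le\int(1+|v|^2)f_{i,LR}\,dv$ this gives the upper bounds $\int\bar f_i\le a_{i,u}$, $\int|v|^2\bar f_i\le c_{i,u}$ for $\tau>L$. For the lower bounds, discard $\bar f_i^{M}\ge0$ and use $\bar f_i^{\mathrm{bd}}\ge e^{-\nu_u/(\tau|v_1|)}f_{i,LR}$; since $|v_1|^{-1}f_{i,LR}\in L^1$, the mass of $f_{i,LR}$ on $\{|v_1|<\delta\}$ is $\le\delta a_{i,s}$, so fixing $\delta$ small and then $\tau$ large gives $\int\bar f_i\ge\tfrac12\int f_{i,LR}\ge a_{i,l}$ and likewise $\int|v|^2\bar f_i\ge c_{i,l}$. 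Finally, by the identity $\big(\int g\,dv\big)\big(\int v_1^2g\,dv\big)-\big(\int v_1g\,dv\big)^2=\tfrac12\iint(v_1-w_1)^2g(v)g(w)\,dvdw$, which is monotone in $g\ge0$, and $(v_1-w_1)^2\ge4|v_1||w_1|$ for $v_1>0>w_1$,
\begin{align*}
&\Big(\int\bar f_i\,dv\Big)\Big(\int|v|^2\bar f_i\,dv\Big)-\Big(\int v_1\bar f_i\,dv\Big)^2\ \ge\ \tfrac12\iint(v_1-w_1)^2\,\bar f_i^{\mathrm{bd}}(v)\,\bar f_i^{\mathrm{bd}}(w)\,dvdw\\
&\qquad\ge\ 4\Big(\int_{v_1>0}|v_1|\,\bar f_i^{\mathrm{bd}}\,dv\Big)\Big(\int_{v_1<0}|v_1|\,\bar f_i^{\mathrm{bd}}\,dv\Big)\ \ge\ \Big(\int_{v_1>0}v_1f_{i,L}\,dv\Big)\Big(\int_{v_1<0}|v_1|f_{i,R}\,dv\Big)=16\gamma_{i,l},
\end{align*}
the last inequality by dominated convergence ($v_1e^{-\nu_u/(\tau v_1)}\to v_1$) for $\tau>L$; hence the functional is $\ge16\gamma_{i,l}\ge\gamma_l$.

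\textbf{Step 3: contraction and conclusion.} For $f,g\in\Omega$, Step~1 shows the macroscopic fields, hence $\nu_i$ and $\mathcal{M}_i$, are Lipschitz in $f$: $\|\nu_i^f-\nu_i^g\|_{L^\infty_x}\le C_{l,u}\|f-g\|$ and $|\mathcal{M}_i^f(y,v)-\mathcal{M}_i^g(y,v)|\le C_{l,u}\|f-g\|\,\mathcal{G}(v)$. In $\bar f_i^{\mathrm{bd}}$ only the damping factor changes, and $\big|e^{-\frac1{\tau|v_1|}\int_0^x\nu_i^f}-e^{-\frac1{\tau|v_1|}\int_0^x\nu_i^g}\big|\le\frac1{\tau|v_1|}\int_0^x|\nu_i^f-\nu_i^g|$, so integrating against $(1+|v|^2)f_{i,LR}$ gains a factor $\tau^{-1}$ because $|v_1|^{-1}(1+|v|^2)f_{i,LR}\in L^1$. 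In $\bar f_i^{M}$, telescope the difference of $e^{-B}\nu_i\mathcal{M}_i$ (with $B=\frac1{\tau|v_1|}\int_y^x\nu_i\,dz$) into the three natural pieces: the two carrying $\nu_i^f-\nu_i^g$ or $\mathcal{M}_i^f-\mathcal{M}_i^g$ are estimated as in Step~2 by $C_{l,u}\tfrac{\log\tau}{\tau}\|f-g\|$, while for the piece with $e^{-B_f}-e^{-B_g}$ one uses $|e^{-B_f}-e^{-B_g}|\le|B_f-B_g|e^{-\min(B_f,B_g)}\le\frac{C_{l,u}(x-y)}{\tau|v_1|}\|f-g\|\,e^{-\nu_l(x-y)/(\tau|v_1|)}$ together with the elementary bound $\int_0^x(x-y)e^{-\nu_l(x-y)/(\tau|v_1|)}\,dy\le\min\{1,(\tau|v_1|)^2/\nu_l^2\}$, which after integration in $v$ (a slab estimate near $v_1=0$, $|v_1|^{-1}\mathcal{G}$ integrable away from it) again gives $C_{l,u}\tfrac{\log\tau}{\tau}\|f-g\|$. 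Summing over $i$, $\|\Phi(f)-\Phi(g)\|\le C_{l,u}\tfrac{\log\tau}{\tau}\|f-g\|\le\tfrac12\|f-g\|$ for $\tau>L$, so $\Phi$ is a contraction of $\Omega$ into itself; the Banach fixed point theorem yields the unique $f\in\Omega$ with $\Phi(f)=f$, the desired mild solution. Here $\epsilon$ is fixed first (Step~1) and then $L$ (Steps~2--3), both depending only on the quantities in \eqref{parameters1}--\eqref{parameters3}.
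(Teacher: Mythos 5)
Your overall architecture coincides with the paper's: a Banach fixed point argument on the moment\-/constrained set $\Omega$, invariance proved by splitting each $\phi_i$ into a boundary part and a Maxwellian part with the $\min\{1,\nu_u/(\tau|v_1|)\}$ damping estimate (producing the same $(\ln\tau+1)/\tau$ gain), and contraction via Lipschitz bounds on $\nu_i$ and $\mathcal{M}_i$. Steps 2 and 3 are sound, and your variant of imposing $\int_{\mathbb{R}^2}f_iv_j\,dv_2dv_3=0$ $(j=2,3)$ as an exactly preserved constraint of $\Omega$ is a clean alternative to the paper's Lemma \ref{estimate7}, which only shows the vertical moments of $\Phi(f)$ are $O((\ln\tau+1)/\tau)$ and then absorbs them into the $\gamma_l$ margin. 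However, there is a genuine gap at the single most delicate point of the whole proof: the uniform lower bound $T_i\ge C_{l,u}^{-1}>0$ in your Step 1 is asserted (``one derives\dots'') but never argued, and it does not follow from the two-sided bounds on $n_i$, $|U_i|$, $T^{(i)}$, $\nu_i$ together with the smallness of $\nu_{12}^{34}$. In $\eqref{B-1}_3$ the right-hand side contains the $O(1)$ (not $O(\nu_{12}^{34})$) sign-indefinite terms $-\tfrac12 m_i[n_i|U_i|^2-n^{(i)}|U^{(i)}|^2]$, the temperature-exchange sum $\frac{6k}{\nu_i}\sum_j\chi_{ij}\frac{\mu_{ij}}{m_i+m_j}n^{(i)}n^{(j)}(T^{(j)}-T^{(i)})$, and the momentum-exchange sum; a term-by-term estimate only yields $\tfrac32 n_ikT_i\ge(\text{positive})-(\text{positive})$ with no control on the sign of the difference.

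The paper's Lemma \ref{lb} is devoted precisely to closing this gap, and every ingredient there is essential: the hypothesis $\chi_{ij}\le\nu_{ij}$ combined with $4m_im_j\le(m_i+m_j)^2$ shows the net coefficient of $T^{(i)}$ is nonnegative ($I_1\ge0$); the algebraic identities (\ref{in view 1})--(\ref{in view 2}) recombine the velocity terms into $\frac{2}{\nu_i}\sum_j\chi_{ij}\frac{\mu_{ij}}{m_i+m_j}n^{(i)}n^{(j)}m_j|U^{(j)}-U^{(i)}|^2-\frac{m_in_i}{2}|U_i-U^{(i)}|^2+\frac{\lambda_i}{\nu_i}\mathcal{S}\frac{m_i}{2}|U^{(i)}-U|^2$, which is the value $f(\lambda_i\mathcal{S}/\nu_i)$ of the functional $f$; the nonnegativity $f(0)\ge0$ is not elementary but is Theorem 3.1 of \cite{AAP} (the consistency of the non-reactive mixture BGK temperatures); and only after the bound $f(t)\ge f(0)-C_{l,u}|t|$ does the smallness $\nu_{12}^{34}<\epsilon$ let one absorb $C_{l,u}|\mathcal{S}/\nu_i|$ into the strictly positive remainder $\frac{3k}{\nu_i}\sum_j\chi_{ij}\frac{\mu_{ij}}{m_i+m_j}(a_l)^2\frac{m_i\gamma_l}{(a_u)^2}$ coming from $I_2$. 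Without this (or an equivalent structural argument), the Gaussian domination $\mathcal{M}_i\le\mathcal{G}$ in your Step 2 and the bound on $\partial\mathcal{M}_i/\partial T$ in your Step 3 have no foundation, since both require $T_i$ bounded away from zero. Supplying this argument is what the theorem's smallness hypothesis on $\nu_{12}^{34}$ is for; the rest of your proposal then goes through.
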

%\begin{remark}
%	For Theorem \ref{main}, we need smallness of the chemical collision frequency $\nu_{12}^{34}$ depending on the boundary condition, which come from the proof of the positivity of the temperature $T_i$. In fact, we prove that we can't guarantee the positivity of $T_3$ and $T_4$ without this dependence. See the Proposition \ref{appen pro} in appendix.
%\end{remark}
\begin{theorem}\label{main2} Suppose $f_{i,LR},\frac{1}{|v_1|}f_{i,LR}\in L_2^1$. Assume that the inflow data does not induce vertical flows on the boundary:
	\begin{align*}
	&\int_{\mathbb{R}^2}f_{i,L}v_jdv_2dv_3=\int_{\mathbb{R}^2}f_{i,R}v_jdv_2dv_3=0. \quad(j=1,2)
	\end{align*}
	Then there exists a constant $L>0$, depending only the constants defined in (\ref{parameters1}), (\ref{parameters2}) and (\ref{parameters3}), such that if $\tau>L$, then there exists a unique mild solution $f=(f_1,f_2,f_3,f_4)$ for (\ref{system2}) satisfying
	\begin{align*}
	a_{i,l}\leq\int_{\mathbb{R}^3}f_i(x,v)dv\leq a_{i,u},\qquad c_{i,l}\leq\int_{\mathbb{R}^3}|v|^2f_i(x,v)dv\leq c_{i,u}
	\end{align*}
	and
	\begin{align*}
	\bigg(\int_{\mathbb{R}^3}f_idv\bigg)\bigg(\int_{\mathbb{R}^3}|v|^2f_idv\bigg)-\bigg(\int_{\mathbb{R}^3}v_1f_idv\bigg)^2\geq\gamma_l.\cr
	\end{align*}
\end{theorem}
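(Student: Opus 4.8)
The plan is to recast each stationary problem as a fixed-point problem for the associated solution map and to solve it by a contraction argument, exploiting the largeness of the Knudsen number $\tau$ (and, for Theorem \ref{main}, the smallness of $\nu_{12}^{34}$) to turn the nonlinear coupling through the reactive Maxwellians into a genuine perturbation of the free-transport dynamics. I would let $\Omega$ be the set of nonnegative $f=(f_1,f_2,f_3,f_4)\in\big(L^\infty([0,1]_x;L_2^1(\mathbb R^3_v))\big)^4$ satisfying, uniformly in $x\in[0,1]$, the moment bounds $a_{i,l}\le\int f_i\,dv\le a_{i,u}$, $c_{i,l}\le\int|v|^2f_i\,dv\le c_{i,u}$, together with $\big(\int f_i\,dv\big)\big(\int|v|^2f_i\,dv\big)-\big(\int v_1 f_i\,dv\big)^2\ge\gamma_l$, and equip it with $\norm{f}:=\sup_x\sum_i\norm{f_i}_{L_2^1}$; this is a complete metric space because the defining inequalities are closed under $L_2^1$-convergence. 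The map $\Phi(f)=g$ is defined by taking $g_i$ to be the Duhamel expression in the definition of mild solution for \eqref{system2} (resp.\ \eqref{system}), built with $\tilde\nu_i$ and $\widetilde{\mathcal M}_i$ (resp.\ $\nu_i$ and $\mathcal M_i$) computed from the macroscopic fields of $f$; a fixed point of $\Phi$ in $\Omega$ is exactly a mild solution with the asserted bounds, and uniqueness in $\Omega$ is uniqueness of the fixed point.

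The first task is to control the reactive parameters. For $f\in\Omega$ the single-species fields \eqref{single} and the global fields \eqref{global} are bounded above and below in terms of \eqref{parameters1}–\eqref{parameters3}, and the vanishing-transverse-moment hypothesis keeps the bulk velocities aligned with the slab axis; this is the content of Section 4. Inserting these bounds into the definitions of the reactive parameters, one checks that $\tilde\nu_i,\tilde U,\tilde T,\tilde n_i$ (resp.\ $\nu_i,U_i,T_i,n_i$) are well defined, strictly positive, and bounded by constants $C_{l,u}$ depending only on the data. For the fast model the delicate point is the implicit density $\tilde n_1$: by Lemma \ref{lemma48} the left side of \eqref{B-3} is strictly increasing in $x$ with range $(0,\infty)$, so its root exists and is unique, and a quantitative form of that monotonicity should keep $\tilde n_1$ a definite distance from each endpoint of the admissible domain, whence $\tilde n_2,\tilde n_3,\tilde n_4$ from \eqref{B-2} and $\tilde T$ from \eqref{Temp} are uniformly controlled; the structural bound $\chi_{ij}\le\nu_{ij}$ enters here through Lemma \ref{lb}. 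For the slow model, $\mathcal S$ and $1/\nu_i$ are controlled using $0<\nu_{12}^{34}<\epsilon$, which makes $n_i,U_i,T_i$ small perturbations of $n^{(i)},U^{(i)},T^{(i)}$. With these estimates, the invariance $\Phi(\Omega)\subseteq\Omega$ (Section 6) follows: in the Duhamel formula the boundary terms have moments comparable to $\frac12 a_{i,u},\frac12 c_{i,u}$ — the slack in $a_{i,l}=\frac18 a_{i,u}$, $c_{i,l}=\frac18 c_{i,u}$ is used here, together with the boundedness of $\frac1{|v_1|}e^{-c/(\tau|v_1|)}$ that makes the Maxwellian integral converge — while the Maxwellian terms are $O(\tau^{-1})$ with constant $C_{l,u}$, so the moment bounds persist once $\tau>L$; the non-degeneracy bound I would obtain from $\big(\int f_i\big)\big(\int|v|^2 f_i\big)-\big(\int v_1 f_i\big)^2\ge\frac12\iint f_i(v)f_i(w)(v_1-w_1)^2\,dv\,dw$, bounding $f_i$ below by its inflow traces on $\{v_1>0\}$ and $\{v_1<0\}$, using that for $\tau$ large the damping factors are close to $1$ away from $v_1=0$, and absorbing the loss near $v_1=0$ into the $\frac1{16}$ slack in $\gamma_{i,l}$.

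Next I would prove that $\Phi$ is a contraction (Section 7). For $f,\bar f\in\Omega$ the difference $\Phi(f)_i-\Phi(\bar f)_i$ splits into differences of the exponential damping factors, estimated via $|e^{-a}-e^{-b}|\le|a-b|$ which produces a factor $(\tau|v_1|)^{-1}$ absorbed using $f_{i,LR}/|v_1|\in L_2^1$; differences of $\tilde\nu_i$ (resp.\ $\nu_i$); and differences of the reactive Maxwellians. Each macroscopic field of $f$ is Lipschitz in $\norm{\cdot}$, the reactive parameters are Lipschitz in those fields — for $\tilde n_1$ by differentiating \eqref{B-3} implicitly, with the derivative bounded below by the quantitative monotonicity of Lemma \ref{lemma48} — and every resulting term carries a prefactor $\tau^{-1}$, or a factor $\nu_{12}^{34}<\epsilon$ in the reactive part of the slow model. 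Enlarging $L$, and for Theorem \ref{main} shrinking $\epsilon$, makes the Lipschitz constant of $\Phi$ strictly below $1$, so Banach's fixed point theorem gives a unique fixed point $f\in\Omega$, which is the unique mild solution in $\Omega$; the moment and non-degeneracy bounds hold because $f\in\Omega$.

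The hard part will be the analysis of the fast-reaction parameters underlying the two middle paragraphs: one must show not just that the root $\tilde n_1$ of the transcendental equation \eqref{B-3} exists, but that it sits in the admissible domain with distance to the boundary constraints bounded below uniformly over $f\in\Omega$, and that $\tilde n_1$ — hence $\tilde T$ and every $\widetilde{\mathcal M}_i$ — depends on $f$ Lipschitz-continuously with a constant that can be absorbed by $\tau^{-1}$. This is exactly where the nonlinear functionals encoding the monotonicity of \eqref{B-3} are needed. The parallel difficulty for the slow model is the simultaneous upper and lower control of $\mathcal S$ and of $1/\nu_i$, which is what forces the smallness assumption $\nu_{12}^{34}<\epsilon$ in Theorem \ref{main}.
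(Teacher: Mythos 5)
Your overall architecture coincides with the paper's: the same solution space $\Omega$ with conditions $(\mathcal A)$--$(\mathcal C)$, invariance of the Duhamel map obtained by showing the Maxwellian contribution is $O\big((\ln\tau+1)/\tau\big)$ against the slack factors $\tfrac18$ and $\tfrac1{16}$ in the definitions of $a_{i,l},c_{i,l},\gamma_{i,l}$, the transverse moments controlled by the no-vertical-flow hypothesis, and a contraction estimate built from Lipschitz continuity of the macroscopic fields, the reactive parameters, and the Maxwellians. Your treatment of the non-degeneracy condition via the two-point identity is equivalent to the paper's factorization $\big(\int\phi_i|v_1|\big)^2-\big(\int\phi_i v_1\big)^2=4\big(\int_{v_1>0}\phi_i|v_1|\big)\big(\int_{v_1<0}\phi_i|v_1|\big)$, and your implicit-function-theorem argument for the Lipschitz dependence of $\tilde n_1$ is exactly what the paper does in Section 7.

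There is, however, a genuine gap at the step you yourself flag as the hard part: the uniform two-sided bounds on $\tilde n_1$ (and hence on $\tilde n_2,\tilde n_3,\tilde n_4$ via \eqref{B-2} and on $\tilde T$ via \eqref{Temp}) over all $f\in\Omega$. You propose to get these from ``a quantitative form'' of the strict monotonicity in $z$ of the left side of \eqref{B-3}. That cannot work by itself: monotonicity in $z$, however quantitative, gives existence and uniqueness of the root for each \emph{fixed} $f$ and a modulus of continuity for the inverse, but it says nothing about where the root sits as the coefficients $n^{(i)},\tilde\nu_i,T^{(i)},|U^{(i)}-\tilde U|$ range over the box determined by $\Omega$ --- and the admissible domain itself moves with these coefficients. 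The paper's mechanism, which is the actual content of Lemma \ref{lemma48}, is monotonicity in the \emph{coefficients}: after taking logarithms, the functional $F_{\bold x,\bold y,\boldsymbol\mu,\boldsymbol\eta,\boldsymbol\alpha,\boldsymbol\beta}(z)$ is, for each fixed $z$, decreasing in $x_i,\mu_i,\alpha_i,|\beta_i|$ and increasing in $y_i,\eta_i$; consequently the root $G=F^{-1}\big(\tfrac32\log(\mu^{12}/\mu^{34})\big)$ is monotone in each coefficient and is therefore sandwiched between its values at the two extreme corners $(\bold a_u,\bold a_l,\boldsymbol{\nu^M},\boldsymbol{\nu^m},T_u\bold1,2R\bold1)$ and $(\bold a_l,\bold a_u,\boldsymbol{\nu^m},\boldsymbol{\nu^M},T_l\bold1,\bold0)$, which are explicit constants of the data. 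Without this (or an equivalent comparison principle), your invariance and contraction steps have no uniform positive lower bound on $\tilde n_i$ and $\tilde T$ to feed into the Gaussian upper bound for $\widetilde{\mathcal M}_i$, and the argument does not close. A minor additional correction: Lemma \ref{lb} and the condition $\chi_{ij}\le\nu_{ij}$ belong to the slow-reaction model of Theorem \ref{main}; they play no role in the fast-reaction parameter estimates relevant to Theorem \ref{main2}.
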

The key difficulty and novelty, along with being able to treat two different types of reactive model in a unified manner, arise from the way in which the reactive equilibrium coefficients are estimated.
%Reactive equilibrium coefficients corresponds to the macrosocpic fields in the non-reactive BGK models.
The macroscopic fields for non-reactive BGK models, which corresponds to the reactive equilibrium coefficients of the reactive BGK models, are defined from velocity distribution functions in an explicit manner through simple integral relation, and the relevent lower and upper bounds for the fields follows directly from the definition once suitable upper and lower
bounds are known for the moments of the distribution function.
The equilibrium coefficients for the reactive system on the other hand, are defined through highly nonlinear relations as was given in Section 2 above. Thererfore, determining various necessary a priori estimates of them cannot be treated in a straightforward manner as in the non-reactive case.

To overcome this difficulty, we introduce several nonlinear functionals which capture imortant structures of such nonlinear relations and carefully analyze  those functionals to derive the desired results for equilibrium coefficients.
For example, to estimate $T_i$ of the first reactive model of our paper, we introduce the following nonlinear functional:
\begin{align*}
	f(t)=&\frac{2}{\nu_i}\sum_{j=1}^{4}\chi_{ij}\frac{\mu_{ij}}{m_i+m_j}n^{(i)}n^{(j)}m_j|U^{(j)}-U^{(i)}|^2+t\frac{m_i}{2}|U^{(i)}-U|^2\\
	&-\frac{1}{2m_i(n^{(i)}+t)}\bigg|m_it(U-U^{(i)})+\frac{2}{\nu_i}\sum_{j=1}^{4}\chi_{ij}\mu_{ij}n^{(i)}n^{(j)}(U^{(j)}-U^{(i)})\bigg|^2,
\end{align*}
and show that $f$ satisfies 
$f(t)\geq -C|t|$ near zero. This observation enables us to estimate $T_i$ from above and below (See Lemma \ref{lb}).

On the other hand, to obtain the uniform lower and upper bound of $\tilde{n}_1$ of the second model, we 
devise the following nonlinear functional:
\begin{align*}
	F_{\bold x, \bold y, \boldsymbol\mu,\boldsymbol\eta,\boldsymbol\alpha,\boldsymbol\beta}(z)
	&= \log\frac{\mu_3 \mu_4}{\eta_2}+\log z+\log(\mu_2x_2+\mu_1z-\eta_1y_1)-\log(\eta_3y_3-\mu_1z+\eta_1y_1)\\
	&-\log(\eta_4y_4-\mu_1z+\eta_1y_1)-\frac{\frac{3}{2}\Delta E\sum\limits_{i=1}^4 \eta_i y_i}{\sum\limits_{i=1}^4\mu_i x_i\Big[\frac{1}{2}m_i(\beta_i^2)+\frac{3}{2}k\alpha_i\Big]+\Delta E(\mu_1z-\eta_1y_1)},
\end{align*}
and use the fact that for each fixed $z$, the function $(\bold x, \bold y, \boldsymbol\mu,\boldsymbol\eta,\boldsymbol\alpha,\boldsymbol\beta)\mapsto F_{\bold x, \bold y, \boldsymbol\mu,\boldsymbol\eta,\boldsymbol\alpha,\boldsymbol\beta}(z) $ is monotone in $x_i, \mu_i, \alpha_i, |\beta_i|, y_i, \eta_i$ $(i=1,2,3,4)$ (See Lemma \ref{lemma48}).%$(\bold x, \bold y, \boldsymbol\mu,\boldsymbol\eta,\boldsymbol\alpha,\boldsymbol\beta)\in (\mathbb{R}_+)^{20}\times\mathbb{R}^4$, $F_{\bold x, \bold y, \boldsymbol\mu,\boldsymbol\eta,\boldsymbol\alpha,\boldsymbol\beta}$ is a strictly increasing surjective function with respect to $z$ on a specific domain,  (See Lemma \ref{lemma48}).\newline

Before moving on to the proof, a brief review on the relevant analytical results are in order.
  The stationary problem for the BGK model in a bounded interval was first studied by Ukai in \cite{Ukai} using a Schauder type fixed point theorem.  Nouri studied the existence of weak solutions for a qunatum BGK model with a discretized condensation ansantz in \cite{Nouri}. The existence of unique mild solutions were obtained in \cite{Bang Y}, using classical Banach fixed point argument. This argument were then applied to the relativistic BGK model of Marle type \cite{HY} and to the quantum BGK model for non-saturated Fermion system and the Boson system without condensation \cite{BGCY}.

For Boltzmann equation, Ardyred et al. considered the slab problem in the framework of  measure-valued solutions \cite{ACI}. Arkeryd and Nouri studied the existence of weak solutions in a series of papers \cite{AN1,AN2,AN3}, which were extended to gas mixture problems by Brull \cite{B1,B2}. Ghomeshi considered the existence and uniqueness of the Boltzmann equation in a slab in \cite{Ghomeshi} (See also \cite{Maslova}). Esposito et al. \cite{ELM} studied hydrodynamic limits in a slab. All the literature reviewed on the existence is for slab problems. For stationary problems in general domains, we refer to \cite{EGKM,Giu1,Giu2}\newline
\section{Estimates of The Macroscopic Parameters}
Throughout this section, we assume that the velocity distribution $F=(f_1,f_2,f_3,f_4)$ satisfies the following inequalities:
	\begin{align*}
	a_{i,l}\leq\int_{\mathbb{R}^3}f_i(x,v)dv\leq a_{i,u},\qquad c_{i,l}\leq\int_{\mathbb{R}^3}|v|^2f_i(x,v)dv\leq c_{i,u}
	\end{align*}
	and
	\begin{align*}
	\bigg(\int_{\mathbb{R}^3}f_idv\bigg)\bigg(\int_{\mathbb{R}^3}|v|^2f_idv\bigg)-\bigg(\int_{\mathbb{R}^3}v_1f_idv\bigg)^2\geq\gamma_l.
	\end{align*}

\subsection{Estimates of the single component parameters} 
To prove the main theorems, we first estimate the macroscopic parameters.
\begin{lemma}\label{scmp}
	The single component parameters satisfy 
	\begin{align*}
	|U^{(i)}|\leq\frac{a_{i,u}+c_{i,u}}{2a_{i,l}}
	\end{align*}
	and
	\begin{align*}
	\frac{m_i\gamma_l}{3ka_{i,u}^2}\leq T^{(i)} \leq \frac{m_ic_{i,u}}{3ka_{i,l}}.
	\end{align*}
	\begin{proof}
		Firstly, $|U^{(i)}|$ can be written as follows:
		\begin{align*}
		|U^{(i)}|=\frac{|\rho^{(i)}U^{(i)}|}{\rho^{(i)}}=\frac{|\int_{\mathbb{R}^3}vf_idv|}{\int_{\mathbb{R}^3}f_idv}.
		\end{align*}
		By Young's inequality, we have
		\[
		\bigg|\int_{\mathbb{R}^3}vf_idv\bigg|\leq\frac{\int_{\mathbb{R}^3}f_idv+\int_{\mathbb{R}^3}|v|^2f_idv}{2}\leq\frac{a_{i,u}+c_{i,u}}{2},
		\]
		so that
		\begin{align*}
		|U^{(i)}|\leq\frac{a_{i,u}+c_{i,u}}{2a_{i,l}}.
		\end{align*}
		Secondly, $T^{(i)}$ can be expressed as
		\begin{align*}
		T^{(i)}&=\frac{(3kn^{(i)}T^{(i)}+\rho^{(i)}|U^{(i)}|^2)-|\rho^{(i)}U^{(i)}|^2(\rho^{(i)})^{-1}}{3kn^{(i)}}\\
		&=\frac{m_i\int_{\mathbb{R}^3} |v|^2 f_idv - m_i|\int_{\mathbb{R}^3}vf_idv|^2|(\int_{\mathbb{R}^3}f_idv)^{-1}}{3k\int_{\mathbb{R}^3}f_idv}.
		\end{align*}
		For the upper bound, we see that
		\begin{align*}
		T^{(i)}=\frac{m_i\int_{\mathbb{R}^3} |v|^2 f_idv - m_i|\int_{\mathbb{R}^3}vf_idv|^2|(\int_{\mathbb{R}^3}f_idv)^{-1}}{3k\int_{\mathbb{R}^3}f_idv}
		\leq\frac{m_i\int_{\mathbb{R}^3} |v|^2 f_idv}{3k\int_{\mathbb{R}^3}f_idv}
		\leq\frac{m_ic_{i,u}}{3ka_{i,l}}.
		\end{align*}
		For the lower bound, we have
		\begin{align*}
		T^{(i)}=\frac{m_i(\int_{\mathbb{R}^3}f_idv)(\int_{\mathbb{R}^3} |v|^2 f_idv) - m_i|\int_{\mathbb{R}^3}vf_idv|^2}{3k(\int_{\mathbb{R}^3}f_idv)^2}\geq\frac{m_i\gamma_l}{3ka_{i,u}^2}.
		\end{align*}
	\end{proof}
\end{lemma}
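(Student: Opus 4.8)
The plan is to make the nonlinear quantities $U^{(i)}$ and $T^{(i)}$ explicit in terms of the velocity moments $\int_{\mathbb{R}^3} f_i\,dv$, $\int_{\mathbb{R}^3} v f_i\,dv$ and $\int_{\mathbb{R}^3} |v|^2 f_i\,dv$ of $f_i$, and then substitute the a priori moment bounds assumed throughout this section. The only elementary inputs needed are Young's inequality $|v| \le \tfrac12(1+|v|^2)$ and the Cauchy--Schwarz inequality $\bigl|\int v f_i\,dv\bigr|^2 \le \bigl(\int f_i\,dv\bigr)\bigl(\int |v|^2 f_i\,dv\bigr)$.

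For the bulk velocity I would start from $\rho^{(i)} U^{(i)} = m_i\int v f_i\,dv$ and $\rho^{(i)} = m_i\int f_i\,dv$, which give $|U^{(i)}| = \bigl|\int v f_i\,dv\bigr|\big/\int f_i\,dv$. Bounding the numerator by $\int |v| f_i\,dv \le \tfrac12\int f_i\,dv + \tfrac12\int|v|^2 f_i\,dv \le \tfrac12(a_{i,u}+c_{i,u})$ and the denominator below by $a_{i,l}$ yields the asserted bound on $|U^{(i)}|$.

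For the temperature I would expand $|v-U^{(i)}|^2$ in $3k\rho^{(i)} T^{(i)} = m_i^2\int |v-U^{(i)}|^2 f_i\,dv$ and substitute $U^{(i)} = \int v f_i\,dv/\int f_i\,dv$; the two mean-velocity terms then collapse into a single one and leave
\[
T^{(i)} = \frac{m_i\Bigl[\bigl(\int_{\mathbb{R}^3} f_i\,dv\bigr)\bigl(\int_{\mathbb{R}^3} |v|^2 f_i\,dv\bigr) - \bigl|\int_{\mathbb{R}^3} v f_i\,dv\bigr|^2\Bigr]}{3k\bigl(\int_{\mathbb{R}^3} f_i\,dv\bigr)^2}.
\]
The upper bound follows at once by discarding the nonnegative term $\bigl|\int v f_i\,dv\bigr|^2$ and using $\int|v|^2 f_i\,dv \le c_{i,u}$, $\int f_i\,dv \ge a_{i,l}$. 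For the lower bound I would retain the whole numerator: by the no-vertical-flow hypothesis of Theorems \ref{main} and \ref{main2} --- which is preserved along the solution map because the reactive Maxwellians are then even in $v_2,v_3$ --- one has $\int v_2 f_i\,dv = \int v_3 f_i\,dv = 0$, so $\bigl|\int v f_i\,dv\bigr|^2 = \bigl(\int v_1 f_i\,dv\bigr)^2$ and the bracketed numerator is exactly the quantity assumed to be $\ge \gamma_l$; combined with $\int f_i\,dv \le a_{i,u}$ this gives $T^{(i)} \ge m_i\gamma_l/(3k a_{i,u}^2)$.

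Everything here reduces to direct substitution; the single point that warrants care is recognizing the ``temperature defect'' $\bigl(\int f_i\,dv\bigr)\bigl(\int|v|^2 f_i\,dv\bigr) - \bigl|\int v f_i\,dv\bigr|^2$ that governs $T^{(i)}$ as the a priori lower-bounded quantity, which forces us to invoke the vanishing of the transversal momenta $\int v_2 f_i\,dv$, $\int v_3 f_i\,dv$ in the temperature lower bound.
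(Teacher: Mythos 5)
Your computation follows the paper's proof essentially verbatim: express $U^{(i)}$ and $T^{(i)}$ explicitly through the moments of $f_i$, bound $\bigl|\int v f_i\,dv\bigr|$ by Young's inequality, drop the nonnegative subtracted term for the temperature upper bound, and recognize the numerator $\bigl(\int f_i\,dv\bigr)\bigl(\int|v|^2 f_i\,dv\bigr)-\bigl|\int v f_i\,dv\bigr|^2$ as the a priori lower-bounded quantity for the temperature lower bound.

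The one step that does not hold up as written is your justification of that last identification. You are right that the inequality displayed in the Section 4 preamble (and in Theorems \ref{main} and \ref{main2}) controls only $\bigl(\int f_i\bigr)\bigl(\int|v|^2 f_i\bigr)-\bigl(\int v_1 f_i\bigr)^2$, while the numerator of $T^{(i)}$ subtracts the larger quantity $\bigl|\int v f_i\,dv\bigr|^2\ge\bigl(\int v_1 f_i\,dv\bigr)^2$, so that form of the hypothesis is not literally sufficient. But your repair --- asserting $\int v_2 f_i\,dv=\int v_3 f_i\,dv=0$ because the no-vertical-flow property is ``preserved along the solution map'' --- is not available here: the lemma must hold for every $f$ satisfying the standing moment assumptions, not only for functions in the range of $\Phi$, and in any case the paper never establishes exact vanishing of the transversal momenta of $\Phi(f)$; Lemma \ref{estimate7} only gives $\bigl|\int\phi_i v_j\,dv\bigr|\le C_{l,u}(\ln\tau+1)/\tau$ for $j=2,3$. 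The clean resolution is that condition $(\mathcal{C})$ in the definition of the solution space $\Omega$, which is what Lemma \ref{prop4} actually propagates, is stated with the full momentum vector, $\bigl(\int f_i\,dv\bigr)\bigl(\int|v|^2 f_i\,dv\bigr)-\bigl|\int v f_i\,dv\bigr|^2\ge\gamma_l$; the $v_1$-only form in the preamble should be read as that full-vector inequality. With that reading the lower bound follows by direct substitution, exactly as in the paper, and no symmetry argument is needed.
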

%Global macroscopic paramaters are bounded as below

\subsection{Macroscopic parameters for first model}
\begin{lemma}\label{gm}
	Global macroscopic parameters $U$ and $T$ satisfy
	\begin{align*}
	|U|\leq \underset{1\leq i\leq 4}{\max{}}\bigg\{\frac{a_{i,u}+c_{i,u}}{2a_{i,l}}\bigg\}
	\end{align*}
	and
	\begin{align*}
	T_l\leq T\leq T_u,
	\end{align*}
	where $T_l:=\underset{1\leq i\leq 4}{\min}\bigg\{\frac{m_i\gamma_l}{3ka_{i,u}^2}\bigg\}$ and $T_u:=\frac{c_u}{12ka_l}\sum_{i=1}^4m_i.$%\underset{1\leq i\leq 4}{\max}\bigg\{\frac{m_ic_{i,u}}{3ka_{i,l}}\bigg\}+\underset{1\leq i\leq 4}{\max}\bigg\{\frac{(a_{i,u}+c_{i,u})^2}{3a_{i,l}}\bigg\}\underset{1\leq i\leq 4}{\max}\{a_{i,u}\}$.
	\begin{proof} For the bound of $U$, we estimate as follows:
\[
|U|\leq\frac{1}{\rho}\sum_{i=1}^{4}\rho^{(i)}|U^{(i)}|\leq \max_{1\leq i\leq 4}|U^{(i)}|\leq\underset{1\leq i\leq 4}{\max{}}\bigg\{\frac{a_{i,u}+c_{i,u}}{2a_{i,l}}\bigg\}.
\]
		For the lower bound of $T$, we observe 
\[
\sum_{i=1}^4\rho^{(i)}(|U^{(i)}|^2-|U|^2)=\sum_{i=1}^4\rho^{(i)}(|U^{(i)}-U|^2)\geq0,
\] which implies
\[
T\geq \sum_{i=1}^{4}\frac{n^{(i)}}{n}T^{(i)}\geq \min_{1\leq i\leq 4}T^{(i)}\geq T_l.
\]
For the upper bound of $T$, we estimate as follows:
\[
T\leq\sum_{i=1}^{4}\frac{n^{(i)}}{n}T^{(i)}+\frac{1}{3nk}\sum_{i=1}^{4}\rho^{(i)}|U^{(i)}|^2 =\frac{1}{3nk}\sum_{i=1}^4m_i\int_{\mathbb R^3}|v|^2f_idv\leq \frac{c_u}{12ka_l}\sum_{i=1}^4m_i.
\]
	\end{proof}
\end{lemma}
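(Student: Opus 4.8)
The plan is to read off all three bounds directly from the definitions in \eqref{single} and \eqref{global}, using only the single--component estimates of Lemma \ref{scmp} together with convexity. For $|U|$, since $U=\frac{1}{\rho}\sum_{i=1}^4\rho^{(i)}U^{(i)}$ is a convex combination of the $U^{(i)}$ with weights $\rho^{(i)}/\rho$, the triangle inequality gives $|U|\le\sum_{i=1}^4\frac{\rho^{(i)}}{\rho}|U^{(i)}|\le\max_{1\le i\le 4}|U^{(i)}|$, and Lemma \ref{scmp} closes this part.

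For the lower bound of $T$, the one identity that does the work is the ``center of mass'' relation
\[
\sum_{i=1}^4\rho^{(i)}\big(|U^{(i)}|^2-|U|^2\big)=\sum_{i=1}^4\rho^{(i)}|U^{(i)}-U|^2\ge 0,
\]
obtained by expanding the square and using $\sum_i\rho^{(i)}U^{(i)}=\rho U$. Substituting into the definition of $nkT$ in \eqref{global} gives $nkT\ge\sum_{i=1}^4 n^{(i)}kT^{(i)}$, so $T\ge\sum_{i=1}^4\frac{n^{(i)}}{n}T^{(i)}$ is once more a convex combination of the $T^{(i)}$, hence bounded below by $\min_{1\le i\le4}T^{(i)}\ge T_l$ by Lemma \ref{scmp}.

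For the upper bound of $T$ I would instead discard the favorable term $-\frac{1}{3}\rho|U|^2$ and recombine the single--component pieces into a pure second moment: expanding $|v-U^{(i)}|^2$ in the defining relation for $T^{(i)}$ and using $\int_{\mathbb{R}^3}vf_i\,dv=n^{(i)}U^{(i)}$ yields $3kn^{(i)}T^{(i)}+\rho^{(i)}|U^{(i)}|^2=m_i\int_{\mathbb{R}^3}|v|^2f_i\,dv$, whence
\[
nkT\le\sum_{i=1}^4\Big(n^{(i)}kT^{(i)}+\tfrac{1}{3}\rho^{(i)}|U^{(i)}|^2\Big)=\tfrac{1}{3}\sum_{i=1}^4 m_i\int_{\mathbb{R}^3}|v|^2f_i\,dv\le\tfrac{1}{3}c_u\sum_{i=1}^4 m_i.
\]
Dividing by $nk$ and using the a priori bound $n=\sum_{i=1}^4 n^{(i)}\ge\sum_{i=1}^4 a_{i,l}\ge 4a_l$ gives $T\le\frac{c_u}{12ka_l}\sum_{i=1}^4 m_i=T_u$. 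There is no genuine obstacle here; the only thing needing care is the bookkeeping of the term $|U^{(i)}|^2-|U|^2$ --- it must be \emph{kept} (as a nonnegative quantity) for the lower bound and \emph{dropped} (using its favorable sign) for the upper bound --- together with remembering to invoke $n^{(i)}\ge a_{i,l}$ when turning $1/n$ into an explicit constant.
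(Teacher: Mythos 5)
Your proposal is correct and follows essentially the same route as the paper: the convex-combination bound for $|U|$, the identity $\sum_i\rho^{(i)}(|U^{(i)}|^2-|U|^2)=\sum_i\rho^{(i)}|U^{(i)}-U|^2\ge 0$ for the lower bound of $T$, and dropping the $-|U|^2$ term plus recombining into the pure second moment (with $n\ge 4a_l$) for the upper bound. No discrepancies to report.
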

\begin{lemma}\label{nl} There exists a positive lower bound for $n_i$ depeding only on the quantities given in (\ref{parameters1}), (\ref{parameters2}) and (\ref{parameters3}).
	\begin{proof} We consider only the case with $i=1$. By the definition
		\begin{align*}
		n_1&= n^{(1)}-\frac{1}{\nu_1}\nu_{12}^{34}\frac{2}{\sqrt{\pi}}\Gamma\bigg(\frac{3}{2},\frac{\Delta E}{kT}\bigg)n^{(1)}n^{(2)}\\
		&\quad+\frac{1}{\nu_1}\nu_{12}^{34}\frac{2}{\sqrt{\pi}}\Gamma\bigg(\frac{3}{2},\frac{\Delta E}{kT}\bigg)n^{(3)}n^{(4)}\bigg(\frac{m_1m_2}{m_3m_4}\bigg)^{3/2}e^{\Delta E/kT}.
		\end{align*}
		Since 
\[
\bigg(\sum_{j=1}^{4}\nu_{ij}+\frac{2}{\sqrt{\pi}}\Gamma\bigg(\frac{3}{2},\frac{\Delta E}{kT_u}\bigg)\nu_{12}^{34}\bigg)a_u\geq\nu_1\geq \nu_{12}^{34}\frac{2}{\sqrt{\pi}}\Gamma\bigg(\frac{3}{2},\frac{\Delta E}{kT}\bigg)n^{(2)},
\] we obtain
		\begin{align*}
		n_1&\geq \frac{1}{\nu_1}\nu_{12}^{34}\frac{2}{\sqrt{\pi}}\Gamma\bigg(\frac{3}{2},\frac{\Delta E}{kT}\bigg)n^{(3)}n^{(4)}\bigg(\frac{m_1m_2}{m_3m_4}\bigg)^{3/2}e^{\Delta E/kT}\\
		&\geq \frac{2\nu_{12}^{34}}{\sqrt{\pi}(\sum_{j=1}^{4}\nu_{1j}+\frac{2}{\sqrt{\pi}}\Gamma(\frac{3}{2},\frac{\Delta E}{kT_u})\nu_{12}^{34})a_u}\Gamma\bigg(\frac{3}{2},\frac{\Delta E}{kT_l}\bigg)(a_l)^2\bigg(\frac{m_1m_2}{m_3m_4}\bigg)^{3/2}e^{\Delta E/kT_u}
		\end{align*}
		%The value of last line depends only on the quantities given in (\ref{parameters}.)
	\end{proof}
\end{lemma}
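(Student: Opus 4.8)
The goal is to show that $n_1$ (and, by the symmetric structure of \eqref{B-1}, each $n_i$) is bounded below by a positive constant determined by the data \eqref{parameters1}--\eqref{parameters3}. The strategy is to discard from the three-term expression for $n_1$ the first two terms, whose sign is not a priori controlled, and to keep only the third (manifestly positive) term, then bound that term from below. Concretely, I would first record the exact expansion of $n_1 = n^{(1)} + \tfrac{\lambda_1}{\nu_1}\mathcal S$ obtained by substituting the definition of $\mathcal S$, giving the displayed three-term formula. The positivity of the last term is clear since $n^{(3)}, n^{(4)}>0$, $(\mu^{12}/\mu^{34})^{3/2}>0$, $e^{\Delta E/kT}>0$, and $\Gamma(\tfrac32, \Delta E/kT)>0$; so after dropping the two remaining terms we reduce to bounding
\[
n_1 \;\ge\; \frac{1}{\nu_1}\,\nu_{12}^{34}\,\frac{2}{\sqrt\pi}\,\Gamma\!\Big(\tfrac32,\tfrac{\Delta E}{kT}\Big)\, n^{(3)}n^{(4)}\Big(\tfrac{m_1m_2}{m_3m_4}\Big)^{3/2} e^{\Delta E/kT}
\]
from below.

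The heart of the argument is then a sequence of monotonicity replacements applied to each factor. For the numerator: $n^{(3)}, n^{(4)} \ge a_l$ by the standing hypothesis (since $n^{(i)} = \int f_i\,dv \ge a_{i,l} \ge a_l$); $\Gamma(\tfrac32, s)$ is a decreasing function of $s$, so $\Gamma(\tfrac32, \Delta E/kT) \ge \Gamma(\tfrac32, \Delta E/kT_l)$ using the upper bound $T \le T_u$ from Lemma \ref{gm}; and $e^{\Delta E/kT} \ge e^{\Delta E/kT_u}$ using the lower bound $T \ge T_l$ (assuming $\Delta E > 0$; the threshold sign convention fixes this). For the denominator $\nu_1$, I use the upper bound
\[
\nu_1 = \sum_{j=1}^4 \nu_{1j} n^{(j)} + \frac{2}{\sqrt\pi}\Gamma\!\Big(\tfrac32,\tfrac{\Delta E}{kT}\Big)\nu_{12}^{34} n^{(2)} \;\le\; \Big(\sum_{j=1}^4 \nu_{1j} + \frac{2}{\sqrt\pi}\Gamma\!\Big(\tfrac32,\tfrac{\Delta E}{kT_u}\Big)\nu_{12}^{34}\Big) a_u,
\]
again bounding $n^{(j)} \le a_u$ and monotonicity of $\Gamma$ with $T \ge T_l$ in the chemical term. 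Combining the numerator lower bound with the denominator upper bound yields exactly the displayed final estimate, whose right-hand side depends only on the fixed physical constants, $\nu_{12}^{34}$, and the data quantities $a_l, a_u, T_l, T_u$ — the latter two themselves being functions of \eqref{parameters1}--\eqref{parameters3} via Lemmas \ref{scmp} and \ref{gm}. The cases $i=2,3,4$ follow verbatim after relabeling, using $\lambda_i$ and the relevant index permutation; for $i=3,4$ the roles of the $n^{(1)}n^{(2)}$ and $n^{(3)}n^{(4)}$ terms swap and the exponential factor changes sign of exponent, but the structure — discard uncontrolled terms, keep the positive one, bound each factor by monotonicity — is identical.

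The only mild subtlety, and the place where one must be careful, is the sign of $\Delta E$ and hence the direction of the monotonicity replacements in the factors $\Gamma(\tfrac32, \Delta E/kT)$ and $e^{\Delta E/kT}$: one needs $\Delta E > 0$ (guaranteed by the energy-threshold convention $\Delta E = -\sum \lambda_i E_i$ in the physically relevant regime) so that $s \mapsto \Delta E/kT$ is increasing in $T$ and the incomplete Gamma bound goes the right way, while simultaneously $e^{\Delta E/kT}$ needs the opposite temperature bound. Since we have both $T_l \le T \le T_u$ available from Lemma \ref{gm}, both replacements are legitimate and no circularity arises — all bounds on $T$ were established without reference to $n_i$. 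A secondary point worth stating is that $\mathcal S$ itself (through $\Gamma$ and the exponential) depends on $T$ but not on $n_1$, so the expression for $n_1$ is genuinely an explicit formula in the single-component fields, not an implicit relation; this is what makes the direct estimate possible, in contrast to the implicit $\tilde n_1$ of the second model.
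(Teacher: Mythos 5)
Your overall plan matches the paper's: expand $n_1=n^{(1)}+\frac{\lambda_1}{\nu_1}\mathcal{S}$ into the three-term formula, keep the manifestly positive third term, and bound each of its factors by monotonicity using $a_l\le n^{(i)}\le a_u$ and $T_l\le T\le T_u$ from Lemmas \ref{scmp} and \ref{gm}, together with the upper bound on $\nu_1$. All of those replacements are correct and are exactly what the paper does.

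However, there is a genuine gap at the very first reduction. You ``discard'' the first two terms, ``whose sign is not a priori controlled,'' and conclude
\[
n_1 \;\ge\; \frac{1}{\nu_1}\,\nu_{12}^{34}\,\frac{2}{\sqrt{\pi}}\,\Gamma\Big(\tfrac32,\tfrac{\Delta E}{kT}\Big)\, n^{(3)}n^{(4)}\Big(\tfrac{m_1m_2}{m_3m_4}\Big)^{3/2} e^{\Delta E/kT}.
\]
But the second term, $-\frac{1}{\nu_1}\nu_{12}^{34}\frac{2}{\sqrt{\pi}}\Gamma(\tfrac32,\tfrac{\Delta E}{kT})n^{(1)}n^{(2)}$, is negative, and one cannot drop a negative term when bounding from below unless one first shows it is dominated by the positive term $n^{(1)}$. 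This is precisely the step the paper supplies and your proposal omits: by the definition (\ref{nu fast}), the chemical part of $\nu_1$ is exactly $\frac{2}{\sqrt{\pi}}\Gamma(\tfrac32,\tfrac{\Delta E}{kT})\nu_{12}^{34}n^{(2)}$, so
\[
\nu_1 \;\ge\; \frac{2}{\sqrt{\pi}}\,\Gamma\Big(\tfrac32,\tfrac{\Delta E}{kT}\Big)\nu_{12}^{34}\,n^{(2)},
\qquad\text{hence}\qquad
\frac{1}{\nu_1}\,\nu_{12}^{34}\,\frac{2}{\sqrt{\pi}}\,\Gamma\Big(\tfrac32,\tfrac{\Delta E}{kT}\Big)n^{(1)}n^{(2)} \;\le\; n^{(1)},
\]
which makes the sum of the first two terms nonnegative and legitimizes the reduction. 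You quote only the \emph{upper} bound on $\nu_1$ and never invoke this lower bound, which is the one load-bearing observation in the proof. The same caveat applies to your claim that $i=3,4$ follow ``verbatim after relabeling'': there the negative contribution is the one carrying $n^{(3)}n^{(4)}$ and the factor $(\mu^{12}/\mu^{34})^{3/2}e^{\Delta E/kT}$, and it is absorbed by the matching chemical term in $\nu_3$ (resp.\ $\nu_4$) from (\ref{nu fast}), not by index permutation alone. Once this domination step is inserted, the remainder of your argument is correct and coincides with the paper's.
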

\begin{lemma}\label{ub}
	There exist positive upper bounds for $n_i,|U_i|,$ and $T_i$ depending only on the quantities given in (\ref{parameters1}), (\ref{parameters2}) and (\ref{parameters3}).
	\begin{proof}
		We have 
		\begin{align}\label{su}
		|\mathcal{S}|\leq \nu_{12}^{34}\bigg[\frac{\mu^{12}}{\mu^{34}}e^{\Delta E/kT_l}+1\bigg]a_u^2,
		\end{align}
		and
		\begin{align}\label{nul}
		\nu_i\geq \sum_{j=1}^{4}\nu_{ij}a_l,
		\end{align}
		which directly implies
		\begin{align*}
		n_i\leq a_{i,u}+\frac{1}{\sum_{j=1}^{4}\nu_{ij}a_l}\nu_{12}^{34}\bigg[\frac{\mu^{12}}{\mu^{34}}e^{\Delta E/kT_l}+1\bigg]a_u^2.
		\end{align*}
		For $U_i$, the triangle inequality gives
		\begin{align*}
		|U_i|\leq \frac{n^{(i)}}{n_i}|U^{(i)}|+\frac{2}{m_in_i\nu_i}\sum_{j=1}^{4}\chi_{ij}\mu_{ij}n^{(i)}n^{(j)}(|U^{(j)}|+|U^{(i)}|)+\frac{1}{n_i\nu_i}|U||\mathcal{S}|.
		\end{align*}
		Thus, Lemma \ref{scmp} and \ref{gm},  together with (\ref{su}) and (\ref{nul}) give
		\begin{align*}
		|U_i|\leq C_{l,u}.
		\end{align*}
		And for $T_i$, we have from Lemma \ref{gm} that
		\begin{align*}		T_i&\leq\frac{n^{(i)}}{n_i}T^{(i)}+\frac{m_in^{(i)}}{3kn_i}|U^{(i)}|^2+\frac{4}{n_i\nu_i}\sum_{j=1}^{4}\chi_{ij}\frac{\mu_{ij}}{m_i+m_j}n^{(i)}n^{(j)}T^{(j)}\\
		&\quad +\frac{4}{3kn_i\nu_i}\sum_{j=1}^{4}\chi_{ij}\frac{\mu_{ij}}{m_i+m_j}n^{(i)}n^{(j)}(m_i|U^{(i)}|+m_j|U^{(j)}|)(|U^{(j)}|+|U^{(i)}|)\\
		&\quad +\frac{2}{3kn_i\nu_i}|\mathcal{S}|\bigg[\frac{1}{2}m_i|U|^2+\frac{3}{2}kT+\frac{M-m_i}{M}kT\frac{(\Delta E/kT)^{3/2} e^{-\Delta E/kT}}{\Gamma(\frac{3}{2},\frac{\Delta E}{kT})}+\frac{M-m_i}{M}\Delta E\bigg]\\
		&\leq C_{l,u}.
		\end{align*}
		%where we used the ineqaulity, $\sqrt{x}e^{-x}/\Gamma(\frac{3}{2},x)\leq1$
	\end{proof}
\end{lemma}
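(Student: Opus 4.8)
The plan is to read each of the three reactive coefficients off the nonlinear relations in (\ref{B-1}) and bound it in turn, treating as already-established black boxes the single-component bounds of Lemma \ref{scmp}, the global bounds of Lemma \ref{gm}, and the positive lower bound on $n_i$ of Lemma \ref{nl}. Four elementary ingredients will be used throughout: the incomplete-gamma bound $\frac{2}{\sqrt{\pi}}\Gamma(\frac{3}{2},\frac{\Delta E}{kT})\le \frac{2}{\sqrt{\pi}}\Gamma(\frac{3}{2})=1$; the two-sided density bound $a_l\le n^{(i)}\le a_u$; the lower bound $\nu_i\ge a_l\sum_{j=1}^4\nu_{ij}$, obtained simply by discarding the nonnegative chemical-reaction part of (\ref{nu fast}); and Lemma \ref{nl}, which keeps $n_i$ away from $0$ so that one may safely divide by $m_in_i$ below.

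First I would bound $\mathcal{S}$. Using $T\ge T_l$ from Lemma \ref{gm} and $\Delta E>0$, so that $e^{\Delta E/kT}\le e^{\Delta E/kT_l}$, together with the first two ingredients, one gets $|\mathcal{S}|\le \nu_{12}^{34}\big[\frac{\mu^{12}}{\mu^{34}}e^{\Delta E/kT_l}+1\big]a_u^2=:C_{l,u}$. The bound for $n_i$ is then immediate from $n_i=n^{(i)}+\frac{\lambda_i}{\nu_i}\mathcal{S}$ and $\nu_i\ge a_l\sum_j\nu_{ij}$. For $|U_i|$, I divide the momentum relation in (\ref{B-1}) by $m_in_i$ and apply the triangle inequality to reach
\[
|U_i|\le\frac{n^{(i)}}{n_i}|U^{(i)}|+\frac{2}{m_in_i\nu_i}\sum_{j=1}^4\chi_{ij}\mu_{ij}n^{(i)}n^{(j)}\big(|U^{(j)}|+|U^{(i)}|\big)+\frac{|\mathcal{S}|}{n_i\nu_i}|U|;
\]
every factor on the right is now controlled — $n^{(i)},n^{(j)}\le a_u$, $n_i$ bounded below by Lemma \ref{nl}, $\nu_i$ bounded below, $|U^{(i)}|,|U^{(j)}|,|U|$ by Lemmas \ref{scmp} and \ref{gm}, and $|\mathcal{S}|$ by the previous step — so $|U_i|\le C_{l,u}$.

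For $T_i$, the most delicate of the three, I would start from the energy relation in (\ref{B-1}) and \emph{drop the manifestly nonpositive term} $-\frac{1}{2}m_in_i|U_i|^2$ while retaining $+\frac{1}{2}m_in^{(i)}|U^{(i)}|^2$, turning the identity into an upper estimate for $\frac{3}{2}n_ikT_i$ whose right-hand side involves only $T^{(i)},T^{(j)},|U^{(i)}|,|U^{(j)}|,|U|,T,n^{(i)},n^{(j)},\mathcal{S}$ and the scalar $g(T):=\frac{(\Delta E/kT)^{3/2}e^{-\Delta E/kT}}{\Gamma(\frac{3}{2},\Delta E/kT)}$. Dividing by $\frac{3}{2}n_ik$ and estimating term by term with Lemmas \ref{scmp}, \ref{gm}, \ref{nl}, the lower bound on $\nu_i$, and the bound on $|\mathcal{S}|$, the only remaining points are that $g$ is continuous, hence bounded, on the compact range $[T_l,T_u]$ supplied by Lemma \ref{gm}, and that $-\frac{1-\lambda_i}{2}\frac{M-m_i}{M}\Delta E$ is bounded in absolute value by $\frac{M-m_i}{M}\Delta E$ since $\lambda_i\in\{1,-1\}$. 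This yields $T_i\le C_{l,u}$.

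I do not expect a conceptual obstacle here: the step that genuinely required structural insight — the uniform lower bound on $n_i$ in Lemma \ref{nl}, which alone prevents division by a vanishing quantity in the formulas for $U_i$ and $T_i$ — has already been carried out, so the present lemma reduces to careful bookkeeping of the many cross terms in (\ref{B-1}) together with the monotonicity and continuity in $T$ of the incomplete-gamma factors. The mildly fiddly part will be organizing the $T_i$ estimate so that every sign-indefinite contribution is correctly majorized.
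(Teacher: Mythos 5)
Your proposal is correct and follows essentially the same route as the paper: the same bound $|\mathcal{S}|\le \nu_{12}^{34}\big[\frac{\mu^{12}}{\mu^{34}}e^{\Delta E/kT_l}+1\big]a_u^2$, the same lower bound $\nu_i\ge a_l\sum_j\nu_{ij}$ obtained by discarding the chemical part, the same triangle-inequality estimate for $|U_i|$, and the same device of dropping the nonpositive term $-\tfrac12 m_in_i|U_i|^2$ in the energy relation for $T_i$. Your explicit invocation of Lemma \ref{nl} for the lower bound on $n_i$ is only left implicit in the paper's proof, so if anything your write-up is slightly more careful on that point.
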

\begin{lemma}\label{lb}
	There exists a positive number $\epsilon$ depeding only on the quantities defined in (\ref{parameters1}), (\ref{parameters2}) and (\ref{parameters3}) such that if $\epsilon>\nu_{12}^{34}>0$, then  $T_i$ has a positve lower bound depending only on the quantities given in (\ref{parameters1}), (\ref{parameters2}) and (\ref{parameters3}).
	\begin{remark}
	 The smallness of $\nu_{12}^{34}$ in Theorem \ref{main} comes from this lemma.
	\end{remark}
	\begin{proof}
		We define $I_i$ and $II_i$ $(i=1,2,3)$ by
		\begin{align*}
		\frac{3}{2}n_ikT_i&=\frac{3kn^{(i)}}{2}\bigg(1-\frac{4}{\nu_i}\sum_{j=1}^{4}\chi_{ij}\frac{m_im_j}{(m_i+m_j)^2}n^{(j)}\bigg)T^{(i)}+\frac{6k}{\nu_i}\sum_{j=1}^{4}\chi_{ij}\frac{\mu_{ij}}{m_i+m_j}n^{(i)}n^{(j)}T^{(j)}\\
		&\quad +\frac{\lambda_i}{\nu_i}\mathcal{S}\bigg[ \frac{3}{2}kT+\frac{M-m_i}{M}\Delta E\bigg(\frac{(\Delta E/kT)^{1/2} e^{-\Delta E/kT}}{\Gamma(\frac{3}{2},\frac{\Delta E}{kT})}-\frac{1-\lambda_i}{2}\bigg) \bigg]\\
		&\quad +\frac{2}{\nu_i}\sum_{j=1}^{4}\chi_{ij}\frac{\mu_{ij}}{m_i+m_j}n^{(i)}n^{(j)}(m_iU^{(i)}+m_jU^{(j)})(U^{(j)}-U^{(i)})\\
		&\quad-\frac{1}{2}m_i[n_iU_i^2-n^{(i)}|U^{(i)}|^2]+\frac{\lambda_i}{\nu_i}\mathcal{S}\frac{1}{2}m_i|U|^2\\
		&= I_1+I_2+I_3+II_1+II_2+II_3
		\end{align*}
\noindent (1) Estimate of $I_1+I_2+I_3$: Since
		\begin{align*}
		\nu_i\geq \sum_{j=1}^{4}\nu_{ij}n^{(j)}
		\geq \sum_{j=1}^{4}\chi_{ij}n^{(j)}
		\geq 4\sum_{j=1}^{4}\chi_{ij}\frac{m_im_j}{(m_i+m_j)^2}n^{(j)},
		\end{align*}
		we have $I_1\geq0.$\\
For the estimate of $I_2+I_3$, we observe from  Lemma \ref{gm} that 
\begin{align*}
I_3\geq& -\frac{1}{\nu_i}|\mathcal{S}|\bigg[ \frac{3}{2}kT+\frac{M-m_i}{M}\Delta E\bigg(\frac{(\Delta E/kT)^{1/2} e^{-\Delta E/kT}}{\Gamma(\frac{3}{2},\frac{\Delta E}{kT})}+1\bigg) \bigg]\\
&\geq -\frac{1}{\nu_i}\nu_{12}^{34}\bigg[\frac{\mu^{12}}{\mu^{34}}e^{\Delta E/kT_l}+1\bigg]a_u^2\bigg[ \frac{3}{2}kT_u+\frac{M-m_i}{M}\Delta E\bigg(\frac{(\Delta E/kT_l)^{1/2} e^{-\Delta E/kT_u}}{\Gamma(\frac{3}{2},\frac{\Delta E}{kT_l})}+1\bigg) \bigg],
\end{align*}		
so that,  for sufficiently small $\nu_{12}^{34}$, we have
		\begin{align*}
		&I_2+I_3\cr &\quad\geq\frac{6k}{\nu_i}\sum_{j=1}^{4}\chi_{ij}\frac{\mu_{ij}}{m_i+m_j}(a_l)^2\frac{m_i\gamma_l}{(a_u)^2}\\ &\quad-\frac{1}{\nu_i}\nu_{12}^{34}\bigg[\frac{\mu^{12}}{\mu^{34}}e^{\Delta E/kT_l}+1\bigg]a_u^2\bigg[ \frac{3}{2}kT_u+\frac{M-m_i}{M}\Delta E\bigg(\frac{(\Delta E/kT_l)^{1/2} e^{-\Delta E/kT_u}}{\Gamma(\frac{3}{2},\frac{\Delta E}{kT_l})}+1\bigg) \bigg]\\
		&\quad\geq\frac{3k}{\nu_i}\sum_{j=1}^{4}\chi_{ij}\frac{\mu_{ij}}{m_i+m_j}(a_l)^2\frac{m_i\gamma_l}{(a_u)^2}.
		\end{align*}

\noindent(2) Estimate of $II_1+II_2+II_3$: By straightforward computations, we get
		\begin{align}\label{in view 1}
		\begin{split}
		II_1+II_2+II_3=&\frac{2}{\nu_i}\sum_{j=1}^{4}\chi_{ij}\frac{\mu_{ij}}{m_i+m_j}n^{(i)}n^{(j)}m_j|U^{(j)}-U^{(i)}|^2\\
		&-\frac{m_in_i}{2}|U_i-U^{(i)}|^2+\frac{\lambda^i}{\nu_i}\mathcal{S}\frac{m_i}{2}|U^{(i)}-U|^2.
		\end{split}
		\end{align}
		To estimate this, we note that the second term in  (\ref{in view 1})

		\begin{align}\label{in view 2}
		\begin{split}
		\frac{m_in_i}{2}|U_i-U^{(i)}|^2
		&=\frac{1}{2m_in_i}|m_in_iU_i-m_in_iU^{(i)}|^2\\
&=\frac{1}{2m_in_i}\Big|m_in_iU_i-m_i n^{(i)} U^{(i)}-m_i \frac{\lambda_i}{\nu_i}\mathcal{S} U^{(i)}\Big|^2\\
		&=\frac{1}{2m_in_i}\bigg|m_i\frac{\lambda^i}{\nu_i}\mathcal{S}(U-U^{(i)})+\frac{2}{\nu_i}\sum_{j=1}^{4}\chi_{ij}\mu_{ij}n^{(i)}n^{(j)}(U^{(j)}-U^{(i)})\bigg|^2,
		\end{split}
		\end{align}
where $\eqref{B-1}_1$ and $\eqref{B-1}_2$ were used in the second and the third line.
		In view of (\ref{in view 1}) and (\ref{in view 2}), we define a function $f$ by
		\begin{align*}
		f(t)=&\frac{2}{\nu_i}\sum_{j=1}^{4}\chi_{ij}\frac{\mu_{ij}}{m_i+m_j}n^{(i)}n^{(j)}m_j|U^{(j)}-U^{(i)}|^2+t\frac{m_i}{2}|U^{(i)}-U|^2\\
		&-\frac{1}{2m_i(n^{(i)}+t)}\bigg|m_it(U-U^{(i)})+\underbracket{\frac{2}{\nu_i}\sum_{j=1}^{4}\chi_{ij}\mu_{ij}n^{(i)}n^{(j)}(U^{(j)}-U^{(i)})}_{A}\bigg|^2.
		\end{align*}
		Then we employ Theorem 3.1 in \cite{AAP} to find $f(0)\geq0$. Moreover, by differentiating this function, we get
		\begin{align*}
		f'(t)=&\frac{m_i}{2}|U^{(i)}-U|^2+\frac{1}{2m_i(n^{(i)}+t)^2}\bigg|m_it(U-U^{(i)})+A\bigg|^2\\
		&-\frac{U-U^{(i)}}{(n^{(i)}+t)}\cdot \bigg(m_it(U-U^{(i)})+A\bigg)\\
		=&\frac{1}{2m_i(n^{(i)}+t)^2}|m_in^{(i)}(U-U^{(i)})+A|^2.
		\end{align*}
		Since $|f'(t)|\leq C_{l,u}$ on $-a_{i,l}/2\leq t\leq a_{i,l}/2$, we obtain
		\begin{align*}
		f(t)\geq f(0)-C_{l,u}|t|\geq -C_{l,u}|t|.
		\end{align*}
	 Therefore,
		\begin{align*}
		\frac{3}{2}n_ikT_i&\geq \frac{3k}{\nu_i}\sum_{j=1}^{4}\chi_{ij}\frac{\mu_{ij}}{m_i+m_j}(a_l)^2\frac{m_i\gamma_l}{(a_u)^2}+f\Big(\frac{\lambda^i\mathcal{S}}{\nu_i}\Big)\\
		&\geq \frac{3k}{\nu_i}\sum_{j=1}^{4}\chi_{ij}\frac{\mu_{ij}}{m_i+m_j}(a_l)^2\frac{m_i\gamma_l}{(a_u)^2}-C_{l,u}\bigg|\frac{\lambda^i\mathcal{S}}{\nu_i}\bigg|\\
		&\geq
		\frac{3k}{2\nu_i}\sum_{j=1}^{4}\chi_{ij}\frac{\mu_{ij}}{m_i+m_j}(a_l)^2\frac{m_i\gamma_l}{(a_u)^2},
		\end{align*}
		where we used that (\ref{su}), (\ref{nul}) imply that there exist a positive number $\epsilon$ depending only on the quantities defined in (\ref{parameters1}), (\ref{parameters2}) and (\ref{parameters3}) such that if $\epsilon>\nu_{12}^{34}>0$, then
		\begin{align*}
		\bigg|\frac{\lambda^i\mathcal{S}}{\nu_i}\bigg|\leq\min\bigg\{\frac{a_{i,l}}{2},\frac{3k}{2C_{l,u}\nu_i}\sum_{j=1}^{4}\chi_{ij}\frac{\mu_{ij}}{m_i+m_j}(a_l)^2\frac{m_i\gamma_l}{(a_u)^2}\bigg\}.
		\end{align*} 
		
		Finally, note that if $\epsilon>\nu_{12}^{34}>0$, then for $i=1,2$,
		\begin{align*}
		\nu_{i}&\leq \sum_{j=1}^4 \nu_{1j}a_u+\frac{2}{\sqrt{\pi}}\Gamma\bigg(\frac{3}{2},\frac{\Delta E}{kT_u}\bigg)\nu_{12}^{34}a_u
		\leq C_{l,u},
		\end{align*}
		and for $i=3,4$,
		\begin{align*}
		\nu_i&\leq\sum_{j=1}^4 \nu_{3j}a_u+\frac{2}{\sqrt{\pi}}\Gamma\bigg(\frac{3}{2},\frac{\Delta E}{kT_u}\bigg)\bigg(\frac{\mu^{12}}{\mu^{34}}\bigg)^{3/2}e^{\Delta E/kT_l}\nu_{12}^{34}a_u
		\leq C_{l,u},
		\end{align*}
		which completes the proof.
	\end{proof}
\end{lemma}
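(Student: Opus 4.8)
The plan is to reduce the claim to a positive lower bound for $\tfrac32 n_i k T_i$: since Lemma~\ref{ub} already bounds $n_i$ from above by a constant depending only on the data, such a bound immediately gives a positive lower bound for $T_i$. Starting from the third line of \eqref{B-1}, I would substitute the expression for $m_in_iU_i$ from the second line of \eqref{B-1} and reorganize $\tfrac32 n_i k T_i$ into an elastic part plus a reactive part. The elastic part produces the temperature–transfer terms $I_1=\tfrac{3kn^{(i)}}{2}\bigl(1-\tfrac{4}{\nu_i}\sum_{j=1}^4\chi_{ij}\tfrac{m_im_j}{(m_i+m_j)^2}n^{(j)}\bigr)T^{(i)}$ and $I_2=\tfrac{6k}{\nu_i}\sum_{j=1}^4\chi_{ij}\tfrac{\mu_{ij}}{m_i+m_j}n^{(i)}n^{(j)}T^{(j)}$, a velocity block $II_1+II_2+II_3$ gathering the $|U_i|^2$, the $(m_iU^{(i)}+m_jU^{(j)})\cdot(U^{(j)}-U^{(i)})$, and the $\tfrac{\lambda_i}{\nu_i}\mathcal S\tfrac{m_i}{2}|U|^2$ contributions, and a genuinely reactive remainder $I_3=\tfrac{\lambda_i}{\nu_i}\mathcal S[\cdots]$ carrying the prefactor $\mathcal S=O(\nu_{12}^{34})$.

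For the elastic temperature terms I would note the chain $\nu_i\ge\sum_{j=1}^4\nu_{ij}n^{(j)}\ge\sum_{j=1}^4\chi_{ij}n^{(j)}\ge4\sum_{j=1}^4\chi_{ij}\tfrac{m_im_j}{(m_i+m_j)^2}n^{(j)}$ (the last step from $4m_im_j\le(m_i+m_j)^2$), giving $I_1\ge0$, and bound $I_2\ge\tfrac{3k}{\nu_i}\sum_{j=1}^4\chi_{ij}\tfrac{\mu_{ij}}{m_i+m_j}(a_l)^2\tfrac{m_i\gamma_l}{(a_u)^2}$ via the lower bounds of Lemma~\ref{scmp}. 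The remainder is controlled in absolute value by $\tfrac{1}{\nu_i}|\mathcal S|\,C_{l,u}$, and with $|\mathcal S|\le C_{l,u}\nu_{12}^{34}a_u^2$ together with $\nu_i\ge\sum_{j=1}^4\nu_{ij}a_l$ this is $\le C_{l,u}\nu_{12}^{34}$; for $\nu_{12}^{34}$ small it is absorbed by half of the $I_2$ lower bound, leaving $I_2+I_3\ge\tfrac{3k}{2\nu_i}\sum_{j=1}^4\chi_{ij}\tfrac{\mu_{ij}}{m_i+m_j}(a_l)^2\tfrac{m_i\gamma_l}{(a_u)^2}$.

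The heart of the matter is the velocity block. Using the first two lines of \eqref{B-1}, I would complete the square to write $\tfrac12 m_in_i|U_i-U^{(i)}|^2=\tfrac{1}{2m_in_i}\bigl|m_i\tfrac{\lambda_i}{\nu_i}\mathcal S(U-U^{(i)})+A\bigr|^2$ with $A=\tfrac{2}{\nu_i}\sum_{j=1}^4\chi_{ij}\mu_{ij}n^{(i)}n^{(j)}(U^{(j)}-U^{(i)})$, so that $II_1+II_2+II_3$ equals $f\!\left(\tfrac{\lambda_i\mathcal S}{\nu_i}\right)$ for the scalar function
\begin{align*}
f(t)&=\tfrac{2}{\nu_i}\sum_{j=1}^4\chi_{ij}\tfrac{\mu_{ij}}{m_i+m_j}n^{(i)}n^{(j)}m_j|U^{(j)}-U^{(i)}|^2+t\tfrac{m_i}{2}|U^{(i)}-U|^2\\
&\quad-\tfrac{1}{2m_i(n^{(i)}+t)}\bigl|m_it(U-U^{(i)})+A\bigr|^2.
\end{align*}
The key observations are that $f(0)$ is precisely the non-reactive mixture quantity whose nonnegativity is Theorem~3.1 of \cite{AAP}, so $f(0)\ge0$, and that differentiating kills the cross terms, leaving $f'(t)=\tfrac{1}{2m_i(n^{(i)}+t)^2}|m_in^{(i)}(U-U^{(i)})+A|^2$, which is $\le C_{l,u}$ on $|t|\le a_{i,l}/2$ by Lemmas~\ref{scmp} and~\ref{gm}; hence $f(t)\ge -C_{l,u}|t|$ there.

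Assembling the pieces, I would pick $\epsilon$ so small that $0<\nu_{12}^{34}<\epsilon$ forces $\bigl|\tfrac{\lambda_i\mathcal S}{\nu_i}\bigr|\le\min\{\tfrac{a_{i,l}}{2},\ \tfrac{1}{C_{l,u}}(\text{half the }I_2\text{ lower bound})\}$, which is possible because $|\mathcal S|/\nu_i\le C_{l,u}\nu_{12}^{34}$. Then $\tfrac32 n_ikT_i\ge I_2+I_3+f\!\left(\tfrac{\lambda_i\mathcal S}{\nu_i}\right)\ge\tfrac{3k}{2\nu_i}\sum_{j=1}^4\chi_{ij}\tfrac{\mu_{ij}}{m_i+m_j}(a_l)^2\tfrac{m_i\gamma_l}{(a_u)^2}-C_{l,u}\bigl|\tfrac{\lambda_i\mathcal S}{\nu_i}\bigr|>0$, and finally bounding $\nu_i\le C_{l,u}$ (again using $\nu_{12}^{34}<\epsilon$ and $T\in[T_l,T_u]$ so the chemical contribution to $\nu_i$ stays bounded) together with $n_i\le C_{l,u}$ from Lemma~\ref{ub} turns this into the asserted positive lower bound for $T_i$. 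I expect the only genuinely delicate step to be the identification of $II_1+II_2+II_3$ with $f(\lambda_i\mathcal S/\nu_i)$ and the recognition of $f(0)$ as the elastic-mixture expression covered by \cite{AAP}; the remaining estimates are bookkeeping with the bounds of Lemmas~\ref{scmp}, \ref{gm}, and~\ref{ub}.
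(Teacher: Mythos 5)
Your proposal is correct and follows essentially the same route as the paper's proof: the same decomposition of $\tfrac32 n_ikT_i$ into $I_1+I_2+I_3$ and the velocity block, the same identification of that block with $f(\lambda_i\mathcal S/\nu_i)$, the same appeal to Theorem~3.1 of \cite{AAP} for $f(0)\ge 0$ together with the Lipschitz bound on $f$ near $t=0$, and the same absorption of the $O(\nu_{12}^{34})$ terms into half of the $I_2$ lower bound. The only differences are cosmetic constants (e.g.\ your $I_2$ bound carries a $3k$ where the paper keeps $6k$ before halving), which do not affect the argument.
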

\subsection{Macroscopic parameter for the second model}
\begin{lemma} There exist postive bounds for $|\tilde U|$ and $\tilde \nu_i$ depending only on the quantities given in (\ref{parameters1}), (\ref{parameters2}) and (\ref{parameters3}).
	\begin{proof} We obtain from Lemma \ref{scmp} that
		\[
		|\tilde U| \leq \sum_{i=1}^4 \nu_im_in^{i}|U^{i}|\bigg/ \sum_{i=1}^4\nu_i m_i n^{i}\leq \max_{1\leq i\leq 4}|U^{i}|\leq \underset{1\leq i\leq 4}{\max{}}\bigg\{\frac{a_{i,u}+c_{i,u}}{2a_{i,l}}\bigg\}=:R,
		\]
		and
		\begin{align*}
		&\nu_1^m:=\sum_{j=1}^4 \nu_{1j}a_{j,l}\leq \tilde\nu_1\leq\sum_{j=1}^4 \nu_{1j}a_{i,u}+\bigg(\frac{\mu_{34}}{\mu_{12}}\bigg)^{3/2}\nu_{34}^{12}a_{2,u}=:\nu_1^M,\\
		&\nu_3^m:=\sum_{j=1}^4 \nu_{3j}a_{j,l}\leq\tilde\nu_3\leq\sum_{j=1}^4 \nu_{3j}a_{j,u}+ \nu_{34}^{12}a_{4,u}=:\nu_3^M.
		\end{align*}
		By the same way, we have
		$\nu_2^m\leq\tilde\nu_2\leq\nu_2^M$ and $\nu_4^m\leq\tilde\nu_4\leq\nu_4^M.$
	\end{proof}
\end{lemma}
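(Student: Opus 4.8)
The plan is to read off each claimed bound directly from the defining formulas in \eqref{nu slow} and \eqref{B-2}, substituting the already-established estimates of Lemma \ref{scmp} for the single-component quantities. The point is that $\tilde U$ and $\tilde\nu_i$, unlike $\tilde n_1$ or $\tilde T$, are given by genuinely explicit (non-implicit) formulas, so no nonlinear functional is needed here.

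First I would treat $|\tilde U|$. By definition \eqref{B-2}, $\tilde U = \sum_{i=1}^4 \tilde\nu_i m_i n^{(i)} U^{(i)} \big/ \sum_{i=1}^4 \tilde\nu_i m_i n^{(i)}$ is a convex combination of the vectors $U^{(i)}$ with nonnegative weights $\tilde\nu_i m_i n^{(i)}$ (nonnegativity of $\tilde\nu_i$ is immediate from \eqref{nu slow} since all $\nu_{ij}$, $\nu_{34}^{12}$, $n^{(j)}$ are nonnegative, and the denominator is strictly positive because $n^{(i)}\ge a_{i,l}>0$). Hence $|\tilde U|\le \max_{1\le i\le 4}|U^{(i)}|$, and Lemma \ref{scmp} gives $|\tilde U|\le \max_{1\le i\le 4}\{(a_{i,u}+c_{i,u})/(2a_{i,l})\}=:R$, which depends only on the quantities in \eqref{parameters1}, \eqref{parameters2}.

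Next I would treat each $\tilde\nu_i$. Each of the four formulas in \eqref{nu slow} is a sum of manifestly nonnegative terms, monotone increasing in each $n^{(j)}$ and, for $i=1,2$, also involving the factor $(\mu^{34}/\mu^{12})^{3/2}e^{-\Delta E/kT}$; since $e^{-\Delta E/kT}\le 1$ (as $\Delta E>0$ and $T>0$) this factor is bounded above by $(\mu^{34}/\mu^{12})^{3/2}$, and it is bounded below trivially by $0$ but we only need the elastic part $\sum_j\nu_{1j}n^{(j)}$ for the lower bound. Using $a_{j,l}\le n^{(j)}\le a_{j,u}$ from the standing hypothesis of Section 4, one gets
\[
\nu_1^m:=\sum_{j=1}^4\nu_{1j}a_{j,l}\le\tilde\nu_1\le\sum_{j=1}^4\nu_{1j}a_{j,u}+\Bigl(\frac{\mu_{34}}{\mu_{12}}\Bigr)^{3/2}\nu_{34}^{12}a_{2,u}=:\nu_1^M,
\]
and likewise $\nu_3^m:=\sum_j\nu_{3j}a_{j,l}\le\tilde\nu_3\le\sum_j\nu_{3j}a_{j,u}+\nu_{34}^{12}a_{4,u}=:\nu_3^M$. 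The cases $i=2$ and $i=4$ are identical to $i=1$ and $i=3$ respectively after swapping the roles of $n^{(1)}\leftrightarrow n^{(2)}$ and $n^{(3)}\leftrightarrow n^{(4)}$, giving $\nu_2^m\le\tilde\nu_2\le\nu_2^M$ and $\nu_4^m\le\tilde\nu_4\le\nu_4^M$. All the bounding constants depend only on the physical/chemical constants and the quantities in \eqref{parameters1}, \eqref{parameters2}, \eqref{parameters3}.

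There is no real obstacle here: the only thing to be slightly careful about is the strict positivity of the denominator in the formula for $\tilde U$ and of the lower bounds $\nu_i^m$, which both follow from $n^{(i)}\ge a_{i,l}>0$ together with the fact that at least one elastic collision coefficient $\nu_{ii}$ (or the chemical one) is positive. This is a warm-up lemma whose role is simply to record the elementary two-sided control of $\tilde U$ and $\tilde\nu_i$ that the harder estimates of $\tilde n_1$ and $\tilde T$ (Lemma \ref{lemma48}) will rely on.
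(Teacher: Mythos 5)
Your proposal is correct and follows essentially the same route as the paper: bound $|\tilde U|$ as a weighted average of the $U^{(i)}$ via Lemma \ref{scmp}, and bound each $\tilde\nu_i$ by substituting $a_{j,l}\le n^{(j)}\le a_{j,u}$ into \eqref{nu slow}, using $e^{-\Delta E/kT}\le 1$ to absorb the exponential factor in the chemical term for $i=1,2$. The extra remarks on positivity of the denominator in $\tilde U$ are harmless additions; nothing essential differs.
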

\begin{lemma}\label{lemma48} There exist positive lower and upper  bounds for $\tilde n_1$  depending only on the quantities given in (\ref{parameters1}), (\ref{parameters2}) and (\ref{parameters3}).
	\begin{proof}
We first set 
\[
\bold x=(x_1,x_2,x_3,x_4),\quad \bold y=(y_1,y_2,y_3,y_4),\quad \boldsymbol \mu=(\mu_1,\mu_2,\mu_3,\mu_4),
\]
\[
\boldsymbol \nu=(\nu_1,\nu_2,\nu_3,\nu_4),\quad \boldsymbol \alpha=(\alpha_1,\alpha_2,\alpha_3,\alpha_4),\quad \boldsymbol \beta=(\beta_1,\beta_2,\beta_3,\beta_4).
\]
Then, for each $(\bold x, \bold y, \boldsymbol\mu,\boldsymbol\eta,\boldsymbol\alpha,\boldsymbol\beta)\in (\mathbb{R}_+)^{20}\times\mathbb{R}^4$, we define a map $F_{\bold x, \bold y, \boldsymbol\mu,\boldsymbol\eta,\boldsymbol\alpha,\boldsymbol\beta}:\Omega_{\bold x, \bold y, \boldsymbol\mu,\boldsymbol\eta,\boldsymbol\alpha,\boldsymbol\beta}\rightarrow(-\infty,\infty)$ by
		\begin{align*}
		F_{\bold x, \bold y, \boldsymbol\mu,\boldsymbol\eta,\boldsymbol\alpha,\boldsymbol\beta}(z)
		&= \log\frac{\mu_3 \mu_4}{\eta_2}+\log z+\log(\mu_2x_2+\mu_1z-\eta_1y_1)-\log(\eta_3y_3-\mu_1z+\eta_1y_1)\\
		&-\log(\eta_4y_4-\mu_1z+\eta_1y_1)-\frac{\frac{3}{2}\Delta E\sum\limits_{i=1}^4 \eta_i y_i}{\sum\limits_{i=1}^4\mu_i x_i\Big[\frac{1}{2}m_i(\beta_i^2)+\frac{3}{2}k\alpha_i\Big]+\Delta E(\mu_1z-\eta_1y_1)},
		\end{align*}
		where the domain $\Omega_{\bold x, \bold y, \boldsymbol\mu,\boldsymbol\eta,\boldsymbol\alpha,\boldsymbol\beta}$ is given by
		\begin{align*}
		\Omega_{\bold x, \bold y, \boldsymbol\mu,\boldsymbol\eta,\boldsymbol\alpha,\boldsymbol\beta}=&\{z>0\}\cap\{\mu_2x_2+\mu_1z-\eta_1y_1>0\}\cap\{\eta_3y_3-\mu_1z+\eta_1y_1>0\}\\&\cap\{\eta_4y_4-\mu_1z+\eta_1y_1>0\}\\
		&\cap\bigg\{\sum\limits_{i=1}^4\mu_i x_i\Big[\frac{1}{2}m_i(\beta_i^2)+\frac{3}{2}k\alpha_i\Big]+\Delta E(\mu_1z-\eta_1y_1)>0 \bigg\}.
		\end{align*}
We mention that $\Omega_{\bold x, \bold y, \boldsymbol\mu,\boldsymbol\eta,\boldsymbol\alpha,\boldsymbol\beta}$ is always non-empty since $\eta_1y_1/\mu_1$ always belongs to $\Omega_{\bold x, \bold y, \boldsymbol\mu,\boldsymbol\eta,\boldsymbol\alpha,\boldsymbol\beta}$.
	
		We note that, for each $(\bold x, \bold y, \boldsymbol\mu,\boldsymbol\eta,\boldsymbol\alpha,\boldsymbol\beta)\in (\mathbb{R}_+)^{20}\times\mathbb{R}^4$, $F_{\bold x, \bold y, \boldsymbol\mu,\boldsymbol\eta,\boldsymbol\alpha,\boldsymbol\beta}$ is a strictly increasing surjective function with respect to $z$ on $\Omega_{\bold x, \bold y, \boldsymbol\mu,\boldsymbol\eta,\boldsymbol\alpha,\boldsymbol\beta}$. Also, for fixed $z$, the function $(\bold x, \bold y, \boldsymbol\mu,\boldsymbol\eta,\boldsymbol\alpha,\boldsymbol\beta)\mapsto F_{\bold x, \bold y, \boldsymbol\mu,\boldsymbol\eta,\boldsymbol\alpha,\boldsymbol\beta}(z)$ is decreasing in $x_i,\mu_i,\alpha_i,|\beta_i|$ $(i=1,2,3,4)$, and increasing in $y_i,\eta_i$ $(i=1,2,3,4)$, as long as the function is well-defined.\newline
Now, taking logarithms on both sides of \eqref{B-3} yields (with $x$ replaced by $\tilde n_1$)
\[
F_{\bold n,\bold n,\boldsymbol{\tilde\nu},\boldsymbol{\tilde\nu},\bold T,\bold V}(\tilde n_1)=\frac{3}{2}\log\bigg(\frac{\mu^{12}}{\mu^{34}}\bigg),
\]
where we used the following notations:
\begin{align*}
&\bold n=(n^{(i)})_{i=1,2,3,4},\ \boldsymbol{\tilde\nu}=(\tilde \nu_i)_{i=1,2,3,4},\bold T=(T^{(i)})_{i=1,2,3,4},\bold{V}=(|U^{(i)}-\tilde U|^2)_{i=1,2,3,4}.
\end{align*}		
Therefore, there exists the unique function $G:(\mathbb{R}_+)^{20}\times\mathbb{R}^4\rightarrow\mathbb{R}_+$ satisfying
		\begin{align*}
		G(\bold x, \bold y, \boldsymbol\mu,\boldsymbol\eta,\boldsymbol\alpha,\boldsymbol\beta)=F_{\bold x, \bold y, \boldsymbol\mu,\boldsymbol\eta,\boldsymbol\alpha,\boldsymbol\beta}^{-1}\bigg(\frac{3}{2}\log\bigg(\frac{\mu_{12}}{\mu_{34}}\bigg)\bigg)
		\end{align*}
		and furthermore, $G$ is decreasing in $x_i,\mu_i,\alpha_i,|\beta_i|$ $(i=1,2,3,4)$, and increasing in $y_i,\eta_i$ $(i=1,2,3,4)$.
		Therefore, we obtain
		\begin{align*}
		0<G(\bold a_u,\bold a_l,\boldsymbol{\nu^M},\boldsymbol{\nu^m},T_u\bold1,2R\bold 1)\leq \tilde n_1=G(\bold n,\bold n,\boldsymbol{\tilde\nu},\boldsymbol{\tilde\nu},\bold T,\bold V)\leq G(\bold a_l,\bold a_u,\boldsymbol{\nu^m},\boldsymbol{\nu^M},T_l\bold1,\bold0)
		\end{align*}
		where we used the following notations:
		\begin{align*}
		&\bold a_l=(a_{i,l})_{i=1,2,3,4},\ \bold a_u=(a_{i,u})_{i=1,2,3,4},\ \boldsymbol{\nu^m}=(\nu^m)_{i=1,2,3,4},\ \boldsymbol{\nu^M}=(\nu^M)_{i=1,2,3,4},\\
		&\bold n=(n^{(i)})_{i=1,2,3,4},\ \boldsymbol{\tilde\nu}=(\tilde \nu_i)_{i=1,2,3,4},\bold T=(T^{(i)})_{i=1,2,3,4},\bold 1=(1,1,1,1), \bold0=(0,0,0,0),\\
		&\bold{V}=(|U^{(i)}-\tilde U|^2)_{i=1,2,3,4}.
		\end{align*}
	\end{proof}
\end{lemma}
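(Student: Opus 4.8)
The plan is to realise $\tilde n_1$ as the value, at one specific ``physical'' point of parameter space, of a single globally defined inverse function $G$, and then to use the monotonicity of $G$ in each of its arguments to sandwich $\tilde n_1$ between the values of $G$ at two extremal configurations built out of the a priori bounds already established. The starting point is to take logarithms in the defining relation \eqref{B-3} (equivalently \eqref{B-33}): setting $c:=\tfrac{3}{2}\log(\mu_{12}/\mu_{34})$, a short manipulation rewrites \eqref{B-3} as $F_{\bold n,\bold n,\boldsymbol{\tilde\nu},\boldsymbol{\tilde\nu},\bold T,\bold V}(\tilde n_1)=c$, with $\bold n=(n^{(i)})_{i}$, $\boldsymbol{\tilde\nu}=(\tilde\nu_i)_{i}$, $\bold T=(T^{(i)})_{i}$, $\bold V=(|U^{(i)}-\tilde U|^2)_{i}$, and $F_{\bold x,\bold y,\boldsymbol\mu,\boldsymbol\eta,\boldsymbol\alpha,\boldsymbol\beta}$ the functional displayed just above the lemma. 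The one subtlety here is that $F$ carries $\beta_i^2$ in the denominator of its rational term, whereas $F(x)$ from \eqref{B-30} literally contains $|U^{(i)}|^2-|\tilde U|^2$; the two agree along the physical configuration because the definition of $\tilde U$ in \eqref{B-2} forces $\sum_i\tilde\nu_i m_i n^{(i)}(U^{(i)}-\tilde U)=0$, whence $\sum_i\tilde\nu_i n^{(i)}m_i|U^{(i)}-\tilde U|^2=\sum_i\tilde\nu_i n^{(i)}m_i(|U^{(i)}|^2-|\tilde U|^2)$. This rewriting is exactly what makes the scheme work, since $|U^{(i)}-\tilde U|^2$ is nonnegative and bounded by $4R^2$ (with $R$ as in the previous lemma), whereas $|U^{(i)}|^2-|\tilde U|^2$ carries no fixed sign.

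Next I would record the two structural properties of the family $F$. First, for each fixed tuple $(\bold x,\bold y,\boldsymbol\mu,\boldsymbol\eta,\boldsymbol\alpha,\boldsymbol\beta)\in(\mathbb{R}_+)^{20}\times\mathbb{R}^4$, the map $z\mapsto F_{\bold x,\bold y,\boldsymbol\mu,\boldsymbol\eta,\boldsymbol\alpha,\boldsymbol\beta}(z)$ is smooth and strictly increasing on the interval $\Omega_{\bold x,\bold y,\boldsymbol\mu,\boldsymbol\eta,\boldsymbol\alpha,\boldsymbol\beta}$ cut out by the positivity constraints (which are exactly the constraints imposed on $\tilde n_1$ immediately before the lemma): this is checked term by term, and one also checks that $F\to-\infty$ at the left endpoint and $F\to+\infty$ at the right endpoint of $\Omega$, so $F_{\bold x,\bold y,\boldsymbol\mu,\boldsymbol\eta,\boldsymbol\alpha,\boldsymbol\beta}$ is a bijection of $\Omega_{\bold x,\bold y,\boldsymbol\mu,\boldsymbol\eta,\boldsymbol\alpha,\boldsymbol\beta}$ onto $\mathbb{R}$. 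In particular $F_{\bullet}(z)=c$ has a unique root, which defines $G(\bold x,\bold y,\boldsymbol\mu,\boldsymbol\eta,\boldsymbol\alpha,\boldsymbol\beta):=F_{\bold x,\bold y,\boldsymbol\mu,\boldsymbol\eta,\boldsymbol\alpha,\boldsymbol\beta}^{-1}(c)$ as a map on all of $(\mathbb{R}_+)^{20}\times\mathbb{R}^4$ taking values in $(0,\infty)$; note $\Omega$ is always nonempty, since $z=\eta_1y_1/\mu_1$ satisfies every constraint. Second, for each fixed admissible $z$, each partial derivative of $F$ in the variables $x_i,\mu_i,\alpha_i,|\beta_i|,y_i,\eta_i$ $(i=1,\dots,4)$ keeps a single sign throughout the relevant region, so $F_{\bullet}(z)$ is monotone in each parameter separately; combined with the strict $z$-monotonicity, the identity $F_{p}(G(p))=c$ (implicit function theorem, or an elementary graph-shifting argument) transfers this to $G$, which is then monotone in each argument with the reversed sign.

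With these in hand I would insert the a priori bounds: Lemmas \ref{scmp} and \ref{gm} pinch each physical parameter between explicit constants built from the data of \eqref{parameters1}, \eqref{parameters2}, \eqref{parameters3} — namely $n^{(i)}\in[a_{i,l},a_{i,u}]$, $\tilde\nu_i\in[\nu_i^m,\nu_i^M]$, $T^{(i)}$ between two positive constants, and $0\le|U^{(i)}-\tilde U|^2\le 4R^2$. Since $G$ is globally defined on $(\mathbb{R}_+)^{20}\times\mathbb{R}^4$ and monotone in each argument, evaluating it at the two configurations obtained by pushing every parameter to the endpoint that makes $G$ smallest, respectively largest, yields a lower and an upper bound for $\tilde n_1=G(\bold n,\bold n,\boldsymbol{\tilde\nu},\boldsymbol{\tilde\nu},\bold T,\bold V)$ depending only on the quantities in \eqref{parameters1}, \eqref{parameters2}, \eqref{parameters3} (and the fixed physical constants); the lower bound is automatically strictly positive because $G$ is $(0,\infty)$-valued.

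The step I expect to be the main obstacle is the monotonicity claim: verifying that every one of the roughly two dozen partial derivatives of $F$ keeps a single sign over the whole of $\Omega_{\bold x,\bold y,\boldsymbol\mu,\boldsymbol\eta,\boldsymbol\alpha,\boldsymbol\beta}$ — a region that itself moves with the parameters — and in particular that the rational last term of $F$ (whose numerator and denominator both involve the $\eta_iy_i$, and whose denominator couples $x_i,\mu_i,\alpha_i,\beta_i$ with the quantity $\mu_1z-\eta_1y_1$) does not destroy the monotonicity produced by the four logarithmic terms. A smaller point to be careful about is that the extremal tuples used in the sandwich are admissible inputs for $G$ and that the resulting lower bound is genuinely positive; both follow from $F_{\bullet}$ being a surjection onto $\mathbb{R}$ on the nonempty interval $\Omega$.
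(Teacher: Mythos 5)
Your proposal follows essentially the same route as the paper: take logarithms of \eqref{B-3}, package the result as $F_{\bold n,\bold n,\boldsymbol{\tilde\nu},\boldsymbol{\tilde\nu},\bold T,\bold V}(\tilde n_1)=\tfrac{3}{2}\log(\mu^{12}/\mu^{34})$, invert the strictly increasing surjective $z$-dependence to define $G$, and sandwich $\tilde n_1$ using the monotonicity of $G$ in each parameter together with the a priori bounds from Lemmas \ref{scmp} and \ref{gm}. Your explicit justification that $\sum_i\tilde\nu_i m_i n^{(i)}(|U^{(i)}|^2-|\tilde U|^2)=\sum_i\tilde\nu_i m_i n^{(i)}|U^{(i)}-\tilde U|^2$ (via the definition of $\tilde U$ in \eqref{B-2}) correctly supplies a step the paper leaves implicit when passing from \eqref{B-30} to the $\beta_i^2$ form of $F$.
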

\begin{corollary}
	There exist positive  lower and upper bounds for $\tilde n_2$  depending only on the quantities given in (\ref{parameters1}), (\ref{parameters2}) and (\ref{parameters3}).
	\begin{proof}
		By the definition (\ref{B-2}), we know $\tilde\nu_2(\tilde n_2-n^{(2)})=\tilde\nu_1(\tilde n_1-n^{(1)})$, which implies the following equality:
		\begin{align*}
		\frac{\tilde\nu_3\tilde\nu_4}{\tilde\nu_1\tilde\nu_2}\frac{\tilde\nu_2\tilde n_{2}[ \tilde\nu_1n^{(1)}+\tilde\nu_2(\tilde n_{2}- n^{(2)})]}{[\tilde\nu_3n^{(3)}-\tilde\nu_2(\tilde n_{2}-n^{(2)})][ \tilde\nu_4 n^{4}-\tilde\nu_2( \tilde n_{2}- n^{(2)})]}\exp\left(-\frac{\Delta E}{k\tilde T(\tilde n_{2})}\right)=\left(\frac{\mu^{12}}{\mu^{34}}\right)^{3/2}
		\end{align*}
		and
		\begin{align*}
		\tilde T(\tilde n_2)=\bigg\{\sum_{i=1}^4\tilde{\nu}_i n^{(i)}\Big[\frac{1}{2}m_i(|U^{(i)}|^2-|\tilde U|^2)+\frac{3}{2}kT^{(i)}\Big]+\Delta E\tilde\nu_2(\tilde n_{2}-n^{2})\bigg\}\bigg/\bigg(\frac{3}{2}k\sum_{i=1}^4 \tilde\nu_i n^{(i)}\bigg).
		\end{align*}
		Repeating the argument used in Lemma \ref{lemma48}, we obtain the desired result.
	\end{proof}
\end{corollary}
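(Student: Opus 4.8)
The plan is to avoid re-solving anything from scratch and instead piggyback on the bound for $\tilde n_1$ already obtained in Lemma \ref{lemma48}, by exploiting the affine relation between $\tilde n_1$ and $\tilde n_2$. From the definition \eqref{B-2} with $\lambda_2=1$ one has $\tilde\nu_2(\tilde n_2-n^{(2)})=\tilde\nu_1(\tilde n_1-n^{(1)})$, hence $\tilde n_1-n^{(1)}=\frac{\tilde\nu_2}{\tilde\nu_1}(\tilde n_2-n^{(2)})$. Substituting this into the defining equation \eqref{B-33} and into $\tilde T=F(\tilde n_1)$ from \eqref{Temp} recasts the equation for $\tilde n_1$ as the rational‑times‑exponential identity displayed in the corollary's statement — the same identity, but with the indices $1$ and $2$ interchanged in the relevant slots and with the temperature now written as an explicit function $\tilde T(\tilde n_2)$ of $\tilde n_2$.

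First I would check that this rewritten equation is an instance of exactly the same framework as Lemma \ref{lemma48}: taking logarithms, its left‑hand side is $F_{\bold x,\bold y,\boldsymbol\mu,\boldsymbol\eta,\boldsymbol\alpha,\boldsymbol\beta}(\tilde n_2)$ for the same family of functionals, only with the arguments permuted so that the roles of $(n^{(1)},\tilde\nu_1)$ and $(n^{(2)},\tilde\nu_2)$ are exchanged. The admissible domain (positivity of the four densities $\tilde n_i$ and of the numerator of $\tilde T$) is literally the same set, merely expressed through $\tilde n_2$; and $\tilde n_2$ automatically lies in it because $\tilde n_1$ does and the map $\tilde n_1\mapsto\tilde n_2$ of \eqref{B-2} carries one region onto the other. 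Consequently the monotonicity of $F_{\bold x,\bold y,\boldsymbol\mu,\boldsymbol\eta,\boldsymbol\alpha,\boldsymbol\beta}(z)$ in $x_i,\mu_i,\alpha_i,|\beta_i|$ and in $y_i,\eta_i$, and the monotonicity of its inverse $G$, established in Lemma \ref{lemma48}, carry over verbatim to the permuted arguments.

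Then I would simply rerun the sandwich step: write $\tilde n_2=G(\bold n,\bold n,\boldsymbol{\tilde\nu},\boldsymbol{\tilde\nu},\bold T,\bold V)$ with the permuted arguments, and bound it above and below by replacing each argument with its extreme admissible value — $a_{i,l}$ or $a_{i,u}$ for the densities, $\nu_i^m$ or $\nu_i^M$ for the collision frequencies, $T_l$ or $T_u$ for the temperatures, $0$ or $2R$ for the velocity terms $|U^{(i)}-\tilde U|^2$ — invoking Lemma \ref{scmp} and the preceding lemma of this subsection for the validity of these extreme values, together with the monotonicity of $G$. Since all these extremizers depend only on the quantities in \eqref{parameters1}, \eqref{parameters2}, \eqref{parameters3}, the resulting two‑sided bound $0<G(\cdots)\le\tilde n_2\le G(\cdots)$ does as well.

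The only delicate point — the \emph{hard part} — is the first step: carefully carrying out the substitution $\tilde n_1-n^{(1)}=\frac{\tilde\nu_2}{\tilde\nu_1}(\tilde n_2-n^{(2)})$ in \eqref{B-33} and \eqref{Temp} and verifying that it really does produce an instance of the same functional $F$, with the correct permutation of arguments and with the monotonicity directions intact, so that Lemma \ref{lemma48} can be used as a black box rather than re‑proved. Once this structural identification is in place, the remainder is a mechanical repetition of the proof of Lemma \ref{lemma48}. The same argument, now with the sign‑flipped relation $\tilde\nu_i(\tilde n_i-n^{(i)})=-\tilde\nu_1(\tilde n_1-n^{(1)})$ for $i=3,4$, likewise yields bounds for $\tilde n_3$ and $\tilde n_4$.
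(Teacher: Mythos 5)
Your proposal is correct and follows essentially the same route as the paper: substitute $\tilde\nu_2(\tilde n_2-n^{(2)})=\tilde\nu_1(\tilde n_1-n^{(1)})$ into \eqref{B-33} and \eqref{Temp} to obtain the index-swapped version of the defining equation, then rerun the monotone-functional sandwich of Lemma \ref{lemma48}. The paper's proof is exactly this, stated tersely as ``repeating the argument used in Lemma \ref{lemma48}''; your write-up merely makes the permutation of arguments and the reuse of $G$'s monotonicity explicit.
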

\begin{corollary} There exist positive lower and upper  bounds for $\tilde n_3$ and $\tilde n_4$  depending only on the quantities given in (\ref{parameters1}), (\ref{parameters2}) and (\ref{parameters3}).
	\begin{proof}
		We rewrite  (\ref{B-2}) as 
		\begin{align}\label{note that}
		\tilde\nu_3(\tilde n_3-n^{(3)})=-\tilde\nu_1(\tilde n_1-n^{(1)})
		\end{align}
		and plug this into  (\ref{B-33}) to get
		\begin{align*}
		\frac{\tilde\nu_3\tilde\nu_4}{\tilde\nu_1\tilde\nu_2}\frac{[\tilde\nu_1n^{(1)}-\tilde\nu_3(\tilde n_{3}- n^{(3)})][\tilde\nu_2n^{(2)}-\tilde\nu_3(\tilde n_{3}- n^{(3)})]}{\tilde\nu_3\tilde n_3[\tilde\nu_4n^{(4)}+\tilde\nu_3(\tilde n_{3}- n^{(3)})]}\exp\left(-\frac{\Delta E}{k\tilde T(\tilde n_{3})}\right)=\left(\frac{\mu^{12}}{\mu^{34}}\right)^{3/2},
		\end{align*}
	where $\tilde T(\tilde n_3)$ is defined by
		\begin{align*}
		\tilde T(\tilde n_3):=\bigg\{\sum_{i=1}^4\tilde{\nu}_i n^{(i)}\Big[\frac{1}{2}m_i(|U^{(i)}|^2-|\tilde U|^2)+\frac{3}{2}kT^{(i)}\Big]-\Delta E\tilde\nu_3(\tilde n_{3}-n^{3})\bigg\}\bigg/\bigg(\frac{3}{2}k\sum_{i=1}^4 \tilde\nu_i n^{(i)}\bigg).
		\end{align*}
	Note that 	$\tilde T(\tilde n_3)$ is obtained by inserting (\ref{note that}) into (\ref{Temp}).
	 
		Now, for each $(\bold x, \bold y, \boldsymbol\mu,\boldsymbol\eta,\boldsymbol\alpha,\boldsymbol\beta)\in (\mathbb{R}_+)^{20}\times\mathbb{R}^4$, we define a map $H_{\bold x, \bold y, \boldsymbol\mu,\boldsymbol\eta,\boldsymbol\alpha,\boldsymbol\beta}:\Omega'_{\bold x, \bold y, \boldsymbol\mu,\boldsymbol\eta,\boldsymbol\alpha,\boldsymbol\beta}\rightarrow(-\infty,\infty)$ by
		\begin{align*}
		H_{\bold x, \bold y, \boldsymbol\mu,\boldsymbol\eta,\boldsymbol\alpha,\boldsymbol\beta}(z)
		&= \log\frac{ \mu_4}{\eta_1 \eta_2}-\log z+\log(\mu_1x_1-\eta_3z+\mu_3x_3)+\log(\mu_2x_2-\eta_3z+\mu_3x_3)\\
		&-\log(\eta_4y_4+\eta_3z-\mu_3x_3)-\frac{\frac{3}{2}\Delta E\sum\limits_{i=1}^4 \eta_i y_i}{\sum\limits_{i=1}^4\mu_i x_i\Big[\frac{1}{2}m_i(\beta_i^2)+\frac{3}{2}k\alpha_i\Big]-\Delta E(\eta_3z-\mu_3x_3)},
		\end{align*}
		where 
		\begin{align*}
		\Omega'_{\bold x, \bold y, \boldsymbol\mu,\boldsymbol\eta,\boldsymbol\alpha,\boldsymbol\beta}=&\{z>0\}\cap\{\mu_1x_1-\eta_3z+\mu_3x_3>0\}\cap\{\mu_2x_2-\eta_3z+\mu_3x_3>0\}\\&\cap\{\eta_4y_4+\eta_3z-\mu_3x_3>0\}\\&\cap
		\bigg\{\sum\limits_{i=1}^4\mu_i x_i\Big[\frac{1}{2}m_i(\beta_i^2)+\frac{3}{2}k\alpha_i\Big]-\Delta E(\eta_3z-\mu_3x_3)>0\bigg\}.
		\end{align*}
		For each $(\bold x, \bold y, \boldsymbol\mu,\boldsymbol\eta,\boldsymbol\alpha,\boldsymbol\beta)\in (\mathbb{R}_+)^{20}\times\mathbb{R}^4$, $H_{\bold x, \bold y, \boldsymbol\mu,\boldsymbol\eta,\boldsymbol\alpha,\boldsymbol\beta}$ is a strictly decreasing surjective function on $\Omega_{\bold x, \bold y, \boldsymbol\mu,\boldsymbol\eta,\boldsymbol\alpha,\boldsymbol\beta}$. Hence there exists the unique function $J:(\mathbb{R}_+)^{20}\times\mathbb{R}^4\rightarrow\mathbb{R}_+$ satisfying
		\begin{align*}
		J(\bold x, \bold y, \boldsymbol\mu,\boldsymbol\eta,\boldsymbol\alpha,\boldsymbol\beta)=H_{\bold x, \bold y, \boldsymbol\mu,\boldsymbol\eta,\boldsymbol\alpha,\boldsymbol\beta}^{-1}\bigg(\frac{3}{2}\log\bigg(\frac{\mu_{12}}{\mu_{34}}\bigg)\bigg).
		\end{align*}
		As in the proof of Lemma \ref{lemma48}, $J$ is increasing in $x_i,\mu_i,\alpha_i,|\beta_i|$ $(i=1,2,3,4)$, and decreasing in $y_i,\eta_i$ $(i=1,2,3,4)$.  Hence the following inequality holds:
\begin{align*}
		0<J(\bold a_l,\bold a_u,\boldsymbol{\nu^m},\boldsymbol{\nu^M},T_u\bold1,2R\bold 1)\leq \tilde n_3\leq J(\bold a_u,\bold a_l,\boldsymbol{\nu^M},\boldsymbol{\nu^m},T_l\bold1,\bold0).
		\end{align*}
	
		The proof for $\tilde n_4$ is similar. We omit it.
	\end{proof}
\end{corollary}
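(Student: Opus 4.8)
The plan is to reduce the statement to the monotone inverse-function machinery already developed for $\tilde n_1$ in Lemma~\ref{lemma48}. Since $\lambda_3=-1$, the $i=3$ relation in \eqref{B-2} reads $\tilde\nu_3(\tilde n_3-n^{(3)})=-\tilde\nu_1(\tilde n_1-n^{(1)})$. Substituting $\tilde\nu_1(\tilde n_1-n^{(1)})=-\tilde\nu_3(\tilde n_3-n^{(3)})$ into the four linear factors of \eqref{B-33} and into \eqref{Temp} turns the defining relation for the reactive parameters into a single scalar equation for $\tilde n_3$ alone, in which the temperature functional reads
\[
\tilde T(\tilde n_3)=\frac{\sum_{i=1}^4\tilde\nu_i n^{(i)}\big[\tfrac12 m_i(|U^{(i)}|^2-|\tilde U|^2)+\tfrac32 kT^{(i)}\big]-\Delta E\,\tilde\nu_3(\tilde n_3-n^{(3)})}{\tfrac32 k\sum_{i=1}^4\tilde\nu_i n^{(i)}}.
\]
Taking logarithms puts this equation in the form $H_{\bold n,\bold n,\boldsymbol{\tilde\nu},\boldsymbol{\tilde\nu},\bold T,\bold V}(\tilde n_3)=\tfrac32\log(\mu^{12}/\mu^{34})$, where $H_{\bold x,\bold y,\boldsymbol\mu,\boldsymbol\eta,\boldsymbol\alpha,\boldsymbol\beta}$ is a parametrized family built in exact analogy with $F$ from Lemma~\ref{lemma48} on the natural positivity domain $\Omega'$, and where $\bold n=(n^{(i)})$, $\boldsymbol{\tilde\nu}=(\tilde\nu_i)$, $\bold T=(T^{(i)})$, $\bold V=(|U^{(i)}-\tilde U|^2)$.

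Next I would verify the two structural inputs. First, for fixed parameters the map $z\mapsto H(z)$ is strictly monotone on $\Omega'$, and it is decreasing: relative to $F$ the unknown $\tilde n_3$ now sits in ``denominator'' positions, so that there is a $-\log z$ term, the factors coming from $\tilde\nu_1 n^{(1)}-\tilde\nu_3(\tilde n_3-n^{(3)})$ and $\tilde\nu_2 n^{(2)}-\tilde\nu_3(\tilde n_3-n^{(3)})$ are decreasing, the factor $\tilde\nu_4 n^{(4)}+\tilde\nu_3(\tilde n_3-n^{(3)})$ appears inside a $-\log$, and $\tilde T(\tilde n_3)$ is monotone in $\tilde n_3$; moreover $H$ is surjective onto $(-\infty,\infty)$ because one of these terms diverges at each end of $\Omega'$. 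This yields a well-defined inverse $J:=H^{-1}\big(\tfrac32\log(\mu^{12}/\mu^{34})\big)$, and $\Omega'$ is nonempty because $z=n^{(3)}$, for which $\tilde\nu_3(\tilde n_3-n^{(3)})=0$, always lies in it. Second, for fixed $z$ the map $(\bold x,\bold y,\boldsymbol\mu,\boldsymbol\eta,\boldsymbol\alpha,\boldsymbol\beta)\mapsto H(z)$ is monotone in each of the coordinates $x_i,\mu_i,\alpha_i,|\beta_i|,y_i,\eta_i$, checked term by term; combined with the strict monotonicity in $z$ and the implicit function theorem, $J$ is then monotone in each coordinate, with directions inherited from $H$.

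Finally I would identify $\tilde n_3=J(\bold n,\bold n,\boldsymbol{\tilde\nu},\boldsymbol{\tilde\nu},\bold T,\bold V)$ and feed into the monotone function $J$ the a priori bounds $a_{i,l}\le n^{(i)}\le a_{i,u}$ and $T_l\le T^{(i)}\le T_u$ from Lemma~\ref{scmp} and Lemma~\ref{gm}, together with $\nu_i^m\le\tilde\nu_i\le\nu_i^M$ and $0\le|U^{(i)}-\tilde U|^2\le(2R)^2$ from the preceding lemma. Evaluating $J$ at the matching extremal parameter vectors then sandwiches $\tilde n_3$ strictly between two positive constants depending only on the quantities of \eqref{parameters1}, \eqref{parameters2} and \eqref{parameters3}, yielding a chain of the form
\[
0<J(\bold a_l,\bold a_u,\boldsymbol{\nu^m},\boldsymbol{\nu^M},T_u\bold 1,2R\bold 1)\le\tilde n_3\le J(\bold a_u,\bold a_l,\boldsymbol{\nu^M},\boldsymbol{\nu^m},T_l\bold 1,\bold 0);
\]
the bound for $\tilde n_4$ follows by the identical argument with the index $3$ replaced by $4$, starting from $\tilde\nu_4(\tilde n_4-n^{(4)})=-\tilde\nu_1(\tilde n_1-n^{(1)})$. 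I expect the main obstacle to be the sign bookkeeping in the second structural input: correctly tracking the monotonicity of $H$, and hence of $J$, in every one of the parameters --- in particular in the temperature variables $\alpha_i$ and the drift variables $|\beta_i|$, which enter only through the exponential factor and whose direction hinges on the sign of $\Delta E$ --- and making sure the extremal parameter vectors used in the final sandwich still lie in the slice of $\Omega'$ on which $J$ is defined.
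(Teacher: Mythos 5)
Your proposal is correct and follows essentially the same route as the paper: use $\lambda_3=-1$ in \eqref{B-2} to write $\tilde\nu_3(\tilde n_3-n^{(3)})=-\tilde\nu_1(\tilde n_1-n^{(1)})$, substitute into \eqref{B-33} and \eqref{Temp} to obtain a scalar equation for $\tilde n_3$, and then run the monotone-inverse argument of Lemma \ref{lemma48} with a strictly \emph{decreasing} parametrized family $H$ in place of $F$, feeding the a priori bounds on $n^{(i)}$, $\tilde\nu_i$, $T^{(i)}$ and $|U^{(i)}-\tilde U|$ into the resulting inverse $J$ to sandwich $\tilde n_3$; the case of $\tilde n_4$ is identical. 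The sign bookkeeping you flag as the main obstacle is exactly the point the paper handles by noting that $J$ inherits the \emph{reversed} monotonicity directions from $G$ because $H$ is decreasing in $z$.
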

\begin{lemma}
	There exist positive lower and upper bounds for $\tilde T$  depending only on the quantities given in (\ref{parameters1}), (\ref{parameters2}) and (\ref{parameters3}).
	\begin{proof}
		From $\eqref{Temp}$ we get the following relation:
			\begin{align}\label{TA-B}
		&\tilde T\times A-B=\tilde\nu_1(\tilde n_{1}-n^{1}),
		\end{align}
		where
		\begin{align*}
		A&=\frac{3}{2}k\sum_{i=1}^4 \tilde\nu_i n^{(i)}/\Delta E,\cr
		B&=\sum_{i=1}^4\tilde{\nu}_i n^{(i)}\Big[\frac{1}{2}m_i(|U^{(i)}-\tilde U|^2)+\frac{3}{2}kT^{(i)}\Big]/\Delta E.
		\end{align*}
		Plugging (\ref{TA-B}) into (\ref{B-33}), we have
		\begin{align*}
		\frac{\tilde\nu_3\tilde\nu_4}{\tilde\nu_1\tilde\nu_2}\frac{[\tilde\nu_1n^{(1)}+\tilde T\times A-B][\tilde\nu_2n^{(2)}+\tilde T\times A-B]}{[\tilde\nu_3n^{(3)}-\tilde T\times A+B][\tilde\nu_4n^{(4)}-\tilde T\times A+B]}\exp{\bigg(-\frac{\Delta E}{k\tilde T}\bigg)}=\bigg(\frac{\mu_{12}}{\mu_{34}}\bigg)^{3/2}.
		\end{align*}
		For each $(x_1,x_2,y_3,y_4,\boldsymbol{\mu},\boldsymbol{\eta},a, b)\in( \mathbb R_+)^{14}$, we define a strictly increasing function $K:\Lambda_{x_1,x_2,y_3,y_4,\boldsymbol{\mu},\boldsymbol{\eta},a, b}\rightarrow(0,\infty)$ by
		\begin{align*}
		K_{x_1,x_2,y_3,y_4,\boldsymbol{\mu},\boldsymbol{\eta},a, b}(z):=\frac{\mu_3\mu_4}{\eta_1\eta_2}\frac{(\mu_1x_1+az-b)(\mu_2x_2+az-b)}{(\eta_3y_{3}-az+b)(\eta_4y_{4}-az+b)}\exp{\bigg(-\frac{\Delta E}{k  z}\bigg)}
		\end{align*}
		where 
		\begin{align*}
		\Lambda_{\boldsymbol{\mu},\boldsymbol{\nu},\boldsymbol{\alpha},\boldsymbol{\beta},a, b}=&\{x>0\}\cap\{\mu_1x_1+az-b\}\cap\{\mu_2x_2+az-b\}\cap\{\eta_3y_{3}-az+b\}\\
		&\cap\{\eta_4y_{4}-az+b\}.
		\end{align*}
Note that there exist positive constants $A_l,A_u,B_l,B_u$ depending only on the parameters given in (\ref{parameters1}), (\ref{parameters2}) and (\ref{parameters3}) such that $A_l\leq A\leq A_u$ and $B_l\leq B\leq B_u$.
		In the same way as in the proof of Lemma \ref{lemma48}, we have the following inequality:
\begin{align*}
		0<K_{a_u,a_u,a_l,a_l,\boldsymbol{\nu^M},\boldsymbol{\nu^m},A_u,B_l}^{-1}\bigg(\Big(\frac{\mu_{12}}{\mu_{34}}\Big)^{3/2}\bigg)\leq \tilde T\leq K_{a_l,a_l,a_u,a_u,\boldsymbol{\nu^m},\boldsymbol{\nu^M},A_l,B_u}^{-1}\bigg(\Big(\frac{\mu_{12}}{\mu_{34}}\Big)^{3/2}\bigg).
		\end{align*}
	\end{proof}
\end{lemma}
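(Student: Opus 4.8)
The plan is to collapse the implicit relations that determine $\tilde T$ into a single scalar equation in $\tilde T$ alone, and then run the same monotone–inversion scheme used for $\tilde n_1$ in Lemma \ref{lemma48}. First I would observe that \eqref{Temp} is affine in the ``reaction extent'' $\tilde\nu_1(\tilde n_1-n^{(1)})$; solving for it gives the identity \eqref{TA-B}, namely $\tilde\nu_1(\tilde n_1-n^{(1)})=\tilde T A-B$, where $A$ and $B$ depend only on the single–component and global macroscopic fields and on the collision frequencies $\tilde\nu_i$ — quantities that do not themselves involve $\tilde T$. Substituting this expression for $\tilde\nu_1(\tilde n_1-n^{(1)})$ into the defining relation \eqref{B-33} for $\tilde n_1$ eliminates $\tilde n_1$ completely and produces a closed equation for $\tilde T$ of the form $K_{\mathbf p}(\tilde T)=\big(\mu^{12}/\mu^{34}\big)^{3/2}$, where $\mathbf p=(n^{(1)},n^{(2)},n^{(3)},n^{(4)},\boldsymbol{\tilde\nu},\boldsymbol{\tilde\nu},A,B)$ and $K$ is the explicit function displayed in the statement.

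Next I would check that for every admissible $\mathbf p$ the map $K_{\mathbf p}$ is a strictly increasing bijection from its domain $\Lambda_{\mathbf p}$ onto $(0,\infty)$. This is immediate from the product structure: as $z$ grows inside $\Lambda_{\mathbf p}$ the numerator factors $\mu_1x_1+az-b$ and $\mu_2x_2+az-b$ increase, the denominator factors $\eta_3y_3-az+b$ and $\eta_4y_4-az+b$ decrease, and $e^{-\Delta E/(kz)}$ increases, so $K_{\mathbf p}$ is a product of positive strictly increasing functions; surjectivity onto $(0,\infty)$ follows by inspecting the two endpoints of the interval $\Lambda_{\mathbf p}$, where $z$ or a numerator factor tends to $0$ at one end and a denominator factor tends to $0$ at the other. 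Hence $\tilde T$ is the unique root, $\tilde T=K_{\mathbf p}^{-1}\big((\mu^{12}/\mu^{34})^{3/2}\big)$; moreover $\tilde T$ genuinely lies in $\Lambda_{\mathbf p}$, because via \eqref{TA-B} the positivity constraints $\tilde n_1,\tilde n_2,\tilde n_3,\tilde n_4>0$ and $\tilde T>0$ that select $\tilde n_1$ are precisely the inequalities cutting out $\Lambda_{\mathbf p}$. To turn this identity into uniform bounds, I would then invoke Lemma \ref{scmp}, the already–established bounds $|\tilde U|\le R$ and $\nu_i^m\le\tilde\nu_i\le\nu_i^M$, and $a_{i,l}\le n^{(i)}\le a_{i,u}$ to conclude that $A$ and $B$ lie between positive constants $A_l\le A\le A_u$ and $B_l\le B\le B_u$ depending only on the quantities in \eqref{parameters1}, \eqref{parameters2}, \eqref{parameters3}.

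Finally, exactly as in Lemma \ref{lemma48}, I would record the monotonicity of $K_{\mathbf p}(z)$ in the parameters for $z$ fixed — read off termwise: $K$ is increasing in $x_1,x_2,\mu_1,\mu_2,\mu_3,\mu_4$ and in $a$, and decreasing in $y_3,y_4,\eta_1,\eta_2,\eta_3,\eta_4$ and in $b$, wherever defined — treating the entries of $\boldsymbol\mu$ and $\boldsymbol\eta$ as independent variables even though they coincide with $\boldsymbol{\tilde\nu}$ in the application. Since $K_{\mathbf p}$ is increasing in $z$, the root $K_{\mathbf p}^{-1}\big((\mu^{12}/\mu^{34})^{3/2}\big)$ is \emph{decreasing} in each parameter in which $K$ is increasing, and \emph{increasing} in each parameter in which $K$ is decreasing. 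Evaluating at the two extreme admissible configurations — $x_1=x_2=a_u$, $y_3=y_4=a_l$, $\boldsymbol\mu=\boldsymbol{\nu^M}$, $\boldsymbol\eta=\boldsymbol{\nu^m}$, $a=A_u$, $b=B_l$ for the lower bound, and the reversed choices for the upper bound — then sandwiches $\tilde T$ between two strictly positive numbers depending only on the data, which establishes the asserted bounds. The genuine analytic content (strict monotonicity in $z$, surjectivity, the estimates $A_l\le A\le A_u$, $B_l\le B\le B_u$) is short; the part requiring the most care, and where I expect to have to be most vigilant against sign errors, is the bookkeeping of the monotonicity directions together with checking that the extremal parameter substitutions — and $\tilde T$ itself — remain inside the relevant domains $\Lambda$.
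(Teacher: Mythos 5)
Your proposal is correct and follows essentially the same route as the paper: deriving the identity $\tilde T A-B=\tilde\nu_1(\tilde n_1-n^{(1)})$ from \eqref{Temp}, substituting it into \eqref{B-33} to obtain a closed scalar equation $K_{\mathbf p}(\tilde T)=(\mu^{12}/\mu^{34})^{3/2}$, bounding $A$ and $B$, and then sandwiching $\tilde T$ via the monotonicity of $K$ in $z$ and in the parameters, with exactly the extremal substitutions the paper uses. The extra checks you flag (surjectivity of $K$ onto $(0,\infty)$ and membership of $\tilde T$ in $\Lambda$) are details the paper leaves implicit, not a deviation in method.
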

%%%%%%%%%%%%%%%%%%%%%%%%%%%%%%%%%%%%%%%%%%%%%%%%%%%%%%%%%%%%%%%%%%%%%%%%%%%%%%%%%%%%%%%%%%%%%%%%%%%%%%%%%
%
%
%
%
%%%%%%%%%%%%%%%%%%%%%%%%%%%%%%%%%%%%%%%%%%%%%%%%%%%%%%%%%%%%%%%%%%%%%%%%%%%%%%%%%%%%%%%%%%%%%%%%%%%%%%%%%%%
\section{Fixed point set-up}
We prove our main theorem applying Banach fixed point theorm to a solution operator defined from the mild form in an appropriately constructed solution space.
We define our solution space as follows:
\begin{align*}
\Omega=\bigg\{f=(f_1,f_2,f_3,f_4)\in(L^\infty([0,1]_x;L_2^1(\mathbb{R}_v^3)))^4\ |\ f_i\ \text{satisfies}\  (\mathcal{A}),(\mathcal{B}),(\mathcal{C})\bigg\}
\end{align*}
with the metric $\displaystyle d(f,g)=\sum_{i=1}^{4}\underset{x\in[0,1]}{\sup}||f_i-g_i||_{L_2^1}$, where $(\mathcal{A}),(\mathcal{B}),$ and $(\mathcal{C})$ denote

 $(\mathcal{A})$ $f_i$ are non-negative.

 $(\mathcal{B})$ The macroscopic quantities satisfy the followings:
\begin{align*}
a_{i,l}\leq\int_{\mathbb{R}^3}f_i(x,v)dv\leq a_{i,u}, \quad c_{i,l}\leq\int_{\mathbb{R}^3}|v|^2f_i(x,v)dv\leq c_{i,u},
\end{align*}

 $(\mathcal{C})$ The following lower bound holds:
\begin{align*}
\bigg(\int_{\mathbb{R}^3}f_idv\bigg)\bigg(\int_{\mathbb{R}^3}|v|^2f_idv\bigg)-\bigg(\int_{\mathbb{R}^3}vf_idv\bigg)^2\geq\gamma_l.
\end{align*}

And we define our solution map for the slow reaction model (\ref{system}) $\Phi:\Omega\rightarrow\Phi(\Omega)$ by $\Phi(f_1,f_2,f_3,f_4)=(\phi_1,\phi_2,\phi_3,\phi_4)$ where $\phi_i$ is defined as below
\begin{align*}
\phi_i(x,v)=&\bigg(e^{-\frac{1}{\tau |v_1|}\int_0^x \nu_i(y)dy }f_{i,L}(v)+\frac{1}{\tau |v_1|}\int_0^xe^{-\frac{1}{\tau |v_1|}\int_y^x\nu_i(z)dz}\nu_i\mathcal{M}_idy\bigg)1_{v_1>0}\\
&+\bigg(e^{-\frac{1}{\tau |v_1|}\int_x^1 \nu_i(y)dy }f_{i,R}(v)+\frac{1}{\tau |v_1|}\int_x^1e^{-\frac{1}{\tau |v_1|}\int_x^y\nu_i(z)dz}\nu_i\mathcal{M}_idy\bigg)1_{v_1<0}.
\end{align*}
For simplicity, we denote $\phi_i=\phi^+_i+\phi^-_i$, where
\begin{align*}
\phi^+_i(x,v)&=e^{-\frac{1}{\tau |v_1|}\int_0^x \nu_i(y)dy }f_{i,L}(v)+\frac{1}{\tau |v_1|}\int_0^xe^{-\frac{1}{\tau |v_1|}\int_y^x\nu_i(z)dz}\nu_i\mathcal{M}_idy,\\
\phi^-_i(x,v)&=e^{-\frac{1}{\tau |v_1|}\int_x^1 \nu_i(y)dy }f_{i,R}(v)+\frac{1}{\tau |v_1|}\int_x^1e^{-\frac{1}{\tau |v_1|}\int_x^y\nu_i(z)dz}\nu_i\mathcal{M}_idy.
\end{align*}
In a similar manner, we define our solution map for the fast reaction model (\ref{system2}) $\widetilde\Phi:\Omega\rightarrow\widetilde\Phi(\Omega)$ by $\widetilde\Phi(f_1,f_2,f_3,f_4)=(\widetilde\phi_1,\widetilde\phi_2,\widetilde\phi_3,\widetilde\phi_4)$ where $\widetilde\phi_i$ is defined as below
\begin{align*}
\widetilde\phi_i(x,v)=&\bigg(e^{-\frac{1}{\tau |v_1|}\int_0^x \tilde\nu_i(y)dy }f_{i,L}(v)+\frac{1}{\tau |v_1|}\int_0^xe^{-\frac{1}{\tau |v_1|}\int_y^x\tilde\nu_i(z)dz}\tilde\nu_i\widetilde{\mathcal{M}}_idy\bigg)1_{v_1>0}\\
&+\bigg(e^{-\frac{1}{\tau |v_1|}\int_x^1 \tilde\nu_i(y)dy }f_{i,R}(v)+\frac{1}{\tau |v_1|}\int_x^1e^{-\frac{1}{\tau |v_1|}\int_x^y\tilde\nu_i(z)dz}\tilde\nu_i\widetilde{\mathcal{M}}_idy\bigg)1_{v_1<0}.
\end{align*}
For simplicity, we denote $\widetilde\phi_i=\widetilde\phi^+_i+\widetilde\phi^-_i$, where
\begin{align*}
\widetilde\phi^+_i(x,v)&=e^{-\frac{1}{\tau |v_1|}\int_0^x \tilde\nu_i(y)dy }f_{i,L}(v)+\frac{1}{\tau |v_1|}\int_0^xe^{-\frac{1}{\tau |v_1|}\int_y^x\tilde\nu_i(z)dz}\tilde\nu_i\widetilde{\mathcal{M}}_idy,\\
\widetilde\phi^-_i(x,v)&=e^{-\frac{1}{\tau |v_1|}\int_x^1 \tilde\nu_i(y)dy }f_{i,R}(v)+\frac{1}{\tau |v_1|}\int_x^1e^{-\frac{1}{\tau |v_1|}\int_x^y\tilde\nu_i(z)dz}\tilde\nu_i\widetilde{\mathcal{M}}_idy.
\end{align*}

To apply the Banach fixed point theorem and conclude our main results, we need to prove that $\Phi$ ($\widetilde\Phi$) maps $\Omega$ into $\Omega$, and $\Phi$ ($\widetilde\Phi$) is a contraction on $\Omega$, under the assumption of Theorem \ref{main}  (Theorem \ref{main2}). These are proved respectively in Proposition \ref{gamma in gamma} in Section 6 and Proposition \ref{Contraction} in Section 7.
%%%%%%%%%%%%%%%%%%%%%%%%%%%%%%%%%%%%%%%%%%%%%%%%%%%%%%%%%%%%%%%%%%%%%%%%%%%%%%%%%%%%%%%%%%%%%%%%%%%%%%%%%
\section{$\Phi$ maps $\Omega$ into itself}
 The main goal of this section is stated in the following proposition. Since the arguments are similar, we provide detail mainly for the solution operator $\Phi$ for the slow reaction model \ref{system}.  
 %For the fast reaction model \ref{system2}, only important differences will be checked.
\begin{proposition}\label{gamma in gamma} (1) Assume the assumptions in Threom \ref{main} are satisfied. Let $f\in\Omega$. Then, $\Phi(f)\in\Omega$ for sufficiently large $\tau$.\newline
\noindent (2) Assume the assumptions in Threom \ref{main2} are satisfied. Let $f\in\Omega$. Then, $\widetilde\Phi(f)\in\Omega$ for sufficiently large $\tau$.

	\end{proposition}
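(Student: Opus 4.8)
The plan is to verify, for $f\in\Omega$, that $\Phi(f)=(\phi_1,\dots,\phi_4)$ again satisfies the three defining properties $(\mathcal A)$, $(\mathcal B)$, $(\mathcal C)$ of $\Omega$, and similarly for $\widetilde\Phi$; the two models run in parallel. The only structural input is Section~4: since $f$ satisfies $(\mathcal B)$ and $(\mathcal C)$, Lemmas~\ref{scmp}--\ref{lb} give, uniformly in $x$, two-sided bounds $C_{l,u}^{-1}\le\nu_i(x)\le C_{l,u}$ (for the slow model the upper bound is exactly where the smallness of $\nu_{12}^{34}$ is consumed, through Lemma~\ref{lb}), a two-sided bound on $T_i$, and upper bounds on $n_i$ and $|U_i|$. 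Hence $\mathcal M_i(x,\cdot)$ is dominated, uniformly in $x$, by a single Gaussian-type profile $\overline{\mathcal M}_i$ satisfying $\int(1+|v|^2)\overline{\mathcal M}_i\,dv\le C_{l,u}$, $\int_{|v_1|<\delta}(1+|v|^2)\overline{\mathcal M}_i\,dv\le C_{l,u}\delta$, and $\int_{|v_1|>\delta}|v_1|^{-1}(1+|v|^2)\overline{\mathcal M}_i\,dv\le C_{l,u}(1+|\log\delta|)$ for $0<\delta<1$. Property $(\mathcal A)$ is then immediate, since every term in the mild form is nonnegative.

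For the upper bounds in $(\mathcal B)$ I would split $\phi_i=\phi_i^++\phi_i^-$. The free-streaming terms contribute at most $\int f_{i,LR}\,dv=\tfrac12 a_{i,u}$ and $\int|v|^2f_{i,LR}\,dv=\tfrac12 c_{i,u}$ using $e^{-(\cdots)}\le1$. For the gain term I would use the identity $\int_0^x\tfrac{\nu_i(y)}{\tau|v_1|}e^{-\frac1{\tau|v_1|}\int_y^x\nu_i}\,dy=1-e^{-\frac1{\tau|v_1|}\int_0^x\nu_i}\le\min\{1,\,C_{l,u}/(\tau|v_1|)\}$ together with $\mathcal M_i\le\overline{\mathcal M}_i$; splitting the $v$-integral at $|v_1|=C_{l,u}/\tau$ and inserting the three envelope estimates bounds the gain contribution to $\int(1+|v|^2)\phi_i\,dv$ by $C_{l,u}(1+\log\tau)/\tau$, which is $\le\tfrac12\min\{a_{i,u},c_{i,u}\}$ once $\tau$ is large. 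For the lower bounds in $(\mathcal B)$ I would discard the nonnegative gain terms and use $\int\nu_i\le C_{l,u}$ with $1-e^{-t}\le t$ to get $\int\phi_i\,dv\ge\tfrac12 a_{i,u}-C_{l,u}a_{i,s}/\tau\ge a_{i,l}$ and $\int|v|^2\phi_i\,dv\ge\tfrac12 c_{i,u}-C_{l,u}c_{i,s}/\tau\ge c_{i,l}$ for $\tau$ large.

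The heart of the matter is $(\mathcal C)$. The key observation is that $Q(g):=(\int g)(\int|v|^2 g)-(\int v_1 g)^2$ has a manifestly nonnegative polarization: for $g,h\ge0$, $Q(g+h)-Q(g)-Q(h)=\iint g(v)h(w)\bigl((v_1-w_1)^2+|v_\perp|^2+|w_\perp|^2\bigr)\,dv\,dw\ge0$, so $Q$ never decreases when a nonnegative function is added. Since $\phi_i\ge g_i^++g_i^-$ with $g_i^+:=e^{-\frac1{\tau v_1}\int_0^x\nu_i}f_{i,L}1_{v_1>0}$ and $g_i^-:=e^{-\frac1{\tau|v_1|}\int_x^1\nu_i}f_{i,R}1_{v_1<0}$, this yields $Q(\phi_i)\ge B(g_i^+,g_i^-)$, where $B$ is the above bilinear form; on the disjoint supports $(v_1-w_1)^2=(v_1+|w_1|)^2\ge4v_1|w_1|$, so $Q(\phi_i)\ge4\bigl(\int_{v_1>0}v_1 g_i^+\,dv\bigr)\bigl(\int_{v_1<0}|v_1| g_i^-\,dv\bigr)$. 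Arguing exactly as in the lower $(\mathcal B)$ estimates, $\int_{v_1>0}v_1 g_i^+\,dv\ge\tfrac12\int_{v_1>0}v_1 f_{i,L}\,dv$ for $\tau$ large, and likewise for $g_i^-$, so $Q(\phi_i)\ge\bigl(\int_{v_1>0}v_1 f_{i,L}\,dv\bigr)\bigl(\int_{v_1<0}|v_1| f_{i,R}\,dv\bigr)=16\gamma_{i,l}\ge\gamma_l$.

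The main obstacle is precisely $(\mathcal C)$: one cannot hope to propagate a lower bound on the full energy-type moment through the operator, because $Q$ is not monotone in any naive pointwise sense; the correct move is to retain only the damped inflow parts of $\phi_i$ and exploit the bilinear form $B$ and the sign of $(v_1-w_1)^2$, which is where the boundary constants $\gamma_{i,l}$ in \eqref{parameters3} arise. A secondary nuisance is the logarithmic loss $\log\tau$ in the Maxwellian contribution to the upper bounds, which is harmless because it is beaten by the $1/\tau$ gain from the large Knudsen number. The argument for $\widetilde\Phi$ and the fast-reaction model is word for word the same, now invoking Lemma~\ref{lemma48} and its corollaries for the parameters $\tilde n_i,\tilde U,\tilde T,\tilde\nu_i$, and requires no smallness assumption on $\nu_{12}^{34}$.
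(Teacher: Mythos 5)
Your treatment of $(\mathcal A)$ and $(\mathcal B)$ is correct and essentially the paper's: nonnegativity is immediate, the free--streaming parts are bounded by $\tfrac12 a_{i,u}$ (resp.\ $\tfrac12 c_{i,u}$), and the Maxwellian gain term is $O((1+\log\tau)/\tau)$ via the Gaussian envelope of Lemma \ref{estimate4} together with a split of the $v_1$-integration (the paper cuts at $1/\tau$ and $\tau$, you cut at $C_{l,u}/\tau$; same outcome). Your lower bound via $1-e^{-t}\le t$ and $a_{i,s}<\infty$ differs only cosmetically from the paper's Lemma \ref{l}, which instead concentrates the boundary mass on $\{|v_1|\ge r\}$ and retains the factor $\tfrac14$; both yield $a_{i,l}$, $c_{i,l}$. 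The genuine divergence is in $(\mathcal C)$. The paper first applies Cauchy--Schwarz to reduce to $(\int\phi_i|v_1|dv)^2-(\int\phi_iv_1dv)^2=4(\int_{v_1>0}\phi_i|v_1|dv)(\int_{v_1<0}\phi_i|v_1|dv)$ and must then separately control the vertical moments $\int\phi_iv_j\,dv$, $j=2,3$ (Lemma \ref{estimate7}); this is exactly where the no-vertical-flow hypothesis on $f_{i,L},f_{i,R}$ is consumed, and it forces the final subtraction $4\gamma_l-C_{l,u}(\ln\tau+1)/\tau\ge\gamma_l$. Your superadditivity argument $Q(g+h)\ge B(g,h)$, with $B(g,h)=\iint g(v)h(w)\bigl((v_1-w_1)^2+|v_\perp|^2+|w_\perp|^2\bigr)dv\,dw$, avoids both steps: restricting to the damped inflow parts and using $(v_1-w_1)^2\ge 4v_1|w_1|$ on the disjoint supports gives $Q(\phi_i)\ge 16\gamma_{i,l}$ outright, with no subtraction and no use of the vanishing vertical fluxes. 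One point you should fix: you defined $Q$ with $(\int v_1g\,dv)^2$, whereas condition $(\mathcal C)$ in the definition of $\Omega$ (and its use in Lemma \ref{scmp}) involves the full vector $(\int vg\,dv)^2$; your polarization goes through verbatim in that case, since the cross term becomes $\iint g(v)h(w)|v-w|^2dv\,dw\ge\iint g(v)h(w)(v_1-w_1)^2dv\,dw$, so the stronger full-vector bound comes for free --- and, as a byproduct, your route shows that the no-vertical-flow boundary assumption is not actually needed for the invariance of $\Omega$, since in the paper it enters only through Lemma \ref{estimate7}. (A small quibble: your closing remark that $Q$ is ``not monotone in any naive pointwise sense'' is contradicted by your own identity, which shows $Q(g+h)\ge Q(g)$ for $h\ge0$.)
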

\begin{remark} We only consider the slow reaction model. The proof for the fast reaction model is identical. 
\end{remark}
	\begin{proof}
The proof is divided into Lemma \ref{prop1}, Lemma \ref{prop2}, Lemma \ref{prop3}, and Lemma \ref{prop4}
below.
\end{proof}.
\begin{lemma}\label{estimate4} Let $f\in\Omega$. Then there exist positive constants $C_{l,u}$ such that
\begin{align*}
\begin{split}
\mathcal{M}_i(1+|v|^2)\leq C_{l,u}\exp{\bigg(-C_{l,u}|v|^2\bigg)}.
\end{split}
\end{align*}
\begin{proof} Lemma \ref{ub} and \ref{lb} imply that
\begin{align*}
\mathcal{M}_i&=n_i\bigg(\frac{m_i}{2k\pi T_i}\bigg)^{3/2} \exp{\bigg(-\frac{m_i|v-U_i|^2}{2kT_i}\bigg)}\\
&\leq C_{l,u}\exp{\bigg(\frac{-m_i|v-U_i|^2}{2kT_i}\bigg)}\\
&\leq C_{l,u}\exp{\bigg(\frac{m_i|U_i|^2}{2kT_i}\bigg)}\exp{\bigg(\frac{-m_i|v|^2}{4kT_i}\bigg)}\\
&\leq C_{l,u}\exp{\bigg(-C_{l,u}|v|^2\bigg)}.
\end{align*}
And for $|v|^2\mathcal{M}_1$, we know
\begin{align*}
|v|^2\mathcal{M}_i &\leq C_{l,u}\exp{\bigg(-C_{l,u}|v|^2\bigg)}|v|^2
\leq C_{l,u} \exp{\bigg(-C_{l,u}|v|^2\bigg)},
\end{align*}
where we use $x^2e^{-x^2}<C$ for some $C>0$.
\end{proof}
\end{lemma}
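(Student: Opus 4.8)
The plan is to deduce Lemma \ref{estimate4} directly from the uniform control on the reactive equilibrium coefficients obtained in Lemmas \ref{ub} and \ref{lb}. Those lemmas provide, for every $f\in\Omega$, constants $C_{l,u}>0$ depending only on the quantities in (\ref{parameters1}), (\ref{parameters2}) and (\ref{parameters3}) with
\begin{align*}
n_i\le C_{l,u},\qquad |U_i|\le C_{l,u},\qquad C_{l,u}^{-1}\le T_i\le C_{l,u}.
\end{align*}
First I would bound the Maxwellian prefactor: since $n_i$ is bounded above and $T_i$ is bounded below, $n_i\bigl(\tfrac{m_i}{2\pi kT_i}\bigr)^{3/2}\le C_{l,u}$.

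Next I would treat the Gaussian factor. Expanding $|v-U_i|^2$ and using Young's inequality in the form $|v-U_i|^2\ge\tfrac12|v|^2-|U_i|^2$, one gets
\begin{align*}
\exp\!\left(-\frac{m_i|v-U_i|^2}{2kT_i}\right)\le \exp\!\left(\frac{m_i|U_i|^2}{2kT_i}\right)\exp\!\left(-\frac{m_i|v|^2}{4kT_i}\right).
\end{align*}
Here the first factor is $\le C_{l,u}$ because $|U_i|$ is bounded above and $T_i$ is bounded below, while the upper bound on $T_i$ gives $\tfrac{m_i}{4kT_i}\ge C_{l,u}^{-1}>0$, so the second factor is $\le\exp(-C_{l,u}|v|^2)$. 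Multiplying by the prefactor bound yields $\mathcal{M}_i\le C_{l,u}\exp(-C_{l,u}|v|^2)$.

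Finally, to absorb the polynomial weight $1+|v|^2$, I would split off one copy of the Gaussian and use that $t\mapsto(1+t)e^{-ct}$ is bounded on $[0,\infty)$ for every $c>0$, so that $(1+|v|^2)\exp(-C_{l,u}|v|^2)\le C_{l,u}\exp(-\tfrac12 C_{l,u}|v|^2)$; combining with the previous display gives the assertion (with the exponential rate possibly halved).

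I do not anticipate a real obstacle here: all the genuine difficulty of the paper — in particular the uniform positive lower bound for $T_i$, which is the source of the smallness requirement on $\nu_{12}^{34}$ — has already been placed in Lemma \ref{lb}, and what remains is a textbook Gaussian tail estimate. The only points that need a little care are that one must use \emph{both} an upper and a strictly positive lower bound for $T_i$ (the lower bound keeps the prefactor and the correction factor $e^{m_i|U_i|^2/2kT_i}$ finite, the upper bound preserves honest Gaussian decay in $v$), and that this pointwise estimate should be stated so that it will later directly supply the moment bounds for $\mathcal{M}_i$ needed for the invariance $\Phi(\Omega)\subset\Omega$.
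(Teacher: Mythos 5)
Your proposal is correct and follows essentially the same route as the paper: both bound the prefactor via the upper bound on $n_i$ and the lower bound on $T_i$ from Lemmas \ref{ub} and \ref{lb}, apply $|v-U_i|^2\ge\tfrac12|v|^2-|U_i|^2$ to extract a Gaussian in $|v|$ at half the rate, and absorb the polynomial weight by the boundedness of $t\mapsto(1+t)e^{-ct}$. The only cosmetic difference is that the paper writes the last step as $x^2e^{-x^2}<C$ rather than halving the exponential rate, which is immaterial since the constants are generic.
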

\begin{lemma}\label{prop1} Let $f\in\Omega$. Assume $f_{i,L}$ and $f_{i,R}$ satisfy all assumptions in Theorem \ref{main}. Then
\begin{align*}
\phi_i\geq0.
\end{align*}
\begin{proof} By Lemma \ref{ub}, we have
	\begin{align*}
	\mathcal{M}_i&=n_i\bigg(\frac{m_i}{2\pi k T_i}\bigg)^{3/2} \exp{\bigg(-\frac{m_i|v-U_i|^2}{2 kT_i}\bigg)}\\&\geq a_{i,l}\bigg(\frac{m_i}{2\pi k T_u}\bigg)^{3/2}\exp{\bigg(-\frac{m_i|v-U_i|^2}{2 kT_i}\bigg)}\\&>0.
	\end{align*}
	Hence,
\begin{align*}
\phi_i&\geq e^{-\frac{1}{\tau |v_1|}\int_0^x \nu_i(y)dy }f_{i,L}(v)1_{v_1>0} + e^{-\frac{1}{\tau |v_1|}\int_x^1 \nu_i(y)dy }f_{i,R}(v)1_{v_1<0}\geq 0.
\end{align*}
\end{proof}
\end{lemma}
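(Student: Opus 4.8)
The plan is to show that each $\phi_i$ is the sum of manifestly non-negative terms. Recall that $\phi_i = \phi_i^+ + \phi_i^-$, where $\phi_i^+$ is supported on $\{v_1>0\}$ and $\phi_i^-$ on $\{v_1<0\}$, so it suffices to verify $\phi_i^+\geq 0$ on $\{v_1>0\}$ and $\phi_i^-\geq 0$ on $\{v_1<0\}$; in fact it suffices to show each of the two summands defining $\phi_i^\pm$ is non-negative. The boundary term $e^{-\frac{1}{\tau|v_1|}\int_0^x \nu_i(y)\,dy}f_{i,L}(v)$ is non-negative because the exponential is positive and $f_{i,L}\geq 0$ by hypothesis (the inflow data is a distribution function), and similarly for the $f_{i,R}$ term.

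The only point requiring an argument is that the gain term $\frac{1}{\tau|v_1|}\int_0^x e^{-\frac{1}{\tau|v_1|}\int_y^x \nu_i(z)\,dz}\nu_i \mathcal{M}_i\,dy$ is non-negative, and for this one needs $\nu_i\geq 0$ and $\mathcal{M}_i\geq 0$ pointwise. The collision frequency $\nu_i$ is non-negative since $f\in\Omega$ makes all the $n^{(j)}$ non-negative and each coefficient in \eqref{nu fast} is positive (note $\Gamma(3/2,\Delta E/kT)>0$). For $\mathcal{M}_i$, I would invoke Lemma \ref{ub}: it gives an upper bound $T_u$ for $T_i$, and together with the positive lower bound for $n_i$ from Lemma \ref{nl}, one gets $\mathcal{M}_i = n_i(m_i/2\pi kT_i)^{3/2}\exp(-m_i|v-U_i|^2/2kT_i) \geq a_{i,l}(m_i/2\pi kT_u)^{3/2}\exp(-m_i|v-U_i|^2/2kT_i) > 0$. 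Strictly speaking one only needs $\mathcal{M}_i\geq 0$, which is immediate from $n_i\geq 0$ and $T_i>0$, so the lower bounds are more than enough. Dropping the (non-negative) gain terms entirely then yields $\phi_i \geq e^{-\frac{1}{\tau|v_1|}\int_0^x\nu_i(y)\,dy}f_{i,L}(v)1_{v_1>0} + e^{-\frac{1}{\tau|v_1|}\int_x^1\nu_i(y)\,dy}f_{i,R}(v)1_{v_1<0}\geq 0$.

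There is no real obstacle here; the lemma is essentially a bookkeeping check. The one thing to be careful about is making sure the reactive parameters entering $\mathcal{M}_i$ are well-defined and have the right sign, which is exactly what Lemmas \ref{nl}, \ref{ub}, and \ref{lb} guarantee under the smallness assumption on $\nu_{12}^{34}$ in Theorem \ref{main}; in particular one uses that $T_i$ admits a positive lower bound so that $\mathcal{M}_i$ is a genuine (positive) Maxwellian rather than a degenerate object. Given those, the proof is the three-line computation above.
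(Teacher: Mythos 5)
Your proposal is correct and follows essentially the same route as the paper: verify that the Maxwellian $\mathcal{M}_i$ (and hence the gain term) is non-negative using the bounds on the reactive parameters, and then bound $\phi_i$ below by the manifestly non-negative boundary terms. The extra remarks about $\nu_i\geq 0$ and only needing $\mathcal{M}_i\geq 0$ rather than strict positivity are fine but do not change the argument.
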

\begin{lemma}\label{l} Assume $f_{i,L}$ and $f_{i,R}$ satisfy all assumptions in Theorem \ref{main}. Then, for sufficiently large $\tau$, we have
	\begin{align*}
	\int_{v_1>0}e^{-\frac{1}{\tau |v_1|}\int_0^x \nu_i(y)dy}f_{i,L}(v)
	\begin{pmatrix}
	1\\
	|v_1|\\
	|v|^2
	\end{pmatrix} dv
	\geq 
	\frac{1}{4}\int_{v_1>0}f_{i,L}(v)
	\begin{pmatrix}
	1\\
	|v_1|\\
	|v|^2
	\end{pmatrix} dv
	\end{align*}
	and
	\begin{align*}
	\int_{v_1<0}e^{-\frac{1}{\tau |v_1|}\int_x^1 \nu_i(y)dy}f_{i,R}(v)
	\begin{pmatrix}
	1\\
	|v_1|\\
	|v|^2
	\end{pmatrix} dv
	\geq 
	\frac{1}{4}\int_{v_1<0}f_{i,R}(v)
	\begin{pmatrix}
	1\\
	|v_1|\\
	|v|^2
	\end{pmatrix} dv.
	\end{align*}
	\begin{proof} Take $r>0$ small enough so that
		\begin{align*}
		\int_{v_1\geq r}f_{i,L}(v)dv\geq\frac{1}{2}\int_{v_1>0}f_{i,L}(v)dv.
		\end{align*}
		Then for sufficiently large $\tau$,
		\begin{align*}
		\int_{v_1>0}e^{-\frac{1}{\tau |v_1|}\int_0^x \nu_i(y)dy}f_{i,R}(v)dv&\geq e^{-\frac{C_{l,u}}{\tau r}}\int_{v_1>r}f_{i,L}dv
		\geq \frac{1}{4}\int_{v_1>0}f_{i,L}dv
		=a_{i,l}.
		\end{align*}
	Other estimates can be proved by the same argument. We omit it.
	\end{proof}
\end{lemma}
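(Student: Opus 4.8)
The plan is to reduce the statement to a uniform‑in‑$x$ lower bound on the exponential damping factor and then to localize away from the degenerate set $\{v_1=0\}$. First I would record that for $f\in\Omega$ the collision frequency $\nu_i(\cdot)$ is bounded above on $[0,1]$ by some $C_{l,u}$: in the definition \eqref{nu fast} each $n^{(j)}$ is at most $a_{j,u}$, the factor $\Gamma(3/2,\Delta E/kT)$ never exceeds $\Gamma(3/2,0)=\tfrac{\sqrt\pi}{2}$, and $e^{\Delta E/kT}\le e^{\Delta E/kT_l}$ by the temperature bounds of Lemma \ref{gm}. Hence, since $x\in[0,1]$, for $v_1>0$ one has
\[
0\le\frac{1}{\tau|v_1|}\int_0^x\nu_i(y)\,dy\le\frac{C_{l,u}}{\tau|v_1|},\qquad\text{so}\qquad e^{-\frac{1}{\tau|v_1|}\int_0^x\nu_i(y)\,dy}\ \ge\ e^{-C_{l,u}/(\tau|v_1|)}
\]
uniformly in $x$. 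The only genuine obstruction is that this lower bound collapses as $v_1\to0^+$, so no uniform positive constant can be pulled out over the whole half‑space $\{v_1>0\}$; the entire argument is about escaping that degeneracy.

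Next I would treat each of the three weights $w\in\{1,\,|v_1|,\,|v|^2\}$ separately. Since $f_{i,L}\in L_2^1$ and $|v_1|\le\tfrac12(1+|v|^2)$, the number $m_w:=\int_{v_1>0}f_{i,L}(v)\,w(v)\,dv$ is finite; if $m_w=0$ the inequality for $w$ is trivial, and otherwise, by absolute continuity of the Lebesgue integral, there is $r_w>0$ with $\int_{\{0<v_1<r_w\}}f_{i,L}\,w\,dv\le\tfrac12 m_w$, i.e. $\int_{\{v_1\ge r_w\}}f_{i,L}\,w\,dv\ge\tfrac12 m_w$. Setting $r:=\min_w r_w>0$ gives, simultaneously for the three weights, $\int_{\{v_1\ge r\}}f_{i,L}\,w\,dv\ge\tfrac12\int_{v_1>0}f_{i,L}\,w\,dv$. (One could equally well skip the cutoff and invoke dominated convergence, since $e^{-C_{l,u}/(\tau|v_1|)}f_{i,L}\,w\uparrow f_{i,L}\,w$ pointwise on $\{v_1>0\}$ as $\tau\to\infty$, with integrable majorant $f_{i,L}\,w$.)

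On $\{v_1\ge r\}$ one has $e^{-C_{l,u}/(\tau|v_1|)}\ge e^{-C_{l,u}/(\tau r)}$, so
\[
\int_{v_1>0}e^{-\frac{1}{\tau|v_1|}\int_0^x\nu_i(y)\,dy}f_{i,L}(v)\,w(v)\,dv\ \ge\ e^{-C_{l,u}/(\tau r)}\int_{\{v_1\ge r\}}f_{i,L}(v)\,w(v)\,dv\ \ge\ \tfrac12\,e^{-C_{l,u}/(\tau r)}\int_{v_1>0}f_{i,L}(v)\,w(v)\,dv ,
\]
and choosing $\tau$ so large that $e^{-C_{l,u}/(\tau r)}\ge\tfrac12$ (that is, $\tau\ge C_{l,u}/(r\ln 2)$) yields the first asserted inequality, uniformly in $x\in[0,1]$ and for all three moments at once. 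The second inequality, with $f_{i,R}$ on $\{v_1<0\}$ and $\int_x^1\nu_i(y)\,dy$ replacing $f_{i,L}$ on $\{v_1>0\}$ and $\int_0^x\nu_i(y)\,dy$, follows by the identical argument, so I would simply say ``by the same argument''. The one point worth flagging — and the reason the statement asks only for ``sufficiently large $\tau$'' — is that the cutoff radius $r$, hence the threshold on $\tau$, depends on the profiles of $f_{i,L},f_{i,R}$ themselves and not merely on the moment constants in \eqref{parameters1}, \eqref{parameters2}, \eqref{parameters3}; this is harmless in the sequel, where $\tau$ is in any case taken large. The main (mild) obstacle is thus exactly the $v_1\to0$ degeneracy, disposed of by the cutoff together with the integrability of the boundary data.
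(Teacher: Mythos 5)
Your proposal is correct and follows essentially the same route as the paper: bound $\nu_i$ above by $C_{l,u}$, choose a cutoff $r>0$ capturing at least half of each moment of $f_{i,LR}$ away from $\{v_1=0\}$, and take $\tau$ large enough that $e^{-C_{l,u}/(\tau r)}\ge\tfrac12$. Your treatment is in fact slightly more careful than the paper's (handling all three weights explicitly and noting the dependence of $r$ on the boundary profiles), but the underlying argument is identical.
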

\begin{lemma}\label{prop2} Assume $f\in\Omega$ and 
	$f_{i,L}$ and $f_{i,R}$ satisfy the assumptions in Theorem \ref{main}. Then we have
\begin{align*}
a_{i,l}\leq\int_{\mathbb{R}^3}\phi_i\ dv, \quad c_{i,l}\leq\int_{\mathbb{R}^3}|v|^2\phi_i\ dv.
\end{align*}
\begin{proof} We know
\begin{align*}
\phi_i&\geq e^{-\frac{1}{\tau |v_1|}\int_0^x \nu_i(y)dy}f_{i,L}(v)1_{v_1>0}+e^{-\frac{1}{\tau|v_1|}\int_x^1\nu_i(y)dy}f_{i,R}(v)1_{v_1>0}.
\end{align*}
Intergrating with respect to $dv$ and $|v|^2dv$, we obtain from Lemma \ref{l} that
\begin{align*}
\int_{\mathbb{R}^3}\phi_idv\geq a_{i,l}
\end{align*}
and
\begin{align*}
\int_{\mathbb{R}^3}|v|^2\phi_idv \geq c_{1,l}.
\end{align*}
\end{proof}
\end{lemma}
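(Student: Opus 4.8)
My plan is to discard, in $\phi_i=\phi_i^{+}+\phi_i^{-}$, the two integral terms built from the reactive Maxwellian $\mathcal{M}_i$. These are nonnegative, since $\nu_i\geq 0$ and $\mathcal{M}_i\geq 0$ --- Lemma \ref{ub} provides $n_i>0$ and $T_i>0$, so $\mathcal{M}_i$ is a genuine Gaussian --- so what survives is the pure transport of the inflow data,
\begin{align*}
\phi_i(x,v)\ \geq\ e^{-\frac{1}{\tau|v_1|}\int_0^x\nu_i(y)\,dy}\,f_{i,L}(v)\,1_{v_1>0}\ +\ e^{-\frac{1}{\tau|v_1|}\int_x^1\nu_i(y)\,dy}\,f_{i,R}(v)\,1_{v_1<0}.
\end{align*}
I would then integrate this pointwise inequality against $dv$ and against $|v|^2\,dv$, splitting $\mathbb{R}^3$ into the half-spaces $\{v_1>0\}$ and $\{v_1<0\}$ on which the two terms respectively live.

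On each half-space the surviving integral is exactly of the type controlled by Lemma \ref{l}. Provided $\tau$ exceeds a threshold dictated by the uniform upper bound $\nu_i(y)\leq C_{l,u}$ --- which itself follows from $f\in\Omega$, the bound $\int f_i\,dv\leq a_{i,u}$ in $(\mathcal{B})$, the temperature lower bound of Lemma \ref{gm}, and the explicit form of $\nu_i$ in (\ref{nu fast}) --- that lemma gives, for $\varphi\in\{1,|v|^2\}$,
\begin{align*}
\int_{v_1>0}e^{-\frac{1}{\tau|v_1|}\int_0^x\nu_i(y)\,dy}f_{i,L}(v)\,\varphi(v)\,dv\ \geq\ \frac14\int_{v_1>0}f_{i,L}(v)\,\varphi(v)\,dv,
\end{align*}
and similarly with $f_{i,R}$ over $\{v_1<0\}$. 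Adding the two contributions, recalling $f_{i,LR}=f_{i,L}1_{v_1>0}+f_{i,R}1_{v_1<0}$, and using the definitions in (\ref{parameters1}), I obtain the claimed bounds,
\begin{align*}
\int_{\mathbb{R}^3}\phi_i\,dv\ &\geq\ \frac14\int_{\mathbb{R}^3}f_{i,LR}\,dv\ =\ \frac{a_{i,u}}{8}\ =\ a_{i,l},\\
\int_{\mathbb{R}^3}|v|^2\phi_i\,dv\ &\geq\ \frac14\int_{\mathbb{R}^3}|v|^2f_{i,LR}\,dv\ =\ \frac{c_{i,u}}{8}\ =\ c_{i,l}.
\end{align*}

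I do not expect a genuine obstacle inside this lemma: once the pointwise lower bound is in place, everything reduces to Lemma \ref{l} and the arithmetic of (\ref{parameters1}). The only substantive ingredient is the one already absorbed into Lemma \ref{l}, namely the uniform control of $\nu_i$ along the entire slab, so that the loss incurred by the transport exponential is bounded independently of $x$. The genuinely hard parts of the overall program are elsewhere --- the matching \emph{upper} bounds for $\int\phi_i\,dv$ and $\int|v|^2\phi_i\,dv$, where one must control the $1/|v_1|$ singularity of the Maxwellian gain term, and the coercivity estimate $(\mathcal{C})$ --- and these play no role here.
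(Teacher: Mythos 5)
Your proposal is correct and follows essentially the same route as the paper: drop the nonnegative Maxwellian gain terms to reduce $\phi_i$ to the damped transport of the inflow data, then invoke Lemma \ref{l} on each half-space and use the definitions in (\ref{parameters1}) to identify $\frac14\int f_{i,LR}\,dv=a_{i,l}$ and $\frac14\int|v|^2f_{i,LR}\,dv=c_{i,l}$. You merely spell out the arithmetic and the (implicit) largeness requirement on $\tau$ that the paper leaves to Lemma \ref{l}.
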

\begin{lemma}\label{prop3} Let $f \in \Omega$. Assume $f_{i,L}$ and $f_{i,R}$ satisfy the assumptions in Theorem \ref{main}. Then for  $\tau>0$ sufficiently large,  we have
\begin{align*}
\begin{split}
\int_{\mathbb{R}^3}\phi_i dv\leq a_{i,u}, \quad \int_{\mathbb{R}^3}|v|^2\phi_i dv\leq c_{i,u}.
\end{split}
\end{align*}
\begin{proof} We define $I$ and $II$ from
\begin{align*}
\int_{\mathbb{R}^3}\phi^+_idv=&\int_{v_1>0}e^{-\frac{1}{\tau|v_1|}\int_0^x \nu_i(y)dy}f_{i,L}(v)dv\\
&+\int_{v_1>0}\int_0^x \frac{1}{\tau|v_1|}e^{-\frac{1}{\tau|v_1|}\int_y^x \nu_i(z)dz}\nu_i\mathcal{M}_idydv\\
=&I+II.
\end{align*}
For $I$, we have,
\begin{align}\label{I}
\int_{v_1>0}e^{-\frac{1}{\tau|v_1|}\int_0^x \nu_i(y)dy}f_{i,L}(v)dv\leq\int_{v_1>0}f_{i,L}(v)dv.
\end{align}
By Lemma \ref{estimate4}, we compute $II$ as
\begin{align}\label{computation}
\begin{split}
\int_{v_1>0}&\int_0^x \frac{1}{\tau|v_1|}e^{-\frac{1}{\tau|v_1|}\int_y^x \nu_i(z)dz}\nu_i\mathcal{M}_idv\\
\leq& C_{l,u}\int_{v_1>0}\int_0^x \frac{1}{\tau|v_1|}e^{-\sum_{r=1}^{4}\nu_{ir}a_l/\tau|v_1|(x-y)}e^{-C_{l,u}|v|^2}dydv\\
\leq& C_{l,u}\bigg(\int_0^x\int_{v_1>0}\frac{1}{\tau|v_1|}e^{-\sum_{r=1}^{4}\nu_{ir}a_l/\tau|v_1|(x-y)}e^{-C_{l,u}|v_1|^2}dv_1dy\bigg)\\
&\times\bigg(\int_{\mathbb{R}^2}e^{-C_{l,u}(|v_2|^2+|v_3|^2)}dv_2dv_3\bigg)\\
\leq& C_{l,u}\int_0^x\int_{v_1>0}\frac{1}{\tau|v_1|}e^{-\sum_{r=1}^{4}\nu_{ir}a_l/\tau|v_1|(x-y)}e^{-C_{l,u}|v_1|^2}dv_1dy\\
=:&C_{l,u}\overline{II}.
\end{split}
\end{align}
We divide the domain of integration as follows:
\begin{align*}
\overline{II}&=\bigg\{\int_0^x\int_{0<v_1<\frac{1}{\tau}}+\int_0^x\int_{\frac{1}{\tau}<v_1<\tau}+\int_0^x\int_{\tau<v_1}\bigg\}\frac{1}{\tau|v_1|}e^{-\sum_{r=1}^{4}\nu_{ir}a_l/\tau|v_1|(x-y)}e^{-C_{l,u}|v_1|^2}dv_1dy\\
&=:A+B+C.
\end{align*}
For $A$, we compute
\begin{align*}
A&=\int_{0<v_1<\frac{1}{\tau}}\int_0^x \frac{1}{\tau|v_1|}e^{-\sum_{r=1}^{4}\nu_{ir}a_l/\tau|v_1|(x-y)}e^{-C_{l,u}|v_1|^2}dydv_1\\
&=\frac{1}{\sum_{r=1}^{4}\nu_{ir}a_l}\int_{0<v_1<\frac{1}{\tau}}\bigg( 1-e^{-\sum_{r=1}^{4}\nu_{ir}a_l/\tau|v_1|x}\bigg)e^{-C_{l,u}|v_1|^2}dv_1\\
&\leq \frac{1}{\sum_{r=1}^{4}\nu_{ir}a_l}\int_{0<v_1<\frac{1}{\tau}}1\ dv_1\\
&\leq \frac{1}{\sum_{r=1}^{4}\nu_{ir}a_l},
\end{align*}
where we used $1-e^{-\frac{\nu_1}{|v_1|}}\leq1$ and $e^{-C_{l,u}|v_1|^2}\leq1$. Similarly we estimate $B$ as
\begin{align*}
B&\leq\frac{1}{\sum_{r=1}^{4}\nu_{ir}a_l}\int_{\frac{1}{\tau}<v_1<\tau}\bigg( 1-e^{-\sum_{r=1}^{4}\nu_{ir}a_l/\tau|v_1|x}\bigg)e^{-C_{l,u}|v_1|^2}dv_1\\
&\leq \int_{\frac{1}{\tau}<v_1<\tau} \frac{1}{\tau|v_1|}dv_1\\
&= \frac{2}{\tau}\ln{\tau},
\end{align*}
where we used $1-e^{-x}\leq x$.\\
Finally, we compute 
\begin{align*}
C&\leq \int_{\tau<v_1}\int_0^x \frac{1}{\tau|v_1|}e^{-\sum_{r=1}^{4}\nu_{ir}a_l/\tau|v_1|(x-y)}e^{-C_{l,u}|v_1|^2}dydv_1\\
&\leq \frac{1}{\tau^2} \int_{\mathbb{R}}e^{-C_{l,u}|v_1|^2}dv_1\\
&\leq C_{l,u}\frac{1}{\tau^2}.
\end{align*}
Summarizing the estimates for $A,B$ and $C$, we obtain 
\begin{align}\label{II}
II\leq C_{l,u}\bigg\{\frac{1}{\tau}+\frac{\ln{\tau}}{\tau}+\frac{1}{\tau^2}\bigg\}\leq C_{l,u}\bigg\{\frac{\ln{\tau}+1}{\tau}\bigg\}.
\end{align}
Combining (\ref{I}) with (\ref{II}), we have
\begin{align*}
\int_{\mathbb{R}^3}\phi^+_idv \leq\int_{v_1>0}f_{I,L}(v)dv+ C_{l,u}\bigg\{\frac{\ln{\tau}+1}{\tau}\bigg\}.
\end{align*}
We can derive similar estimate for $\phi^-_i$:
\begin{align*}
\int_{\mathbb{R}^3}\phi^-_idv \leq\int_{v_1<0}f_{I,R}(v)dv+ C_{l,u}\bigg\{\frac{\ln{\tau}+1}{\tau}\bigg\},
\end{align*}
and hence
\begin{align*}
\int_{\mathbb{R}^3}\phi_i dv\leq \frac{a_u}{2}+C_{l,u}\bigg\{\frac{\ln{\tau}+1}{\tau}\bigg\}.
\end{align*}
By choosing sufficiently large $\tau>0$, we get the desired result. The proof for the second estimate 
is almost identical. We omit it.
\end{proof}
\end{lemma}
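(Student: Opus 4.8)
The plan is to split $\phi_i=\phi_i^++\phi_i^-$ into its boundary (pure-transport) part and its gain part built from the reactive Maxwellian $\mathcal{M}_i$, to show the boundary parts already supply essentially the whole budget $a_{i,u}$ (resp. $c_{i,u}$), and to show the gain parts are $O\big(\tfrac{1+\ln\tau}{\tau}\big)$, hence negligible once $\tau$ is large. For the boundary part of $\phi_i^+$ one simply drops the damping exponential, $\int_{v_1>0}e^{-\frac{1}{\tau|v_1|}\int_0^x\nu_i\,dy}f_{i,L}\,dv\le\int_{v_1>0}f_{i,L}\,dv$, and likewise $\int_{v_1<0}e^{-\frac{1}{\tau|v_1|}\int_x^1\nu_i\,dy}f_{i,R}\,dv\le\int_{v_1<0}f_{i,R}\,dv$; adding the two and recalling the definition of $a_{i,u}$ in \eqref{parameters1} gives a total of $\tfrac12 a_{i,u}$, and the same estimate with the extra weight $|v|^2$ gives $\tfrac12 c_{i,u}$.

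For the gain part I would first bound $\nu_i\le C_{l,u}$ for the prefactor (immediate from \eqref{nu fast} together with Lemma \ref{scmp} and Lemma \ref{gm}) and use the lower bound $\nu_i\ge\sum_{j}\nu_{ij}a_l$ of \eqref{nul} in the exponent, then invoke Lemma \ref{estimate4} to replace $\mathcal{M}_i$ (and $|v|^2\mathcal{M}_i$) by $C_{l,u}e^{-C_{l,u}|v|^2}$. Performing the $dy$ integral leaves a factor $\frac{1}{\tau|v_1|}\int_0^xe^{-\frac{1}{\tau|v_1|}(\sum_{j}\nu_{ij}a_l)(x-y)}\,dy=\frac{1}{\sum_{j}\nu_{ij}a_l}\big(1-e^{-\frac{x}{\tau|v_1|}\sum_{j}\nu_{ij}a_l}\big)$, while the $v_2,v_3$ integrations contribute only a finite Gaussian constant. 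What remains is a scalar integral in $v_1$, and the crucial observation is that the bound $\frac{1}{\sum_{j}\nu_{ij}a_l}(1-e^{-\cdots})\le\frac{1}{\sum_{j}\nu_{ij}a_l}$ is uniform but not small, so one must split $\{v_1>0\}$ into $\{0<v_1<1/\tau\}$, $\{1/\tau<v_1<\tau\}$, $\{v_1>\tau\}$, using $1-e^{-z}\le1$ on the first and third pieces and $1-e^{-z}\le z$ on the middle one. These contribute, respectively, terms of order $\tfrac1\tau$, $\tfrac{\ln\tau}{\tau}$ (the logarithm coming from integrating the $1/v_1$ singularity over $[1/\tau,\tau]$), and $\tfrac1{\tau^2}$, the last made integrable by the Gaussian tail from Lemma \ref{estimate4}. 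Hence the gain part of $\int\phi_i^{\pm}\,dv$ is $\le C_{l,u}\tfrac{1+\ln\tau}{\tau}$, and the identical computation with $|v|^2\mathcal{M}_i$ gives the analogous bound for the $|v|^2$ moment.

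Combining, $\int_{\mathbb{R}^3}\phi_i\,dv\le\tfrac12 a_{i,u}+C_{l,u}\tfrac{1+\ln\tau}{\tau}$ and $\int_{\mathbb{R}^3}|v|^2\phi_i\,dv\le\tfrac12 c_{i,u}+C_{l,u}\tfrac{1+\ln\tau}{\tau}$; since $\tfrac{1+\ln\tau}{\tau}\to0$ as $\tau\to\infty$, choosing $\tau$ so large that $C_{l,u}\tfrac{1+\ln\tau}{\tau}\le\min\{\tfrac12 a_{i,u},\tfrac12 c_{i,u}\}$ closes both inequalities; the $|v|^2$ case is word-for-word identical to the first with $a$ replaced by $c$ and Lemma \ref{estimate4} applied to $|v|^2\mathcal{M}_i$.

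The main obstacle is exactly the gain term: a crude uniform estimate does not deliver the required smallness, so the three-region decomposition of the $v_1$-integral — with the unavoidable $\ln\tau$ produced by integrating $1/v_1$ over $[1/\tau,\tau]$ — is genuinely needed, and it rests on having both the Gaussian decay of $\mathcal{M}_i$ from Lemma \ref{estimate4} (to control large $v_1$) and the two-sided bounds on $\nu_i$ (to decouple the prefactor from the exponential decay rate). Everything else is bookkeeping.
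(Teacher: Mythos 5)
Your proposal is correct and follows essentially the same route as the paper: drop the damping exponential on the boundary term to get the $\tfrac12 a_{i,u}$ (resp. $\tfrac12 c_{i,u}$) budget, bound the Maxwellian gain term via the Gaussian envelope of Lemma \ref{estimate4} and the lower bound $\nu_i\geq\sum_j\nu_{ij}a_l$, and split the $v_1$-integral over $\{0<v_1<1/\tau\}$, $\{1/\tau<v_1<\tau\}$, $\{v_1>\tau\}$ to extract the $C_{l,u}(1+\ln\tau)/\tau$ smallness. Your writing of the boundary contribution as $\tfrac12 a_{i,u}$ (rather than the paper's $\tfrac{a_u}{2}$) is in fact the form the final comparison with $a_{i,u}$ actually requires.
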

\begin{lemma}\label{estimate7} Let $f\in\Omega$. 
	Assume $f_{i,L}$ and $f_{i,R}$ satisfy the assumptions in Theorem \ref{main}.
	Then for $j=2,3$, we have
\begin{align*}
\bigg|\int_{\mathbb{R}^3}\phi_iv_jdv\bigg|\leq C_{l,u}\bigg(\frac{\ln{\tau}+1}{\tau}\bigg).
\end{align*}
\begin{proof} We consider this only for $\phi_i^+$ because the other case can be proved by similar ways. Integrating $\phi^+_i$ with respect to $v_2dv_2dv_3$, we have
\begin{align*}
\int_{\mathbb{R}^2}\phi^+_iv_2dv_2dv_3 =&\ e^{-\frac{1}{\tau|v_1|}\int_0^x \nu_i(y)dy}\int_{\mathbb{R}^2}f_{i,L}(v)v_2dv_2dv_3\\
& + \frac{1}{\tau|v_1|}\int_0^x e^{-\frac{1}{\tau|v_1|}\int_y^x \nu_i(z)dz} \nu_i(y)\bigg(\int_{\mathbb{R}^2}\mathcal{M}_iv_2dv_2dv_3\bigg)dy.
\end{align*}
By our assumption on $f_{i,L}$, it can be reduced to the following:
\begin{align}\label{computation2}
\int_{\mathbb{R}^2}\phi^+_iv_2dv_2dv_3=\frac{1}{\tau|v_1|}\int_0^x e^{-\frac{1}{\tau|v_1|}\int_y^x\nu_i(z)dz}\nu_i(y) \bigg(\int_{\mathbb{R}^2}\mathcal{M}_iv_2dv_2dv_3\bigg)dy.
\end{align}
As in the computation in (\ref{computation}), we see
\begin{align*}
\int_{\mathbb{R}^2}\mathcal{M}_iv_2dv_2dv_3 &\leq C_{l,u}e^{-C_{l,u}|v_1|^2}\int_{\mathbb{R}^2}e^{-C_{l,u}(|v_2|^2+|v_3|^2)}|v_2|dv_2dv_3\\
&\leq C_{l,u}e^{-C_{l,u}|v_1|^2}.
\end{align*}
Substituting this in (\ref{computation2}) and then integrating on $v_1>0$, we get
\begin{align*}
\int_{\mathbb{R}^3}\phi^+_iv_2dv &\leq C_{l,u}\int_0^x\int_{v_1>0}\frac{1}{\tau|v_1|}e^{-\frac{1}{\tau|v_1|}\int_y^x\nu_i(z)dz}e^{-C_{l,u}|v_1|^2}dv_1dy\\
&\leq C_{l,u}\bigg\{\frac{\ln{\tau}+1}{\tau}\bigg\}
\end{align*}
where we had the last inequality from (\ref{computation}) and (\ref{II}).
\end{proof}
\end{lemma}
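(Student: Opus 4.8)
The plan is to use the no-vertical-flow hypothesis of Theorem~\ref{main} to annihilate the free-transport contribution, and then to recognize that the remaining gain term reduces to exactly the one-dimensional integral $\overline{II}$ already controlled in the proof of Lemma~\ref{prop3}.

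First I would split $\phi_i=\phi_i^++\phi_i^-$ and, by the left--right symmetry of the two pieces, concentrate on $\int_{\mathbb{R}^3}\phi_i^+v_j\,dv$. Integrating $\phi_i^+$ against $v_j$ \emph{first} in the transverse variables $(v_2,v_3)$ is the key move: the attenuation factor $e^{-\frac1{\tau|v_1|}\int_0^x\nu_i(y)\,dy}$ depends on $(v_1,x)$ only, so the boundary-data part of $\phi_i^+$ contributes $e^{-\frac1{\tau|v_1|}\int_0^x\nu_i(y)\,dy}\int_{\mathbb{R}^2}f_{i,L}(v)\,v_j\,dv_2dv_3$, which vanishes identically by the hypothesis $\int_{\mathbb{R}^2}f_{i,L}v_j\,dv_2dv_3=0$. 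Hence only the Maxwellian term survives:
\[
\int_{\mathbb{R}^2}\phi_i^+v_j\,dv_2dv_3=\frac1{\tau|v_1|}\int_0^x e^{-\frac1{\tau|v_1|}\int_y^x\nu_i(z)\,dz}\,\nu_i(y)\Big(\int_{\mathbb{R}^2}\mathcal{M}_iv_j\,dv_2dv_3\Big)dy.
\]

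Next I would bound the inner moment. Since $\mathcal{M}_i$ is $n_i$ times a Gaussian centered at $U_i$, the partial moment $\int_{\mathbb{R}^2}\mathcal{M}_iv_j\,dv_2dv_3$ equals a Gaussian in $v_1$ times a constant multiple of $n_i\,(U_i)_j$; using the upper bounds for $n_i,|U_i|,T_i$ and the lower bound for $T_i$ from Lemmas~\ref{ub} and~\ref{lb}, exactly as in Lemma~\ref{estimate4}, this is dominated in absolute value by $C_{l,u}e^{-C_{l,u}|v_1|^2}$. Inserting this, using $\nu_i\le C_{l,u}$ together with $\nu_i\ge\sum_r\nu_{ir}a_l$ (cf.~\eqref{nul}) to replace the time-exponent by $e^{-\sum_r\nu_{ir}a_l(x-y)/(\tau|v_1|)}$, and integrating over $v_1>0$, I obtain
\[
\Big|\int_{\mathbb{R}^3}\phi_i^+v_j\,dv\Big|\le C_{l,u}\int_0^x\!\!\int_{v_1>0}\frac1{\tau|v_1|}\,e^{-\sum_r\nu_{ir}a_l(x-y)/(\tau|v_1|)}\,e^{-C_{l,u}|v_1|^2}\,dv_1\,dy.
\]
The right-hand side is, up to the constant, the quantity $\overline{II}$ estimated in the proof of Lemma~\ref{prop3}: splitting $v_1$ over $(0,1/\tau)$, $(1/\tau,\tau)$, $(\tau,\infty)$ bounds it by $C_{l,u}\big(\tfrac1\tau+\tfrac{\ln\tau}{\tau}+\tfrac1{\tau^2}\big)\le C_{l,u}\tfrac{\ln\tau+1}{\tau}$. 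Running the identical argument for $\phi_i^-$ (now using $\int_{\mathbb{R}^2}f_{i,R}v_j\,dv_2dv_3=0$) and adding the two yields the claim. The only genuinely substantive point is the exact cancellation of the boundary term, which is precisely where the no-vertical-flow hypothesis of Theorem~\ref{main} is needed; everything after that is a rerun of the $II$-estimate of Lemma~\ref{prop3}, so I do not expect a real obstacle.
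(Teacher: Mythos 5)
Your proposal is correct and follows essentially the same route as the paper: transverse integration first so the no-vertical-flow hypothesis kills the boundary term, a Gaussian-in-$v_1$ bound on the partial moment of $\mathcal{M}_i$ via Lemmas \ref{ub} and \ref{lb}, and then a reduction to the $\overline{II}$ estimate of Lemma \ref{prop3}. No gaps.
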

\begin{lemma}\label{prop4}
Let $f\in\Omega$. Assume $f_{i,L}$ and $f_{i,R}$ satisfy the assumptions in Theorem \ref{main}. Then, for sufficiently large $\tau>0$, we have
\begin{align*}
\bigg(\int_{\mathbb{R}^3}\phi_idv\bigg)\bigg(\int_{\mathbb{R}^3}\phi_i|v|^2dv\bigg)-\bigg|\int_{\mathbb{R}^3}\phi_ivdv\bigg|^2\geq\gamma_l.
\end{align*}
\begin{proof}
Applying the Cauchy-Schwarz inequality, we have
\begin{align*}
&\bigg(\int_{\mathbb{R}^3}\phi_idv\bigg)\bigg(\int_{\mathbb{R}^3}\phi_i|v|^2dv\bigg)-\bigg|\int_{\mathbb{R}^3}\phi_ivdv\bigg|^2\\
&\qquad\geq \bigg(\int_{\mathbb{R}^3}\phi_i|v|dv\bigg)^2-\bigg|\int_{\mathbb{R}^3}\phi_ivdv\bigg|^2\\
&\qquad\geq \bigg(\int_{\mathbb{R}^3}\phi_i|v_1|dv\bigg)^2-\bigg|\int_{\mathbb{R}^3}\phi_ivdv\bigg|^2.
\end{align*}
And we decompose the last term as
\begin{align*}
&\bigg(\int_{\mathbb{R}^3}\phi_i|v_1|dv\bigg)^2-\bigg|\int_{\mathbb{R}^3}\phi_ivdv\bigg|^2\\
&=\bigg(\int_{\mathbb{R}^3}\phi_i|v_1|dv\bigg)^2-\bigg[\bigg(\int_{\mathbb{R}^3}\phi_iv_1dv\bigg)^2+\underbracket{\bigg(\int_{\mathbb{R}^3}\phi_iv_2dv\bigg)^2+\bigg(\int_{\mathbb{R}^3}\phi_iv_3dv\bigg)^2}_{=:R}\bigg].\\
\end{align*}
In view of Lemma \ref{estimate7}, we have
\begin{align*}
R\leq C_{l,u}\bigg(\frac{\ln{\tau}+1}{\tau}\bigg).
\end{align*}
On the other hand, since
\begin{align*}
&\bigg(\int_{\mathbb{R}^3}\phi_i|v_1|dv\bigg)^2-\bigg(\int_{\mathbb{R}^3}\phi_iv_1dv\bigg)^2\\
&\qquad\geq \bigg(\int_{\mathbb{R}^3}\phi_i(|v_1|+v_1)dv\bigg)\bigg(\int_{\mathbb{R}^3}\phi_i(|v_1|-v_1)dv\bigg)\\
&\qquad=4 \bigg(\int_{v_1>0}\phi_i|v_1|dv\bigg)\bigg(\int_{v_1<0}\phi_i|v_1|dv\bigg),
\end{align*}
Lemma \ref{l} implies that
\begin{align*}
&\bigg(\int_{\mathbb{R}^3}\phi_i|v_1|dv\bigg)^2-\bigg(\int_{\mathbb{R}^3}\phi_iv_1dv\bigg)^2\\
&\qquad\geq 4\bigg(\int_{v_1>0}e^{-\frac{1}{\tau|v_1|}\int_0^x\nu_i(y)dy}f_{i,L}(v)|v_1|dv\bigg)\bigg(\int_{v_1<0}e^{-\frac{1}{\tau|v_1|}\int_x^1\nu_i(y)dy}f_{i,R}(v)|v_1|dv\bigg)\\
&\qquad\geq\frac{1}{4}\bigg(\int_{v_1>0}f_{i,L}(v)|v_1|dv\bigg)\bigg(\int_{v_1<0}f_{i,R}(v)|v_1|dv\bigg)\\
&\qquad= 4\gamma_l.
\end{align*}
In conclusion, for sufficiently large $\tau>0$, we obtain
\begin{align*}
&\bigg(\int_{\mathbb{R}^3}\phi_idv\bigg)\bigg(\int_{\mathbb{R}^3}\phi_i|v|^2dv\bigg)-\bigg|\int_{\mathbb{R}^3}\phi_ivdv\bigg|^2 \\
&\qquad\geq 4\gamma_l-C_{l,u}\bigg(\frac{\ln{\tau}+1}{\tau}\bigg)\\
&\qquad\geq \gamma_l.
\end{align*}
\end{proof}
\end{lemma}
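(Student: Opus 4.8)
The plan is to reduce the left-hand side to a one-dimensional Gram-type determinant in the variable $v_1$, to control the transverse momenta by the no-vertical-flow hypothesis, and to extract a quantitative lower bound from the boundary fluxes alone. Throughout I fix the index $i$ and work with the three macroscopic moments of $\phi_i$.

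First I would apply the Cauchy--Schwarz inequality to write
\begin{align*}
\bigg(\int_{\mathbb{R}^3}\phi_i dv\bigg)\bigg(\int_{\mathbb{R}^3}\phi_i|v|^2dv\bigg)\geq\bigg(\int_{\mathbb{R}^3}\phi_i|v|dv\bigg)^2\geq\bigg(\int_{\mathbb{R}^3}\phi_i|v_1|dv\bigg)^2,
\end{align*}
so that the quantity to be estimated is at least $\big(\int\phi_i|v_1|dv\big)^2-\big|\int\phi_i v\,dv\big|^2$. Expanding $\big|\int\phi_i v\,dv\big|^2$ into the squares of its three components, I would split this into the diagonal term $\big(\int\phi_i|v_1|dv\big)^2-\big(\int\phi_i v_1 dv\big)^2$ and the transverse remainder $\big(\int\phi_i v_2dv\big)^2+\big(\int\phi_i v_3dv\big)^2$; by Lemma \ref{estimate7} the latter is bounded by $C_{l,u}\big(\tfrac{\ln\tau+1}{\tau}\big)^2$, and this is exactly the point at which the hypothesis $\int_{\mathbb{R}^2}f_{i,L}v_jdv_2dv_3=\int_{\mathbb{R}^2}f_{i,R}v_jdv_2dv_3=0$ $(j=2,3)$ enters.

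For the diagonal term I would factor, using $\phi_i\ge0$,
\begin{align*}
\bigg(\int_{\mathbb{R}^3}\phi_i|v_1|dv\bigg)^2-\bigg(\int_{\mathbb{R}^3}\phi_i v_1 dv\bigg)^2=4\bigg(\int_{v_1>0}\phi_i|v_1|dv\bigg)\bigg(\int_{v_1<0}\phi_i|v_1|dv\bigg),
\end{align*}
and then retain in each half-space integral only the boundary contribution $e^{-\frac{1}{\tau|v_1|}\int\nu_i}f_{i,L/R}$ present in $\phi_i^{\pm}$. Invoking Lemma \ref{l} gives $\int_{v_1>0}\phi_i|v_1|dv\ge\tfrac14\int_{v_1>0}f_{i,L}|v_1|dv$ and the analogous bound on the left face, whence the diagonal term is at least $\tfrac14\big(\int_{v_1>0}f_{i,L}|v_1|dv\big)\big(\int_{v_1<0}f_{i,R}|v_1|dv\big)=4\gamma_{i,l}\ge 4\gamma_l$.

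Combining the two pieces, the left-hand side is at least $4\gamma_l-C_{l,u}\big(\tfrac{\ln\tau+1}{\tau}\big)^2$, which exceeds $\gamma_l$ once $\tau$ is taken large enough depending only on the data in (\ref{parameters1})--(\ref{parameters3}). The only genuinely delicate point is the transverse remainder: one must verify that the moments $\int\phi_i v_j dv$ $(j=2,3)$ are of strictly lower order, and this rests both on the structural assumption on the inflow data, which annihilates the boundary part of these moments, and on the Gaussian decay of $\mathcal{M}_i$ together with the time-decay of the transport semigroup exploited in Lemma \ref{estimate7}; everything else follows directly from the positivity and moment bounds already built into $\Omega$.
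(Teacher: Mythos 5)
Your proposal is correct and follows essentially the same route as the paper: Cauchy--Schwarz to reduce to the $|v_1|$-moment, the split into the diagonal term (factored as $4\int_{v_1>0}\phi_i|v_1|\,dv\int_{v_1<0}\phi_i|v_1|\,dv$ and bounded below via Lemma \ref{l} by $4\gamma_{i,l}\ge 4\gamma_l$) plus the transverse remainder controlled by Lemma \ref{estimate7}. Your bookkeeping is in fact slightly more careful than the paper's (you keep the square on the remainder bound and pass through $\gamma_{i,l}$ explicitly), but the argument is identical in substance.
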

%%%%%%%%%%%%%%%%%%%%%%%%%%%%%%%%%%%%%%%%%%%%%%%%%%%%%%%%%%%%%%%%%%%%%%%%%%%%%%%%%%%%%%%%%%%%%%%%%%%%%%%%%%
%
%
%
%
%%%%%%%%%%%%%%%%%%%%%%%%%%%%%%%%%%%%%%%%%%%%%%%%%%%%%%%%%%%%%%%%%%%%%%%%%%%%%%%%%%%%%%%%%%%%%%%%%%%%%%%%%
\section{$\Phi$ is contractive in $\Omega$} It remains  to show the solution map $\Phi$ and $\widetilde\Phi$ are contraction maps. We start with the esitmates for the single component macroscopic fields and global macroscopic fields, which holds commonly for the first and second model. 
\begin{lemma}\label{lemma45} Let $f=(f_1,f_2,f_3,f_4),\ g= (g_1,g_2,g_3,g_4)\in\Omega$. Then we have:\newline
	\noindent(1) The single component macroscopic parameters satisfy
\begin{align*}
|n_f^{(i)}-n_g^{(i)}|,~ |U_f^{(i}-U_g^{(i)}|, ~|T_f^{(i)}-T_g^{(i)}|\leq C_{l,u}\sup\limits_{x\in[0,1]}||f_i-g_i||_{L_2^1}.
\end{align*}
\noindent (2) The global macroscopic parameters satisfy
\begin{align*}
|n_f-n_g|,|U_f-U_g|,|T_f-T_g|\leq C_{l,u}d(f,g).
\end{align*}
\begin{proof}
\noindent (1) The first estimate is straightforward:
\begin{align*}
|n_f^{(i)}-n_g^{(i)}|=\int_{\mathbb{R}^3}|f_i-g_i|dv\leq \sup\limits_{x\in[0,1]}||f_i-g_i||_{L_2^1}.
\end{align*}
For the second estimate, we use $\rho_f^{(i)}\geq m_ia_{i,l}$ to get
\begin{align*}
|U_f^{(i)}-U_g^{(i)}|&\leq\frac{1}{\rho_f^{(i)}}|\rho_f^{(i)}U_f^{(i)}-\rho_g^{(i)}U_g^{(i)}|+\frac{1}{\rho_f^{(i)}}|\rho_f^{(i)}-\rho_g^{(i)}|\ |U_g^{(i)}|\\
&\leq \frac{m_i}{\rho_f^{(i)}}\int_{\mathbb{R}^3}|f_i-g_i|\ |v|dv+\frac{m_i|U_g^{(i)}|}{\rho_f^{(i)}}\int_{\mathbb{R}^3}|f_i-g_i|dv\\
&\leq C_{l,u}\sup\limits_{x\in[0,1]}||f_i-g_i||_{L_2^1}.
\end{align*}
For the third estimate, we decompose 
\begin{align*}
|T_f^{(i)}-T_g^{(i)}|&\leq \frac{1}{n_f^{(i)}}|n_f^{(i)}T_f^{(i)}-n_g^{(i)}T_g^{(i)}|+\frac{1}{n_f^{(i)}}|n_f^{(i)}-n_g^{(i)}|\ |T_g^{(i)}|\\
&\leq \frac{m_i}{3kn_f^{(i)}}\int_{\mathbb{R}^3}\bigg|f_i|v-U_f^{(i)}|^2-g_i|v-U_g^{(i)}|^2\bigg|dv +\frac{m_i|T_g^{(i)}|}{n_f^{(i)}}\int_{\mathbb{R}^3}|f_i-g_i|dv\\
&=I+II.
\end{align*}
Then, $n_f^{(i)}\geq a_{i,l}$ and Lemma \ref{scmp}  gives 
$$II\leq C_{l,u}\sup\limits_{x\in[0,1]}||f_i-g_i||_{L_2^1},$$
and
\begin{align*}
I&\leq\frac{m_i}{ka_{i,l}}\int_{\mathbb{R}^3}\bigg|f_i|v-U_f^{(i)}|^2-g_i|v-U_g^{(i)}|^2\bigg|dv\\
&\leq \frac{m_i}{ka_{i,l}}\int_{\mathbb{R}^3}\bigg|(f_i-g_i)|v-U_f^{(i)}|^2+g_i(|v-U_f^{(i)}|^2-|v-U_g^{(i)}|^2)\bigg|dv\\
&=\frac{m_i}{ka_{i,l}}\int_{\mathbb{R}^3}\bigg|(f_i-g_i)|v-U_f^{(i)}|^2+g_i(2v-U_f^{(i)}-U_g^{(i)})(U_f^{(i)}-U_g^{(i)})\bigg|dv\\
&\leq C_{l,u}\int_{\mathbb{R}^3}|f_i-g_i|(1+|v|^2)+|g_i|(1+|v|)|U_f^{(i)}-U_g^{(i)}|dv\\
&\leq C_{l,u}\sup\limits_{x\in[0,1]}||f_i-g_i||_{L_2^1}.
\end{align*}
\noindent (2) The estimates for the global macroscopic parameters follows directly from (1). We omit the proof.
\end{proof}
\end{lemma}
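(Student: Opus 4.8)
The plan is to estimate the single-component fields $n^{(i)}$, $U^{(i)}$, $T^{(i)}$ in that order, since each is defined through the previous ones, and then read off the global bounds from \eqref{global}. Throughout I will use that $f,g\in\Omega$ forces $n_f^{(i)},n_g^{(i)}\ge a_{i,l}$ together with the moment bounds in $(\mathcal{B})$, that Lemma~\ref{scmp} supplies uniform a priori bounds $|U^{(i)}|,\,T^{(i)}\le C_{l,u}$ for both $f$ and $g$, and the elementary inequality $|v|\le 1+|v|^2$, so that $\int_{\mathbb{R}^3}|v|\,|f_i-g_i|\,dv\le\|f_i-g_i\|_{L_2^1}$. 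The bound for $n^{(i)}$ is then immediate, since $|n_f^{(i)}-n_g^{(i)}|=\big|\int_{\mathbb{R}^3}(f_i-g_i)\,dv\big|\le\|f_i-g_i\|_{L_2^1}$.

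For $U^{(i)}=\big(\int_{\mathbb{R}^3} v f_i\,dv\big)\big/\big(\int_{\mathbb{R}^3} f_i\,dv\big)$ I add and subtract to write
\[
U_f^{(i)}-U_g^{(i)}=\frac{1}{n_f^{(i)}}\int_{\mathbb{R}^3} v(f_i-g_i)\,dv+\frac{n_g^{(i)}-n_f^{(i)}}{n_f^{(i)}}\,U_g^{(i)},
\]
and bound the first term with $n_f^{(i)}\ge a_{i,l}$ and the weighted $L^1$-estimate above, the second with $|U_g^{(i)}|\le C_{l,u}$ and the $n^{(i)}$-bound just proved. For $T^{(i)}$, using \eqref{single} in the form $3kn^{(i)}T^{(i)}=m_i\int_{\mathbb{R}^3}|v-U^{(i)}|^2 f_i\,dv$, I split
\[
|T_f^{(i)}-T_g^{(i)}|\le\frac{1}{n_f^{(i)}}\big|n_f^{(i)}T_f^{(i)}-n_g^{(i)}T_g^{(i)}\big|+\frac{|T_g^{(i)}|}{n_f^{(i)}}\big|n_f^{(i)}-n_g^{(i)}\big|.
\]
The second term is $\le C_{l,u}\|f_i-g_i\|_{L_2^1}$ by Lemma~\ref{scmp} and the $n^{(i)}$-bound; in the first I decompose the moment difference as $\int(f_i-g_i)|v-U_f^{(i)}|^2\,dv+\int g_i\big(|v-U_f^{(i)}|^2-|v-U_g^{(i)}|^2\big)\,dv$, controlling the first piece via $|v-U_f^{(i)}|^2\le C_{l,u}(1+|v|^2)$ and the second, after writing $|v-U_f^{(i)}|^2-|v-U_g^{(i)}|^2=(2v-U_f^{(i)}-U_g^{(i)})\cdot(U_g^{(i)}-U_f^{(i)})$, by $C_{l,u}\,|U_f^{(i)}-U_g^{(i)}|\int_{\mathbb{R}^3} g_i(1+|v|)\,dv\le C_{l,u}|U_f^{(i)}-U_g^{(i)}|$, which the previous step already controls. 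Taking the supremum over $x\in[0,1]$ gives part (1).

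Part (2) follows from part (1) by the same add-and-subtract manipulation applied to the global definitions \eqref{global}. Indeed $n_f-n_g=\sum_{i=1}^{4}(n_f^{(i)}-n_g^{(i)})$ gives $|n_f-n_g|\le C_{l,u}d(f,g)$ at once; for $U=\rho^{-1}\sum_{i}\rho^{(i)}U^{(i)}$ and $nkT=\sum_{i}n^{(i)}kT^{(i)}+\tfrac{1}{3}\sum_{i}\rho^{(i)}(|U^{(i)}|^2-|U|^2)$ one uses that $\rho=\sum_{i}m_i n^{(i)}\ge\sum_{i}m_i a_{i,l}$ is bounded below and that $\rho^{(i)},U^{(i)},T^{(i)},U$ are all bounded above (Lemma~\ref{scmp} and Lemma~\ref{gm}), so that every difference produced by the telescoping is one of $n_f^{(i)}-n_g^{(i)}$, $U_f^{(i)}-U_g^{(i)}$, $T_f^{(i)}-T_g^{(i)}$, or $U_f-U_g$ --- each already dominated by $C_{l,u}d(f,g)$, provided one estimates $U$ before $T$.

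I expect the temperature estimate to be the only step that is not pure bookkeeping: since $T^{(i)}$ depends quadratically on the bulk velocity, one has to peel off the $U^{(i)}$-dependence first and invoke the $U^{(i)}$-bound, and one has to check that the $|v|^2$ weight produced by $|v-U^{(i)}|^2$ is matched exactly by the $L_2^1$ norm --- which is the reason the solution space is modeled on $L_2^1$ rather than $L^1$. Everything else reduces to the uniform upper and lower bounds already supplied by membership in $\Omega$ and by Lemma~\ref{scmp}.
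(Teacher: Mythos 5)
Your proposal is correct and follows essentially the same route as the paper: the trivial bound for $n^{(i)}$, the add-and-subtract decomposition for $U^{(i)}$ using the lower bound $n_f^{(i)}\geq a_{i,l}$, and the splitting of the temperature difference into the moment difference plus a density-difference term, with the quadratic expansion $|v-U_f^{(i)}|^2-|v-U_g^{(i)}|^2=(2v-U_f^{(i)}-U_g^{(i)})\cdot(U_g^{(i)}-U_f^{(i)})$ controlled by the a priori bounds of Lemma \ref{scmp}. The paper omits the proof of part (2) entirely, so your telescoping sketch there is, if anything, slightly more explicit than the original.
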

And then, we consider the estimates for the reactive parameters and the collision frequency for the first model (\ref{system}).
\begin{lemma}\label{lemma455} Let $f=(f_1,f_2,f_3,f_4),\ g= (g_1,g_2,g_3,g_4)\in\Omega$. Then we have
	\begin{align*}
	|n_{f,i}-n_{g,i}|,|U_{f,i}-U_{g,i}|,|T_{f,i}-T_{g,i}|,|\nu_{f,i}-\nu_{g,i}|\leq C_{l,u}\sum_{j=1}^{4}\sup\limits_{x\in[0,1]}||f_j-g_j||_{L_2^1}.
	\end{align*}
\end{lemma}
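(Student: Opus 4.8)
The plan is to propagate the Lipschitz estimates of Lemma \ref{lemma45} through the \emph{explicit} algebraic formulas \eqref{nu fast}, \eqref{B-1} and the definition of $\mathcal{S}$, exploiting the uniform two-sided bounds already established in Lemmas \ref{scmp}, \ref{gm}, \ref{nl}, \ref{ub} and \ref{lb}, all of which hold uniformly over $\Omega$. Throughout write $D:=\sum_{j=1}^4\sup_{x\in[0,1]}\|f_j-g_j\|_{L_2^1}$ and decorate each macroscopic quantity with a subscript $f$ or $g$ according to the distribution it is built from; by Lemma \ref{lemma45} one already has $|n_f^{(j)}-n_g^{(j)}|,\ |U_f^{(j)}-U_g^{(j)}|,\ |T_f^{(j)}-T_g^{(j)}|,\ |U_f-U_g|,\ |T_f-T_g|\le C_{l,u}D$.

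First I would handle the collision frequencies. In \eqref{nu fast} each $\nu_i$ is a finite linear combination of the $n^{(j)}$ together with terms of the form $(\text{const})\cdot\Gamma(\tfrac32,\tfrac{\Delta E}{kT})n^{(j)}$ and $(\text{const})\cdot\Gamma(\tfrac32,\tfrac{\Delta E}{kT})e^{\Delta E/kT}n^{(j)}$. By Lemma \ref{gm} we have $T_f,T_g\in[T_l,T_u]$, a compact interval bounded away from $0$, and the maps $T\mapsto\Gamma(\tfrac32,\tfrac{\Delta E}{kT})$ and $T\mapsto e^{\Delta E/kT}$ are $C^1$, hence Lipschitz, there; combining this with the uniform bounds on the $n^{(j)}$ and a telescoping argument gives $|\nu_{f,i}-\nu_{g,i}|\le C_{l,u}\big(\sum_j|n_f^{(j)}-n_g^{(j)}|+|T_f-T_g|\big)\le C_{l,u}D$. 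The same reasoning applied to $\mathcal{S}$ — a sum of products of the fixed constant $\nu_{12}^{34}$, the Lipschitz functions $\Gamma(\tfrac32,\tfrac{\Delta E}{kT})$ and $e^{\Delta E/kT}$, and the bounded quantities $n^{(j)}$ — yields $|\mathcal{S}_f-\mathcal{S}_g|\le C_{l,u}D$.

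Next I would insert these into \eqref{B-1}. The key elementary tool is the identity $\frac{A_f}{B_f}-\frac{A_g}{B_g}=\frac{(A_f-A_g)B_g+A_g(B_g-B_f)}{B_fB_g}$, valid whenever $B_f,B_g$ are bounded below and $A_g$ is bounded above; every denominator appearing below ($\nu_i$, $n_i$, $m_in_i$) is uniformly bounded below by \eqref{nul} and Lemma \ref{nl}, and every quantity appearing in a numerator is uniformly bounded above by Lemmas \ref{scmp}, \ref{gm}, \ref{ub} and by \eqref{su}. Applying the identity to $n_i=n^{(i)}+\tfrac{\lambda_i}{\nu_i}\mathcal{S}$ gives $|n_{f,i}-n_{g,i}|\le C_{l,u}D$; dividing the defining relation for $U_i$ in \eqref{B-1} by $m_in_i$ expresses $U_i$ as an explicit rational function of $n^{(i)},U^{(i)},n^{(j)},U^{(j)},U,\nu_i,\mathcal{S}$ and $n_i$, so the same telescoping yields $|U_{f,i}-U_{g,i}|\le C_{l,u}D$ (with $|U_{f,i}|,|U_{g,i}|\le C_{l,u}$ by Lemma \ref{ub}); and dividing the defining relation for $T_i$ in \eqref{B-1} by $\tfrac32 kn_i$ expresses $T_i$ through the quantities already controlled together with the bounded smooth functions $\Gamma(\tfrac32,\tfrac{\Delta E}{kT})$, $e^{\pm\Delta E/kT}$ and $(\Delta E/kT)^{3/2}e^{-\Delta E/kT}/\Gamma(\tfrac32,\tfrac{\Delta E}{kT})$ — the last being bounded and Lipschitz on $[T_l,T_u]$ precisely because $\Gamma(\tfrac32,\cdot)$ is bounded away from $0$ there — whence $|T_{f,i}-T_{g,i}|\le C_{l,u}D$.

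The argument is essentially bookkeeping, and I do not expect a genuine obstacle once Section 4 is in hand. The only points requiring care are keeping \emph{all} denominators ($\nu_i$, $n_i$, $\Gamma(\tfrac32,\tfrac{\Delta E}{kT})$) uniformly bounded below so the quotient identity can be iterated, and checking that every special function of temperature occurring in \eqref{nu fast}, \eqref{B-1} and $\mathcal{S}$ is $C^1$, hence Lipschitz, on the compact range $[T_l,T_u]$; both are already supplied by Lemmas \ref{gm}, \ref{nl}, \ref{ub} and \ref{lb}, and the order of the estimates ($\nu_i$ and $\mathcal{S}$, then $n_i$, then $U_i$, then $T_i$) must be respected since each later quantity depends on the earlier ones.
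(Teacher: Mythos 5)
Your proposal is correct and follows essentially the same route as the paper: first obtain Lipschitz bounds for $\nu_i$ and $\mathcal{S}$ from the $C^1$ dependence of $\Gamma(\tfrac32,\cdot)$ and $e^{\Delta E/k(\cdot)}$ on the compact temperature range $[T_l,T_u]$ together with Lemma \ref{lemma45}, then telescope through the explicit formulas \eqref{B-1} for $n_i$, $U_i$, $T_i$ using the uniform lower bounds on the denominators from \eqref{nul} and Lemma \ref{nl}. Your explicit treatment of the $T_i$ case (which the paper omits as ``almost identical'') and the remark that $\Gamma(\tfrac32,\cdot)$ stays bounded away from zero on the relevant compact interval are both accurate and consistent with the paper's argument.
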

	\begin{proof}
		We first establish the following claim:
			\begin{align}\label{S}
		|\mathcal{S}_f-\mathcal{S}_g|\leq C_{l,u}d(f,g),
		\end{align}
		and
			\begin{align}\label{SS}
		|\nu_{f,i}-\nu_{g,i}|\leq C_{l,u}\sum_{j=1}^{4}\sup\limits_{x\in[0,1]}||f_j-g_j||_{L_2^1}.
		\end{align}
		For simplicity, we donote $K(x)=\Gamma(3/2,x)e^x$  and compute
		\begin{align}\label{S2}
		\begin{split}
		|\mathcal{S}_f-\mathcal{S}_g|\leq& C_{l,u}\bigg[\ \bigg|K\bigg(\frac{\Delta E}{kT_f}\bigg)-K\bigg(\frac{\Delta E}{kT_g}\bigg)\bigg|n_f^{(3)}n_f^{(4)}+K\bigg(\frac{\Delta E}{kT_g}\bigg)|n_f^{(3)}-n_g^{(3)}|n_f^{(4)}\\
		&+K\bigg(\frac{\Delta E}{kT_g}\bigg)n_g^{(3)}|n_f^{(4)}-n_g^{(4)}|\bigg]\\
		&+C_{l,u}\bigg[\ \bigg|\Gamma\bigg(\frac{3}{2},\frac{\Delta E}{kT_f}\bigg)-\Gamma\bigg(\frac{3}{2},\frac{\Delta E}{kT_g}\bigg)\bigg|n_f^{(3)}n_f^{(4)}+\Gamma\bigg(\frac{3}{2},\frac{\Delta E}{kT_g}\bigg)|n_f^{(3)}-n_g^{(3)}|n_g^{(4)}\\
		&+\Gamma\bigg(\frac{3}{2},\frac{\Delta E}{kT_g}\bigg)\bigg)n_g^{(3)}|n_f^{(4)}-n_g^{(4)}|\bigg].
		\end{split}
		\end{align}
		Since $f, g\in\Omega$, $T_f$ and $T_g$ are bounded from below and above by constants defined in terms of constants given in (\ref{parameters1}), (\ref{parameters2}) and (\ref{parameters3}) (Lemma \ref{scmp}). Therefore, since $K$ and $\Gamma$ are continuously differentiable, we derive
		\begin{align*}
		K\bigg(\frac{\Delta E}{kT_g}\bigg),\ \Gamma\bigg(\frac{3}{2},\frac{\Delta E}{kT_g}\bigg)\leq C_{l,u}
		\end{align*}
		and
		\begin{align*}
		&\bigg|K\bigg(\frac{\Delta E}{kT_f}\bigg)-K\bigg(\frac{\Delta E}{kT_g}\bigg)\bigg|,\  \bigg|\Gamma\bigg(\frac{3}{2},\frac{\Delta E}{kT_f}\bigg)-\Gamma\bigg(\frac{3}{2},\frac{\Delta E}{kT_g}\bigg)\bigg|\\
		&\qquad\leq C_{l,u}\bigg| \frac{\Delta E}{kT_f}-\frac{\Delta E}{kT_g} \bigg|\quad \text{by the Mean value theorem}\\
		&\qquad\leq C_{l,u}\bigg| \frac{\Delta E(T_g-T_f)}{kT_fT_g}\bigg|\\
		&\qquad\leq C_{l,u}|T_f-T_g|
		\end{align*}
		Combining these with (\ref{S2}) and Lemma \ref{lemma45} proves the first estimate of the claim. Once (\ref{S}) is established, the second estimate of the claim follows directly from the definition of $\nu_i$ and Lemma \ref{lemma45}.
		
		From (\ref{S}) and (\ref{SS}), we have
		\begin{align*}
		|n_{f,i}-n_{g,i}|\leq& |n_f^{(i)}-n_g^{(i)}|+|\mathcal{S}_f|\bigg|\frac{\nu_{f,i}-\nu_{g,i}}{\nu_{f,i}\nu_{g,i}}\bigg|+\frac{1}{\nu_{g,i}}|\mathcal{S}_f-\mathcal{S}_g|\\
		\leq& C_{l,u}\sum_{j=1}^{4}\sup\limits_{x\in[0,1]}||f_j-g_j||_{L_2^1}.
		\end{align*}
	We recall the definition of $U_i$
		\begin{align*}
		U_i&=\frac{n^{(i)}}{n_i}U^{(i)}+\frac{2}{m_i\nu_i}\sum_{j=1}^{4}\chi_{ij}\mu_{ij}\frac{n^{(i)}}{n_i}n^{(j)}(U^{(j)}-U^{(i)})+\frac{\lambda_i}{n_i\nu_i}U\mathcal{S},
		\end{align*}
	and use (\ref{S}), (\ref{SS}) and Lemma \ref{scmp} to get
		\begin{align*}
		|U_{f,i}-U_{g,i}|\leq C_{l,u}\sum_{j=1}^{4}\sup\limits_{x\in[0,1]}||f_j-g_j||_{L_2^1}.
		\end{align*}
 The proof for the remaining estimates are almost identical, we omit it.
	%		\begin{align*}
	%		|T_{f,i}-T_{g,i}|,|\nu_{f,i}-\nu_{g,i}|\leq C_{l,u}\sum_{j=1}^{4}\sup\limits_{x\in[0,1]}||f_j-g_j||_{L_2^1}.
	%		\end{align*}	
		\end{proof}
	And then, we consider the estimates for the reactive parameters and the collision frequency for the first model (\ref{system}).
\begin{lemma} Let $f=(f_1,f_2,f_3,f_4),\ g= (g_1,g_2,g_3,g_4)\in\Omega$. Then we have
	\begin{align*}
	|\tilde n_{f,i}-\tilde n_{g,i}|,|\tilde U_{f}-\tilde U_{g}|,|\tilde T_{f}-\tilde T_{g}|,|\tilde\nu_{f,i}-\tilde\nu_{g,i}|\leq C_{l,u}\sum_{j=1}^{4}\sup\limits_{x\in[0,1]}||f_j-g_j||_{L_2^1}.
	\end{align*}
\end{lemma}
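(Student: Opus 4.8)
The plan is to bound the four families of fast--reaction parameters in the order in which Section~2 constructs them, taking Lemma~\ref{lemma45} (the Lipschitz dependence on $f$ of the single--component and global fields $n_f^{(i)},U_f^{(i)},T_f^{(i)},n_f,U_f,T_f$) and the a priori bounds of Section~4 as the standing inputs; the latter confine $\tilde n_{f,1},\dots,\tilde n_{f,4},\tilde U_f,\tilde T_f,\tilde\nu_{f,i}$ to a fixed product of intervals determined by the quantities in \eqref{parameters1}, \eqref{parameters2} and \eqref{parameters3}, on which every defining relation is smooth with $C_{l,u}$--controlled derivatives. \emph{Step~1} ($\tilde\nu_i$ and $\tilde U$): the only nonlinear ingredient in \eqref{nu slow} is $e^{-\Delta E/kT}$, and since $T_f$ is bounded above and below by $C_{l,u}$ constants (Lemmas~\ref{scmp} and \ref{gm}) the map $T\mapsto e^{-\Delta E/kT}$ is $C_{l,u}$--Lipschitz there, whence
\[
|\tilde\nu_{f,i}-\tilde\nu_{g,i}|\le C_{l,u}\Bigl(\sum_{j=1}^{4}|n_f^{(j)}-n_g^{(j)}|+|T_f-T_g|\Bigr)\le C_{l,u}\sum_{j=1}^{4}\sup_{x\in[0,1]}||f_j-g_j||_{L_2^1}
\]
by Lemma~\ref{lemma45}; one also records $\tilde\nu_{f,i}\ge C_{l,u}>0$. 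Since by \eqref{B-2} the field $\tilde U$ is the quotient of $\sum_i\tilde\nu_im_in^{(i)}U^{(i)}$ by $\sum_i\tilde\nu_im_in^{(i)}$, the denominator being bounded below by a positive $C_{l,u}$, expanding the difference over a common denominator and telescoping each triple product gives $|\tilde U_f-\tilde U_g|\le C_{l,u}\sum_j\sup_{x\in[0,1]}||f_j-g_j||_{L_2^1}$ from Step~1 and Lemma~\ref{lemma45}.

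The one genuinely delicate point---and the main obstacle---is \emph{Step~2}, the parameter $\tilde n_1$, which \eqref{B-33} defines only implicitly. Write $p_f:=(\bold n_f,\bold n_f,\boldsymbol{\tilde\nu}_f,\boldsymbol{\tilde\nu}_f,\bold T_f,\bold V_f)$, $\bold V_f:=(|U_f^{(i)}-\tilde U_f|^2)_{i=1,2,3,4}$, so that by Lemma~\ref{lemma48} one has $\tilde n_{f,1}=G(p_f)$, where $G(p)=F_p^{-1}\bigl(\tfrac{3}{2}\log(\mu^{12}/\mu^{34})\bigr)$. I would prove that $G$ is $C_{l,u}$--Lipschitz on the product of intervals to which Section~4 confines these tuples, by differentiating the identity $F_p(G(p))=\tfrac{3}{2}\log(\mu^{12}/\mu^{34})$: at $z=G(p)$ one has $\partial_zF_p(z)\ge 1/z\ge C_{l,u}>0$ (every term of $\partial_zF_p$ is nonnegative, and $z=G(p)$ is bounded above as in Lemma~\ref{lemma48}), while by \eqref{B-2} the three fractional denominators of $F_p$ evaluated at $z=G(p)$ equal exactly $\tilde\nu_2\tilde n_2$, $\tilde\nu_3\tilde n_3$, $\tilde\nu_4\tilde n_4$, and by \eqref{Temp} the energy denominator equals $\tfrac{3}{2}k\bigl(\sum_i\tilde\nu_in^{(i)}\bigr)\tilde T$---all bounded below by positive $C_{l,u}$ thanks to the lower bounds for $\tilde n_2,\tilde n_3,\tilde n_4,\tilde T$ proved in Section~4 (Lemma~\ref{lemma48} and the corollaries and lemma that follow it). Consequently $|\nabla_pF_p(G(p))|\le C_{l,u}$ on a neighbourhood of this product of intervals, so $|\nabla G|\le C_{l,u}$ and hence $|\tilde n_{f,1}-\tilde n_{g,1}|\le C_{l,u}|p_f-p_g|$. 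It then suffices to note $|p_f-p_g|\le C_{l,u}\sum_j\sup_{x\in[0,1]}||f_j-g_j||_{L_2^1}$: the $\bold n$-- and $\bold T$--blocks by Lemma~\ref{lemma45}, the $\boldsymbol{\tilde\nu}$--block by Step~1, and the $\bold V$--block from $\bigl||U_f^{(i)}-\tilde U_f|^2-|U_g^{(i)}-\tilde U_g|^2\bigr|\le C_{l,u}\bigl(|U_f^{(i)}-U_g^{(i)}|+|\tilde U_f-\tilde U_g|\bigr)$ together with Lemma~\ref{lemma45} and Step~1. The real work lies in converting the compactness and positivity information of Section~4---via the identification of the denominators of $F_p$ at its root with $\tilde\nu_j\tilde n_j$ and $\tfrac{3}{2}k(\sum_i\tilde\nu_in^{(i)})\tilde T$---into a uniform Lipschitz modulus for the implicitly defined $\tilde n_1$.

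\emph{Step~3} ($\tilde n_2,\tilde n_3,\tilde n_4$ and $\tilde T$) is then routine. For $i=2,3,4$, \eqref{B-2} reads $\tilde n_i=n^{(i)}+\lambda_i\tfrac{\tilde\nu_1}{\tilde\nu_i}(\tilde n_1-n^{(1)})$, and since $\tilde\nu_i\ge C_{l,u}>0$ with everything else uniformly bounded, the bound follows from Lemma~\ref{lemma45}, Step~1 and Step~2 by the same telescoping. Finally $\tilde T$ in \eqref{Temp} is a rational function of $\tilde\nu_i,n^{(i)},U^{(i)},\tilde U,T^{(i)}$ and $\tilde n_1$ whose denominator $\tfrac{3}{2}k\sum_i\tilde\nu_in^{(i)}$ is bounded below by a positive $C_{l,u}$, so splitting the numerator and denominator differences term by term and invoking Lemma~\ref{lemma45}, Step~1 and Step~2 yields $|\tilde T_f-\tilde T_g|\le C_{l,u}\sum_j\sup_{x\in[0,1]}||f_j-g_j||_{L_2^1}$, which completes the proof.
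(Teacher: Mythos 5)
Your proposal is correct and follows essentially the same route as the paper: the key step is identifying $\tilde n_{f,1}=G(\bold n_f,\bold n_f,\boldsymbol{\tilde\nu}_f,\boldsymbol{\tilde\nu}_f,\bold T_f,\bold V_f)$ with the implicitly defined function $G$ from Lemma \ref{lemma48}, bounding $\nabla G$ on the compact parameter set fixed by Section 4 (the paper invokes the implicit function theorem plus compactness where you make the bound explicit via $\partial_z F\ge 1/z$ and the identification of the denominators with $\tilde\nu_j\tilde n_j$ and $\tfrac{3}{2}k(\sum_i\tilde\nu_in^{(i)})\tilde T$), and then applying the mean value theorem together with Lemma \ref{lemma45}. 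Your Steps 1 and 3 simply supply the details for $\tilde\nu_i$, $\tilde U$, $\tilde n_{2,3,4}$ and $\tilde T$ that the paper declares "essentially the same" and omits.
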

	\begin{proof}
	We recall from the proof of (4.8) that $\tilde n_{f,1}=G(\bold n_f,\bold n_f,\boldsymbol{\tilde\nu_f},\boldsymbol{\tilde\nu_f},\bold T_f,\bold V_f)$ and $\tilde n_{g,1}=G(\bold n_g,\bold n_g,\boldsymbol{\tilde\nu_g},\boldsymbol{\tilde\nu_g},\bold T_g,\bold V_g)$, and for each $(\bold x, \bold y, \boldsymbol\mu,\boldsymbol\eta,\boldsymbol\alpha,\boldsymbol\beta)\in (\mathbb{R}_+)^{20}\times\mathbb{R}^4$ we have
		\begin{align*}
		\frac{d}{dz}&F_{\bold x, \bold y, \boldsymbol\mu,\boldsymbol\eta,\boldsymbol\alpha,\boldsymbol\beta}(z)
		=  \frac{1}{z}+\frac{\mu_1}{\mu_2x_2+\mu_1z-\eta_1y_1}+\frac{\mu_1}{\eta_3y_3-\mu_1z+\eta_1y_1}\\
		&+\frac{\mu_1}{\eta_4y_4-\mu_1z+\eta_1y_1}+\frac{\frac{3}{2}(\Delta E)^2\mu_1\sum\limits_{i=1}^4 \eta_i y_i}{\bigg[\sum\limits_{i=1}^4\mu_i x_i\Big(\frac{1}{2}m_i(\beta_i^2)+\frac{3}{2}k\alpha_i\Big)+\Delta E(\mu_1z-\eta_1y_1)\bigg]^2}>0.
		\end{align*}
By Implicit function theorem, $G$ is continuously differentiable at all $(\bold x, \bold y, \boldsymbol\mu,\boldsymbol\eta,\boldsymbol\alpha,\boldsymbol\beta)\in(\mathbb{R}_+)^{20}\times\mathbb{R}^4$. Hence the gradient of $G$ is bounded on the following compact set:
\[
K:=[a_l,a_u]^8\times[\nu_m,\nu_M]^8\times [T_l,T_u]^4\times [-R,R]^4.
\] 
Therefore, by the mean value theorem, we get 
\begin{align*}
|\tilde n_{f,1}-\tilde n_{g,1}|&\leq \max_{(\bold x, \bold y, \boldsymbol\mu,\boldsymbol\eta,\boldsymbol\alpha,\boldsymbol\beta)\in K}|\nabla G(\bold x, \bold y, \boldsymbol\mu,\boldsymbol\eta,\boldsymbol\alpha,\boldsymbol\beta)|\\
&\times|(\bold n_f,\bold n_f,\boldsymbol{\tilde\nu_f},\boldsymbol{\tilde\nu_f},\bold T_f,\bold V_f)-(\bold n_g,\bold n_g,\boldsymbol{\tilde\nu_g},\boldsymbol{\tilde\nu_g},\bold T_g,\bold V_g)|.
\end{align*}
 These, together with Lemma \ref{lemma45} imply $|\tilde n_{f,1}-\tilde n_{f,2}|\leq C_{l,u}\sum_{j=1}^{4}\sup\limits_{x\in[0,1]}||f_j-g_j||_{L_2^1}.$
 The proof for the other estimates are, albeit more tedious, essentially same. We omit it.
\end{proof}

\begin{lemma}\label{estimate8} Let $f=(f_1,f_2,f_3,f_4)\in\Omega$ and $g\in(g_1,g_2,g_3,g_4)\in\Omega$. Then, we have
\begin{align*}
|\mathcal{M}(f_i)-\mathcal{M}(g_i)|\leq C_{l,u}d(f,g),
\end{align*}
and
\begin{align*}
|\widetilde{\mathcal{M}}(f_i)-\widetilde{\mathcal{M}}(g_i)|\leq C_{l,u}d(f,g).
\end{align*}
\begin{proof} Since the proof is identical, we only consider the first estimate.  We denote $\mathcal{M}(f_i):=\mathcal{M}(m_i,n_{f,i},U_{f,i},T_{f,i})$, $\mathcal{M}(g_i):=\mathcal{M}(m_i,n_{g,i},U_{g,i},T_{g,i})$
and apply  Taylor expansion to write $\mathcal{M}(f_i)-\mathcal{M}(g_i)$ as
\begin{align*}
\mathcal{M}(f_i)-\mathcal{M}(g_i)=&(n_{f,i}-n_{g,i})\int_0^1\frac{\partial \mathcal{M}(\theta)}{\partial n}d\theta\\
&+(U_{f,i}-U_{g,i})\int_0^1\frac{\partial \mathcal{M}(\theta)}{\partial U}d\theta\\
&+(T_{f,i}-T_{g,i})\int_0^1\frac{\partial \mathcal{M}(\theta)}{\partial T}d\theta\\
&=:A+B+C,
\end{align*}
where
\begin{align*}
\frac{\partial \mathcal{M}(\theta)}{\partial X} = \frac{\partial \mathcal{M}(\theta)}{\partial X}(m_i,n_\theta,U_\theta,T_\theta)
\end{align*}
for $(n_\theta,U_\theta,T_\theta)=(1-\theta)(n_{f,i},U_{f,i},T_{f,i})+\theta(n_{g,i},U_{g,i},T_{g,i}).$
For $A$, we observe
\begin{align*}
\frac{\partial \mathcal{M}(\theta)}{\partial n}=\frac{1}{n_\theta}\mathcal{M}(\theta),
\end{align*}
so that
\begin{align*}
\bigg|\frac{\partial \mathcal{M}(\theta)}{\partial\rho}\bigg|\leq C_{l,u}e^{-C_{l,u}|v|^2},
\end{align*}
from Lemma \ref{estimate4}.
\\

For $B$, we similarly  observe
\begin{align*}
\frac{\partial \mathcal{M}(\theta)}{\partial U}=\frac{m_i(v-U_\theta)}{kT_\theta}\mathcal{M}(\theta),
\end{align*}
which implies
\begin{align*}
\bigg|\frac{\partial \mathcal{M}(\theta)}{\partial U}\bigg|&\leq C_{l,u}(1+|v|)\mathcal{M}(\theta)\\
&\leq C_{l,u}e^{-C_{l,u}|v|^2},
\end{align*}
by Lemma \ref{estimate4} and Lemma \ref{ub}.
\\

Finally, we compute the derivative w.r.t $T$ as 
\begin{align*}
\frac{\partial \mathcal{M}(\theta)}{\partial T}=\bigg\{-\frac{3}{2T_\theta}+\frac{m_i|v-U_\theta|^2}{2kT_\theta^2}\bigg\}\mathcal{M}(\theta),
\end{align*}
and apply Lemma \ref{estimate4}, Lemma \ref{ub} and Lemma \ref{lb} to get
\begin{align*}
\bigg|\frac{\partial \mathcal{M}(\theta)}{\partial T}\bigg|\leq C_{l,u}(1+|v|^2)e^{-C_{l,u}|v|^2}\leq
C_{l,u}e^{-C_{l,u}|v|^2}.
\end{align*}
Combining all these estimates, we obtain
\begin{align*}
|\mathcal{M}(f_i)-\mathcal{M}(g_i)|\leq C_{l,u}\bigg\{|n_{f,i}-n_{g,i}|+|U_{f,i}-U_{g,i}|+|T_{f,i}-T_{g,i}|\bigg\}e^{-C_{l,u}|v|^2}.
\end{align*}
This, together with Lemma \ref{lemma45}, gives the desired result.
\end{proof}
\end{lemma}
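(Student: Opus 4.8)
The plan is to treat $\mathcal{M}(f_i)$ as a smooth function of its three reactive equilibrium parameters $(n_{f,i},U_{f,i},T_{f,i})$ and to interpolate linearly between the parameters attached to $f$ and to $g$. Writing $\mathcal{M}(f_i)=\mathcal{M}(m_i,n_{f,i},U_{f,i},T_{f,i})$ and $\mathcal{M}(g_i)=\mathcal{M}(m_i,n_{g,i},U_{g,i},T_{g,i})$, I would set $(n_\theta,U_\theta,T_\theta)=(1-\theta)(n_{f,i},U_{f,i},T_{f,i})+\theta(n_{g,i},U_{g,i},T_{g,i})$ for $\theta\in[0,1]$ and apply the fundamental theorem of calculus to obtain
\[
\mathcal{M}(f_i)-\mathcal{M}(g_i)=(n_{f,i}-n_{g,i})\!\int_0^1\!\partial_n\mathcal{M}(\theta)\,d\theta+(U_{f,i}-U_{g,i})\!\int_0^1\!\partial_U\mathcal{M}(\theta)\,d\theta+(T_{f,i}-T_{g,i})\!\int_0^1\!\partial_T\mathcal{M}(\theta)\,d\theta.
\]
The point of the linear interpolation is that the admissibility constraints used earlier are convex: since $T_{f,i},T_{g,i}$ are bounded below by Lemma \ref{lb} and $n_{f,i},n_{g,i}$ by Lemma \ref{nl}, while $|U_{f,i}|,|U_{g,i}|$ are bounded above by Lemma \ref{ub}, the intermediate triple $(n_\theta,U_\theta,T_\theta)$ stays inside the same ranges for every $\theta$. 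Consequently each intermediate Maxwellian $\mathcal{M}(\theta)$ satisfies the uniform Gaussian bound of Lemma \ref{estimate4}.

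Next I would bound the three partial derivatives pointwise in $v$, uniformly in $\theta$. Differentiating yields $\partial_n\mathcal{M}=n_\theta^{-1}\mathcal{M}(\theta)$, $\partial_U\mathcal{M}=\tfrac{m_i}{kT_\theta}(v-U_\theta)\mathcal{M}(\theta)$, and $\partial_T\mathcal{M}=\bigl(-\tfrac{3}{2T_\theta}+\tfrac{m_i|v-U_\theta|^2}{2kT_\theta^2}\bigr)\mathcal{M}(\theta)$. Using the lower bound on $T_\theta$ (Lemma \ref{lb}), the upper bound on $|U_\theta|$ (Lemma \ref{ub}), and the elementary absorption $|v|^je^{-c|v|^2}\le C_{l,u}e^{-c'|v|^2}$, each derivative is dominated by $C_{l,u}e^{-C_{l,u}|v|^2}$. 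Pulling the constants out of the $\theta$-integrals then gives the intermediate estimate
\[
|\mathcal{M}(f_i)-\mathcal{M}(g_i)|\le C_{l,u}\bigl\{|n_{f,i}-n_{g,i}|+|U_{f,i}-U_{g,i}|+|T_{f,i}-T_{g,i}|\bigr\}e^{-C_{l,u}|v|^2}.
\]

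Finally I would invoke the Lipschitz bounds for the reactive parameters proved in Lemma \ref{lemma455}, namely $|n_{f,i}-n_{g,i}|,|U_{f,i}-U_{g,i}|,|T_{f,i}-T_{g,i}|\le C_{l,u}\sum_{j=1}^4\sup_{x}\|f_j-g_j\|_{L_2^1}=C_{l,u}\,d(f,g)$, which upgrades the display above to the asserted inequality for $\mathcal{M}$. For the fast-reaction Maxwellian $\widetilde{\mathcal{M}}$ the argument is identical, interpolating the parameters $(\tilde n_{f,i},\tilde U_f,\tilde T_f)$ and using the corresponding Lipschitz estimates for the second model together with the bounds of Lemma \ref{lemma48} and its corollaries in place of Lemmas \ref{ub}--\ref{lb}. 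I expect the only genuine point requiring care to be the temperature derivative: the factor $T_\theta^{-2}$ together with the quadratic growth $|v-U_\theta|^2$ forces one to use both the uniform lower bound on $T$ and the Gaussian absorption simultaneously, and it is here that the convexity of the admissible parameter region (guaranteeing $T_\theta$ stays bounded below for all $\theta$) is essential; the remaining differentiations and integrations are routine.
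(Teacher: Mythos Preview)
Your proposal is correct and follows essentially the same argument as the paper: linear interpolation of the parameters, explicit computation of $\partial_n\mathcal{M}$, $\partial_U\mathcal{M}$, $\partial_T\mathcal{M}$, Gaussian absorption via Lemma~\ref{estimate4}, and then the Lipschitz bounds on the reactive parameters. In fact your citation of Lemma~\ref{lemma455} for the final step is more accurate than the paper's reference to Lemma~\ref{lemma45}, since it is the reactive parameters $n_{f,i},U_{f,i},T_{f,i}$ (not the single-component ones) that appear in the intermediate estimate.
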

\begin{proposition} \label{Contraction} Assume $f_{i,L}$ and $f_{i,R}$ satisfy the assumptions in Theorem \ref{main} or \ref{main2}. Let $f=(f_1,f_2,f_3,f_4)$, $g\in(g_1,g_2,g_3, g_4) \in\Omega$. Then there exists a $\alpha\in(0,1)$ such that
\begin{align*}
d(\Phi(f),\Phi(g))\leq \alpha d(f,g).
\end{align*}
and 
\begin{align*}
d(\widetilde\Phi(f),\widetilde\Phi(g))\leq \alpha d(f,g).
\end{align*}
if $\tau$ is taken sufficiently large.
\begin{proof}
The proof is almost identical for both case. We only consider the first estimate. %It is enough to show that for $\beta\in(0,1/4)$
%\begin{align}\label{contraction2}
%\sup\limits_{x\in[0,1]}||\phi(f_i)-\phi(g_i)||_{L_2^1}\leq \beta d(f,g).
%\end{align}
Also, we only compute $|\phi^+(f_i)-\phi^+(g_i)|$ because the argument for  $|\phi^-(f_i)-\phi^-(g_i)|$ is same.\\
Consider
\begin{align*}
\phi^+(f_i)-\phi^+(g_i)&=\bigg\{e^{-\frac{1}{\tau|v_1|}\int_0^x\nu_{f,i}(y)dy}-e^{-\frac{1}{\tau|v_1|}\int_0^x\nu_{g,i}(y)dy}\bigg\}f_{1,L}(v)\\
&+\frac{1}{\tau |v_1|}\bigg(\int_0^xe^{-\frac{1}{\tau |v_1|}\int_y^x\nu_{f,i}(z)dz}\nu_{f,i}(y)\mathcal{M}(f_i)dy\\
&-\int_0^xe^{-\frac{1}{\tau |v_1|}\int_y^x\nu_{g,i}(z)dz}\nu_{g,i}(y)\mathcal{M}(g_1)dy\bigg)\\
&=I+II.
\end{align*}
By the mean value theorem, there exists $0<\theta<1$ such that
\begin{align}\label{I2}
\begin{split}
I=&\bigg\{e^{-\frac{1}{\tau|v_1|}\int_0^x\nu_{f,i}(y)dy}-e^{-\frac{1}{\tau|v_1|}\int_0^x\nu_{f,i}(y)dy}\bigg\}f_{i,L}(v)\\
&=-\frac{1}{\tau|v_1|}e^{-\frac{1}{\tau|v_1|}\int_0^x (1-\theta)\nu_{f,i}(y)+\theta\nu_{g,i}(y)dy}\left(\int_0^x \left\{\nu_{f,i}(y)-\nu_{g,i}(y)\right\} dy\right) f_{i,L}(v).
\end{split}
\end{align}
Therefore, by $n_{f}^{(i)},n_{g}^{(i)}\geq a_l$, and Lemma \ref{lemma45}, we obtain
\begin{align}\label{I3}
\begin{split}
|I|&\leq \frac{1}{\tau |v_1|}\bigg(e^{-\frac{1}{\tau|v_1|}\int_0^x\sum_{j=1}^{4}\nu_{ij}a_ldy}\int_0^x |\nu_{f,i}-\nu_{g,i}|dy\bigg)f_{1,L}(v)\\
&\leq \frac{C_{l,u}}{\tau|v_1|}e^{-\frac{\sum_{j=1}^{4}\nu_{ij}a_l}{\tau|v_1|}}f_{1,L}d(f,g)\\
&\leq \frac{C_{l,u}}{\tau|v_1|}f_{1,L}d(f,g).
\end{split}
\end{align}
We divide the estimate of  $II$ into the following three parts. First, by a similiar way as in the proof for $I$, we estimate the difference of integrating factor as
\begin{align}\label{II2}
\begin{split}
&\frac{1}{\tau |v_1|}\bigg|\int_0^xe^{-\frac{1}{\tau |v_1|}\int_y^x\nu_{f,i}(z)dz}\nu_{f,i}(y)\mathcal{M}(f_i)dy-\int_0^xe^{-\frac{1}{\tau |v_1|}\int_y^x\nu_{g,i}(z)dz}\nu_{f,i}(y)\mathcal{M}(f_i)dy\bigg|\\
&\leq \frac{1}{\tau |v_1|}\int_0^x \frac{1}{\tau|v_1|}e^{-\frac{1}{\tau|v_1|}\int_y^x (1-\theta)\nu_{f,i}(z)+\theta\nu_{g,i}(z)dz}\int_y^x |\nu_{f,i}(z)-\nu_{g,i}(z)| dz\ \nu_{f,i}(y)\mathcal{M}(f_i)dy\\
&\leq \left(\frac{C_{l,u}}{\tau |v_1|}\int_0^x \frac{1}{\tau|v_1|}e^{-\frac{\sum_{j=1}^{4}\nu_{ij}a_l}{\tau|v_1|}}\mathcal{M}(f_i)dy\right) d(f,g)\\
&\leq \left(\frac{C_{l,u}}{\tau |v_1|}\int_0^x e^{-\frac{\sum_{j=1}^{4}\nu_{ij}a_l}{2\tau|v_1|}}\mathcal{M}(f_i)dy\right) d(f,g),
\end{split}
\end{align}
where we used that $xe^{-x}<C$ for some $C>0$. Secondly we use (\ref{SS}) to estimate the difference
of the collision frequency:
\begin{align}\label{II3}
\begin{split}
&\frac{1}{\tau |v_1|}\int_0^xe^{-\frac{1}{\tau |v_1|}\int_y^x\nu_{g,i}(z)dz}|\nu_{f,i}(y)-\nu_{g,i}(y)|\mathcal{M}(f_1)dy\\
&\leq \frac{C_{l,u}}{\tau|v_1|}\int_0^x e^{-\frac{\sum_{j=1}^{4}\nu_{ij}a_l}{\tau|v_1|}(x-y)}\mathcal{M}(f_1)dy\cdot d(f,g).
\end{split}
\end{align}\\
Finally, by Lemma \ref{estimate8}), we estimate the difference of the Maxwellians:
\begin{align}\label{II4}
\begin{split}
&\frac{1}{\tau |v_1|}\int_0^xe^{-\frac{1}{\tau |v_1|}\int_y^x\nu_{g,i}(z)dz}\nu_{g,i}(z)\left\{\mathcal{M}(f_i)-\mathcal{M}(g_i)\right\}dy\\
&\leq \frac{C_{l,u}}{\tau|v_1|}\int_0^x e^{-\frac{\sum_{j=1}^{4}\nu_{ij}a_l}{\tau|v_1|}(x-y)}e^{-C_{l,u}|v|^2}dy\cdot d(f,g).\\
\end{split}
\end{align}\\
Combining (\ref{I3}), (\ref{II2}), (\ref{II3}), and (\ref{II4}), we obtain
\begin{align*}
|\phi^+(f_i)-\phi^+(g_i)|
\leq\ & C_{l,u}\cdot d(f,g)\cdot \bigg(\frac{1}{\tau|v_1|}f_{i,L}+\frac{1}{\tau|v_1|}\int_0^x e^{-\frac{\sum_{j=1}^{4}\nu_{ij}a_l}{\tau|v_1|}(x-y)}\mathcal{M}(f_1)dy\\ 
&+ \frac{1}{\tau|v_1|}\int_0^x e^{-\frac{\sum_{j=1}^{4}\nu_{ij}a_l}{\tau|v_1|}(x-y)}e^{-C_{l,u}|v|^2}dy\bigg).
\end{align*}\\
Therefore,
\begin{align*}
||\phi^+(f_i)-\phi^+(g_i)||_{L_2^1}&\leq C_{l,u}\cdot d(f,g) \cdot \bigg( \int_{v_1>0}\int_0^x \frac{1}{\tau|v_1|}f_{i,L}(1+|v|^2)dydv\\
&+\int_{v_1>0}\int_0^x \frac{1}{\tau|v_1|}e^{-\frac{\sum_{j=1}^{4}\nu_{ij}a_l}{\tau|v_1|}(x-y)}\mathcal{M}(f_i)(1+|v|^2)dydv\\
&+\int_{v_1>0}\int_0^x \frac{1}{\tau|v_1|}e^{-\frac{\sum_{j=1}^{4}\nu_{ij}a_l}{\tau|v_1|}(x-y)}e^{-C_{l,u}|v|^2}(1+|v|^2)dydv\bigg).
\end{align*}
Applying Lemma \ref{estimate4}, we have
\begin{align*}
||\phi^+(f_i)-\phi^+(g_i)||_{L_2^1}&\leq C_{l,u}\cdot d(f,g) \cdot \bigg( \int_{v_1>0}\int_0^x \frac{1}{\tau|v_1|}f_{i,L}(1+|v|^2)dydv\\
&+\int_{v_1>0}\int_0^x \frac{1}{\tau|v_1|}e^{-\frac{\sum_{j=1}^{4}\nu_{ij}a_l}{\tau|v_1|}(x-y)}e^{-C_{l,u}|v|^2}dydv\bigg),
\end{align*}
Then, from the same computation as in the esitmate of $\overline{II}$ in (\ref{computation}), we obtain
\begin{align*}
||\phi^+(f_i)-\phi^+(g_i)||_{L_2^1}&\leq C_{l,u}\bigg[\frac{a_{i,s}+c_{i,s}}{\tau}+\bigg(\frac{\ln{\tau}+1}{\tau}\bigg)\bigg]d(f,g)
\leq C_{l,u}\bigg(\frac{\ln{\tau}+1}{\tau}\bigg)d(f,g).
\end{align*}
By a simliar argument, we have
\begin{align*}
||\phi^-(f_i)-\phi^-(g_i)||_{L_2^1}\leq C_{l,u}\bigg(\frac{\ln{\tau}+1}{\tau}\bigg)d(f,g).
\end{align*}
Hence,
\begin{align*}
||\phi(f_i)-\phi(g_i)||_{L_2^1}\leq C_{l,u}\bigg(\frac{\ln{\tau}+1}{\tau}\bigg)d(f,g).
\end{align*}
Taking supremum on both sides and choosing sufficiently large $\tau>0$, we get the desired result.
\end{proof}
\end{proposition}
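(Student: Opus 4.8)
The plan is to reduce the whole proposition to the single scalar bound
$\sup_{x\in[0,1]}\|\phi(f_i)-\phi(g_i)\|_{L_2^1}\le C_{l,u}\,\frac{\ln\tau+1}{\tau}\,d(f,g)$ for each $i=1,2,3,4$ (and its analogue for $\widetilde\phi$). Summing over $i$ then gives $d(\Phi(f),\Phi(g))\le 4C_{l,u}\frac{\ln\tau+1}{\tau}\,d(f,g)$, so choosing $\tau$ so large that $\alpha:=4C_{l,u}\frac{\ln\tau+1}{\tau}<1$ finishes the argument. Since the $v_1>0$ and $v_1<0$ contributions are treated identically, it suffices to estimate $\phi^+(f_i)-\phi^+(g_i)$; and since $\widetilde\Phi$ has exactly the same mild form with $(\nu_i,\mathcal{M}_i)$ replaced by $(\tilde\nu_i,\widetilde{\mathcal{M}}_i)$, the fast-reaction case needs no separate work once the corresponding Lipschitz estimates are invoked.

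First I would split $\phi^+(f_i)-\phi^+(g_i)=I+II$, where $I$ collects the boundary term $e^{-\frac{1}{\tau|v_1|}\int_0^x\nu_i}f_{i,L}$ carrying the two collision frequencies $\nu_{f,i},\nu_{g,i}$, and $II$ is the difference of the two gain integrals. For $I$, apply the mean value theorem to $t\mapsto e^{-t}$ to produce the factor $\frac{1}{\tau|v_1|}\int_0^x|\nu_{f,i}-\nu_{g,i}|\,dy$, bound the surviving exponential by $1$, and use the uniform Lipschitz bound $|\nu_{f,i}-\nu_{g,i}|\le C_{l,u}d(f,g)$ of Lemma \ref{lemma455} (resp.\ its fast-model counterpart); this gives $|I|\le\frac{C_{l,u}}{\tau|v_1|}f_{i,L}\,d(f,g)$. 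For $II$ I would insert and subtract intermediate terms to isolate three effects: (a) the difference of the damping exponentials with the prefactor $\nu_{f,i}(y)$ and the Maxwellian $\mathcal{M}(f_i)$ frozen — again handled by the mean value theorem together with $xe^{-x}\le C$ to absorb the extra $\int_y^x|\nu_{f,i}-\nu_{g,i}|$; (b) the difference $\nu_{f,i}(y)-\nu_{g,i}(y)$ of the prefactors, handled by Lemma \ref{lemma455}; and (c) the difference $\mathcal{M}(f_i)-\mathcal{M}(g_i)$ of the Maxwellians, handled by Lemma \ref{estimate8}. In every term I bound the damping exponent from below by $\big(\sum_j\nu_{ij}\big)a_l\,(x-y)/(\tau|v_1|)$ and the prefactor $\nu_{f,i}$ from above by $C_{l,u}$ (Lemma \ref{ub}), so that each piece has the shape $\frac{C_{l,u}}{\tau|v_1|}\int_0^x e^{-c(x-y)/(\tau|v_1|)}\big(\mathcal{M}(f_i)\ \text{or}\ e^{-C_{l,u}|v|^2}\big)\,dy\cdot d(f,g)$.

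Collecting $I$ and the three parts of $II$, using Lemma \ref{estimate4} to dominate $\mathcal{M}(f_i)(1+|v|^2)$ by a Gaussian in $v$, and integrating, the $L_2^1$-norm is bounded by $\big(\frac{a_{i,s}+c_{i,s}}{\tau}+\frac{\ln\tau+1}{\tau}\big)d(f,g)$: the boundary piece produces $\frac{a_{i,s}+c_{i,s}}{\tau}$ precisely because the hypotheses of Theorem \ref{main}/\ref{main2} give $f_{i,LR}/|v_1|\in L_2^1$, while the gain piece produces $\frac{\ln\tau+1}{\tau}$ by the very computation already carried out for $\overline{II}$ in the proof of Lemma \ref{prop3} (split the $v_1$-integral over $\{0<v_1<1/\tau\}$, $\{1/\tau<v_1<\tau\}$, $\{v_1>\tau\}$). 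The $\phi^-$ estimate is symmetric, and the fast model is word-for-word the same after citing the second inequality of Lemma \ref{estimate8} and the Lipschitz bounds for $\tilde n_i,\tilde U,\tilde T,\tilde\nu_i$.

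The main obstacle is not any individual estimate but the insistence that every constant be a \emph{genuine} $C_{l,u}$, that is, independent both of the particular $f,g\in\Omega$ and of $\tau$. This is exactly what the a priori analysis of Section 4 buys: the uniform two-sided bounds on the reactive coefficients $n_i,U_i,T_i$ (resp.\ $\tilde n_i,\tilde U,\tilde T$), their uniform Lipschitz dependence on $f$ (Lemma \ref{lemma455} and its fast-model analogue, the latter obtained from the implicit-function-theorem description $\tilde n_1=G(\cdots)$ with $G$ of class $C^1$ on the relevant compact parameter set), and — crucially for Lemma \ref{estimate8} — the uniform lower bound on the temperatures from Lemma \ref{lb}, which is what makes $\mathcal{M}(f_i)$ Gaussian-dominated uniformly in $f$. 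Granting those, the remaining work is purely the bookkeeping of the decomposition together with the one single-variable integral estimate that yields the decaying factor $\frac{\ln\tau+1}{\tau}$; taking $\tau$ large then delivers a single $\alpha\in(0,1)$ valid simultaneously for $\Phi$ and $\widetilde\Phi$.
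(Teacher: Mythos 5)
Your proposal is correct and follows essentially the same route as the paper: the same decomposition of $\phi^+(f_i)-\phi^+(g_i)$ into the boundary term $I$ and the gain term $II$, the same three-way splitting of $II$ (integrating factor, collision frequency, Maxwellian differences), the same invocation of the Lipschitz lemmas and the Gaussian domination of Lemma \ref{estimate4}, and the same reuse of the $\overline{II}$ computation from Lemma \ref{prop3} to extract the factor $\frac{\ln\tau+1}{\tau}$. No gaps; this matches the paper's argument.
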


{\bf Acknowledgement}
Doheon Kim was supported by a KIAS Individual Grant (MG073901) at Korea Institute for Advanced Study. The work of Seok-Bae Yun was supported by Samsung Science and Technology Foundation under
Project Number SSTF-BA1801-02.
\appendix

\end{document}